\documentclass[aos]{imsart}

\RequirePackage{amsthm,amsmath,amsfonts,amssymb,float,mathrsfs,subfigure}
\RequirePackage[numbers,sort&compress]{natbib}
\RequirePackage[colorlinks,citecolor=blue,urlcolor=blue]{hyperref}
\RequirePackage{graphicx}

\startlocaldefs
\theoremstyle{plain}

\newtheorem{theorem}{Theorem}[section]
\newtheorem{lemma}[theorem]{Lemma}
\newtheorem{corollary}[theorem]{Corollary}
\theoremstyle{definition}

\newtheorem{assumption}[theorem]{Assumption}
\newtheorem{remark}[theorem]{Remark}

\endlocaldefs

\newcommand{\bbe}{\mathbb{E}}

\newcommand{\Tr}{\operatorname{Tr}}

\begin{document}

\begin{frontmatter}
\title{Phase Transition of Spectral Fluctuations in Large Gram Matrices with a Variance Profile: A Unified Framework for Sparse CLTs}
\runtitle{Phase Transition of Spectral Fluctuations in Sparse  Gram Matrices}

\begin{aug}
\author[A]{\fnms{Rui}~\snm{Wang} \ead[label=e1]{wangrui\_math@stu.xjtu.edu.cn}},
\author[B]{\fnms{Guangming}~\snm{Pan}\ead[label=e2]{gmpan@ntu.edu.sg}}
\and
\author[A]{\fnms{Dandan}~\snm{Jiang}\ead[label=e3]{jiangdd@mail.xjtu.edu.cn}}\thanks{Corresponding author.}
\address[A]{School of Mathematics and Statistics, Xi’an Jiaotong University \printead[presep={ ,\ }]{e1,e3} }

\address[B]{Division of Mathematical Sciences, Nanyang Technological University \printead[presep={ ,\ }]{e2}}
\end{aug}

\begin{abstract}
We study the asymptotic spectral behavior of high-dimensional random Gram matrices with sparsity and a  variance profile, motivated by applications in wireless communications. Specifically, we consider the Gram matrices $\mathbf S_n=\mathbf Y_n\mathbf Y_n^*$, where the entries of $\mathbf Y_n$ are independent, centered, heteroscedastic, and sparse through Bernoulli masking. The sparsity level is parameterized as $s=q^2/n$, where $q$ ranges from polynomial order up to order $n^{1/2}$. 

We investigate two asymptotic regimes: a moderate-sparsity regime with fixed $s\in(0,1]$, and a high-sparsity regime where $s\to0$. In both regimes, we establish the convergence of the empirical spectral distribution of $\mathbf S_n$ to a deterministic limit, and further derive central limit theorems for linear spectral statistics using resolvent techniques and martingale difference arguments.
Our analysis reveals a phase transition in the fluctuation behavior across the two regimes.  In  the high-sparsity regime, the asymptotic fluctuations are entirely governed by fourth-moment effects, with sparsity-scaled contributions being suppressed. Moreover, the leading deterministic term and the variance of the linear spectral statistic scale at different rates in  $q$, causing the standard centering to fail and necessitating an explicit correction to recover a valid CLT.
The results apply to both Gaussian and non-Gaussian entries and are illustrated through applications to hypothesis testing and outage probability analysis in large-scale MIMO systems.

\end{abstract}

\begin{keyword}[class=MSC]
\kwd[Primary ]{62B10}
\kwd{60F05}
\kwd[; secondary ]{62D10}
\end{keyword}

\begin{keyword}
\kwd{sparse matrices}
\kwd{phase transition}
\kwd{limiting spectral distribution}
\kwd{central limit theorem}
\kwd{random matrix theory}
\end{keyword}

\end{frontmatter}

\section{Introduction}

We consider a $p \times n$ random matrix $\mathbf{Y}_n = (y_{ij})$ with entries 
\begin{equation}\label{eq:model}
	y_{ij} = \frac{1}{\sqrt{ns}}\, b_{ij} w_{ij},
\end{equation}
where $\{b_{ij}\}$ are independent $\rm Bernoulli(1,s)$ random variables, representing random sparsity with retention probability $s = q^2/n$, and $\{w_{ij}\}$ are independent mean-zero random variables with heterogeneous variances $\mathbb{E}|w_{ij}|^2 = \sigma_{ij}^2$. The corresponding Gram matrix is $\mathbf{S} = \mathbf{Y}_n \mathbf{Y}_n^{*}$. This model captures two key features frequently encountered in modern high-dimensional data: structural sparsity and variance heterogeneity.

A primary motivation arises from wireless communications. In multiple-input multiple-output (MIMO) channels, $\mathbf{Y}_n$ may represent the channel matrix, where $b_{ij}$ models random link availability and $\sigma_{ij}^2$ characterizes non-uniform propagation effects. The associated Gram matrix $\mathbf{S}$ determines essential performance metrics, such as the mutual information $C(\sigma^2) = \log \det (1 + \mathbf{S}/\sigma^2)$ with a given parameter $\sigma^2> 0$,
and its fluctuation governs the outage probability—the probability that $C(\sigma^2)$ falls below a target transmission rate. The study of linear spectral statistics (LSS) for $\mathbf{S}$ provides a natural probabilistic framework for quantifying these fluctuations; see, for instance, \cite{Hachem2008,hachem2008-1, bao2015, hu2019, zhang2024, zhang2025}. Consequently, central limit theorems (CLTs) for LSS of Gram-type matrices have become indispensable tools in the stochastic analysis of mutual-information variations.

Beyond capacity analysis, similar ideas facilitate statistical inference on large-scale fading structures across heterogeneous environments. In sparse MIMO systems, the channel gain can be modeled as $y_{ij}=b_{ij}l_{ij}g_{ij}/\sqrt{ns}$, where $\{l_{ij}\}$ represent large-scale fading coefficients, $\{g_{ij}\}$ are  independent and identically distributed (i.i.d.)  small-scale fading. 
 The matrix $\mathbf{L}= (l_{ij})$ varies slowly with the macroscopic environment, and testing whether two scenarios share the same large-scale fading pattern offers an  approach to detecting environmental or topological changes \cite{TMC2018, WCNC2020, TWC2023}. 
In this context, CLTs for LSS of sparse and heteroscedastic Gram matrices serve as a unified framework linking random matrix theory with practical inference tasks in high-dimensional sparse MIMO systems.

The present work stands within a rich tradition of random matrix theory, which provides the foundational framework for analyzing high-dimensional statistical models. Early developments established CLTs for eigenvalue statistics of Wigner ensembles \cite{Sinai1998, BY2005, Lytova2009} and Gaussian sample covariance matrices \cite{jonsson1982, bai2004}, later extended to non-Gaussian data \cite{Pan2008} and, for the specific case of the logarithm function, to variance-profile Gram matrices \cite{Hachem2008,hachem2012clt}. Subsequent work covered Fisher matrices \cite{Zheng2012, Zheng2015} and various high-dimensional models, such as separable covariance models \cite{bai2019,li2024}. More recently, attention has shifted to sparse structures: CLTs and functional limit theorems have been derived for different sparse regimes \cite{shcherbina2010, shcherbina2012, Dumitriu2013, Dumitriu2023, Kanazawa2024, Zhu2024}, with further implications for graph-based learning \cite{Keriven2020}.
Parallel developments have addressed sample covariance matrices under missing data \cite{Jurczak2017,Jong2019, li2024spectral}, albeit under modeling frameworks distinct from ours.
However, existing results primarily concern adjacency-type matrices and sample covariance matrices, and rarely accommodate both sparsity and variance heterogeneity simultaneously. CLTs for sparse and heteroscedastic Gram‑type matrices, despite their importance for the MIMO‑related problems described earlier, have not been systematically studied. This serves as the primary motivation for the present paper.

In this paper, we investigate spectral fluctuations  of large sparse Gram matrices with a given  variance profile.  Our main focus lies in understanding the role of the sparsity parameter $s$ in shaping the asymptotic fluctuations of the LSS. We consider two distinct asymptotic regimes: 
\begin{enumerate}
	\item[(1)] the \emph{moderate-sparsity regime}, where $s$ is a constant in $(0,1]$, and 
	\item[(2)] the \emph{high-sparsity regime}, where $s = q^2/n \to 0$ with $n^{\phi} \leq q < n^{1/2}$ for some fixed $\phi > 0$.
\end{enumerate}
These two regimes jointly characterize the transition from moderately sparse to extremely sparse structures. 
For both regimes, we first establish the convergence of the empirical spectral distribution (ESD) of $\mathbf S_n$ to a nonrandom limiting spectral distribution (LSD). Building on this foundation, we then derive CLTs for the LSS using a martingale difference approach. The mean and variance terms are determined through the unique solutions of certain systems of linear functional equations. When proving the CLT, additional care is required: the analysis necessitates a fourth-order Lindeberg condition, which holds under the existence of moments of order $8+\epsilon$ for some small $\epsilon>0$. 

The contributions of this paper can be summarized as follows. First,  we identify a clear phase transition in the fluctuation behavior between the two sparsity regimes. While the high-sparsity CLT can be viewed as a limiting case of the moderate-sparsity regime as $s \to 0$, the fluctuation structure changes qualitatively: certain components vanish, and the limiting distribution is determined solely by the remaining terms. 
In particular, in the high-sparsity regime, the limiting Gaussian fluctuations are dominantly driven  by fourth-order moment effects, while contributions arising from sparsity-dependent terms vanish asymptotically.
Second, we uncover a mismatch in convergence rates for the mean and variance terms of the centered LSS. After normalization by $\sqrt{p}q$, the variance remains of order one, whereas the corresponding mean term grows at rate $\sqrt{p}/q$, diverging in the high-sparsity regime. While this effect is asymptotically negligible under moderate sparsity, it invalidates the CLT when $q$ is too small.
To address this issue, we introduce a centering correction by subtracting the diverging mean term, thereby obtaining a valid limiting distribution. Third, we develop a unified and general framework applicable to both  Gaussian and non-Gaussian entries, and extending beyond the logarithmic function to a broad class of analytic test functions.

To illustrate the practical relevance of our theoretical results, we present two applications in wireless communication theory.  The first concerns the equality test of two large-scale fading matrices in sparse MIMO systems. The second application involves outage probability analysis in sparse MIMO channels, where the mutual information is expressed through the derived spectral distribution.
These examples demonstrate how the developed random matrix theory connects asymptotic spectral analysis with practical inference and performance evaluation in high-dimensional sparse systems.

The remainder of the paper is organized as follows. Section~2 introduces the model formulation and preliminary results. Section~3 presents the main theorems for LSS of heterogeneous-variance Gram matrices under both sparsity regimes. Technical proofs are deferred to Appendix, with supplementary details provided in the supplementary material. Section~4 discusses the two applications in detail: equality testing of large-scale fading matrices and outage probability analysis in sparse MIMO systems. Section 5 provides numerical simulations to validate the applications discussed in the preceding sections.

\section{The model and preliminaries}

This section introduces the random matrix model and collects several preliminary results that will be used in the proofs of the main theorems. We first specify the model assumptions and notations, followed by key lemmas concerning the  moment bounds.
\subsection{The model}
Let $p=p(n)$ be a sequence of integers such that $\lim _{n \rightarrow \infty} {p}/{n}=c \in(0, \infty)$. 
Consider a $p\times n$ real- or complex-valued random matrix
$\mathbf Y_n=(y_{ij})$ with entries
\begin{equation}\label{model}
	y_{ij}=\frac{1}{\sqrt{ns}}\,b_{ij}w_{ij},
\end{equation}
where the random variables $(b_{ij})$ and $(w_{ij})$ satisfy the following assumptions.

\begin{assumption}\label{ass1}
	Assume that  $(w_{i j}; 1 \leq i \leq p, 1 \leq j \leq n)$  are real- or complex-valued independently  distributed random variables,  satisfying the moment conditions
	$$
	\bbe\left(w_{i j}\right)=0, \quad \mathbb{E}|w_{i j}|^2=\sigma_{ij}^2,~ \text{and}~ \mathbb{E}w_{i j}^2=0 ~ \text{in the complex case},
	$$
	where $\bbe$ denotes expectation. Moreover, there exists some small $\epsilon>0$ such that
	$$
	\sup_{i,j}\bbe \left|w_{i j}\right|^{8+\epsilon}<\infty.
	$$  
\end{assumption}

For a random variable $Z$, write $\operatorname{var}(Z)$ for its variance. Since $\operatorname{var}(y_{ij}) = \sigma_{ij}^{2}/n$, we refer to the collection $(\sigma_{ij}^{2})$ as the variance profile of the model.

\begin{assumption}\label{ass2}
	There exists a finite positive real number $\sigma_{\max }$ such that the family of real numbers ($\sigma_{i j}(n), 1 \leq i \leq p, 1 \leq j \leq n, n \geq 1$) satisfies 
	$$
	\sup _{n \geq 1} \max _{\substack{1 \leq i \leq p \\ 1 \leq j \leq n}}\left|\sigma_{i j}(n)\right| \leq \sigma_{\max }.
	$$
\end{assumption}
\begin{assumption}\label{ass3}
	Furthermore, there exists a real number $\sigma_{\text {min }}^2>0$ such that
	$$
	\liminf _{n \geq 1} \min _{1 \leq j \leq n} \frac{1}{n} \sum_{i=1}^p \sigma_{i j}^2(n) \geq \sigma_{\min }^2.
	$$
\end{assumption}
The lower bound in Assumption~\ref{ass3} ensures mild non-degeneracy of the column-wise average variances. Although a nontrivial LSD can be obtained under weaker conditions \cite{Hachem2007}, this assumption is crucial for stable resolvent estimates and is commonly imposed in CLTs \cite{Hachem2008}. 

\begin{assumption}\label{ass4}
	Assume that $(b_{ij}; 1 \leq i \leq p, 1 \leq j \leq n)$ are  i.i.d. Bernoulli random variables with parameter $s$,  independent of $(w_{ij})$. The sparsity parameter $q$ is introduced through $s=q^2/n$ with $n^{\phi} \leq q \leq n^{1/2}$ for some fixed $\phi > 0$. 
\end{assumption}

Define the matrix $\mathbf X_n$ with the  entries 
$x_{ij}={b_{ij}w_{ij}}/{\sqrt{s}}$. 
Writing $\mathbf Y_n=(\mathbf y_1,\ldots,\mathbf y_n)$ and
$\mathbf X_n=(\mathbf x_1,\ldots,\mathbf x_n)$ with
$\mathbf y_j=\mathbf x_j/{\sqrt n}$, the
Gram matrix can be expressed as
$$
\mathbf S_n=\mathbf Y_n\mathbf Y_n^*=\frac{1}{n}\mathbf X_n\mathbf X_n^*=\frac{1}{ns}(\mathbb B \circ  \mathbf W_n)(\mathbb B \circ \mathbf W_n)^*,
$$
where $\circ$ denotes the Hadamard (element-wise) product,
$\mathbb B=(b_{ij})$, and $\mathbf W_n=(w_{ij})$.
Equivalently,
$$
\mathbf S_n=\sum_{j=1}^n \mathbf y_j\mathbf y_j^*=\frac{1}{n}\sum_{j=1}^n \mathbf x_j\mathbf x_j^*=\frac{1}{ns}\sum_{j=1}^n \mathbb B_j\mathbf w_j\mathbf w_j^*\mathbb B_j,
$$
where $ \mathbb B_j=\operatorname{diag}\left(b_{i j}, 1 \leq i \leq p\right)$ and $\mathbf w_j=(w_{1j},\cdots,w_{pj}) \in \mathbb R^p$.

\subsection{Notations}

Throughout this paper, $\mathbf{i}$ denotes the imaginary unit $\sqrt{-1}$, and for any complex number $z \in \mathbb{C}$, $\Im(z)$ stands for its imaginary part. We write $\mathbb{R}^{+} = \{x \in \mathbb{R}: x \ge 0\}$ and $\mathbb{C}^{+} = \{z \in \mathbb{C}: \Im(z) > 0\}$. The indicator function for any event $\mathcal{A}$ is denoted by $I(\mathcal{A})$. For convergence, $\xrightarrow{\mathcal{D}}$ refers to convergence in distribution and $\xrightarrow{\mathcal{P}}$ to convergence in probability.

For matrices, $\operatorname{diag}(a_i; 1 \le i \le k)$ denotes the $k \times k$ diagonal matrix with entries $a_1, \dots, a_k$. For a matrix $\mathbf{A}$, its $(i,j)$-th entry is denoted by either $a_{ij}$ or $[\mathbf{A}]_{ij}$, depending on context. We write $\mathbf{A}^T$ for the transpose, $\mathbf{A}^*$ for the conjugate transpose, $\operatorname{Tr}(\mathbf{A})$ for the trace, and $\det(\mathbf{A})$ for the determinant when $\mathbf{A}$ is square. The ESD of $\mathbf{A}\mathbf{A}^*$ is defined by
\[
F^{\mathbf{A}\mathbf{A}^*}(x) = \frac{1}{p}\sum_{i=1}^{p} I(\lambda_i(\mathbf{A}\mathbf{A}^*) \le x),
\]
where $\lambda_i(\mathbf{A}\mathbf{A}^*)$ are the eigenvalues of the $p\times p$ matrix $\mathbf{A}\mathbf{A}^*$.

For a complex-valued random variable $Z$ and $\ell > 0$, denote its $\ell$-norm by $\|Z\|_\ell = (\mathbb{E}|Z|^\ell)^{1/\ell}$.
For a vector $\mathbf{v}$, $\|\mathbf{v}\|$ denotes the Euclidean norm and $\|\mathbf{v}\|_{\infty}$ the maximum norm. For a matrix $\mathbf{A}$, $\|\mathbf{A}\|$ denotes the spectral norm, $\|\mathbf{A}\|_{\infty} = \max_{1 \le i \le p} \sum_{j=1}^p |a_{ij}|$ denotes the maximum row-sum norm, and $\rho(\mathbf{A})$ denotes the spectral radius. When no confusion arises, subscripts and superscripts $n$ are omitted for simplicity, and $K$ denotes a generic positive constant whose value may change from line to line.


\subsection{Preliminary results}

We begin by defining the following diagonal matrices, which will be used throughout the paper:
\[ \mathbf \Sigma_{j}  =\operatorname{diag}\left(\sigma^2_{i j}, 1 \leq i \leq p\right),\ \mathbf {\widetilde{\Sigma}}_{i}=\operatorname{diag}\left(\sigma^2_{i j}, 1 \leq j \leq n\right).\]

The following lemma provides a foundational moment identity for quadratic forms, which is crucial for subsequent variance and covariance calculations in our CLT derivation. 
Notably, the sparsity parameter $s$ appears only in the fourth-moment term, reflecting its exclusive role in governing the tail behavior rather than the second-order structure.

\begin{lemma}\label{split}
	Assuming $\mathbf{x}_j=\mathbb B_j\mathbf w_j/\sqrt{s}$ follows our model, we consider any $p \times p$ nonrandom symmetric matrices $\mathbf{A}$ and  $\mathbf{B}$. In this context, we establish the following result:
	\begin{align*}
		&\mathbb{E}[(\mathbf{x}_j^*\mathbf{A}\mathbf{x}_j-\operatorname{Tr}\mathbf{A}\mathbf \Sigma_j)(\mathbf{x}_j^*\mathbf{B}\mathbf{x}_j-\operatorname{Tr}\mathbf{B}\mathbf \Sigma_j)]\\
		=&(\kappa+1)\operatorname{Tr}\mathbf{A} \mathbf \Sigma_j\mathbf{B} \mathbf \Sigma_j+(\frac{\tilde \nu_4}{s}-\kappa-2)\operatorname{Tr}\left[\mathbf A\circ \mathbf B \circ\mathbf \Sigma_j^2\right],
	\end{align*}
	where $\kappa=1$ for the real case and 0 for the complex case, and $\tilde{\nu}_4 = \mathbb{E}\!\left[\left|{w_{ij}}/{\sigma_{ij}}\right|^4\right]$
is the standardized fourth moment of the entries $\{w_{ij}\}$.
\end{lemma}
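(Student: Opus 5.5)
The plan is a direct moment computation: expand both quadratic forms in coordinates, compute the covariance entry by entry using independence, then regroup the surviving index patterns into traces. Write $x_{ij}=b_{ij}w_{ij}/\sqrt{s}$ and drop the column index $j$ where harmless. We first record the low-order moments of $x_i:=x_{ij}$. Since $b_{ij}^2=b_{ij}$ and $b_{ij}$ is independent of $w_{ij}$,
\[
\mathbb E x_i=0,\qquad \mathbb E|x_i|^2=\sigma_{ij}^2,\qquad \mathbb E x_i^2=0 \text{ (complex case)},\qquad \mathbb E|x_i|^4=\frac{s}{s^2}\tilde\nu_4\sigma_{ij}^4=\frac{\tilde\nu_4}{s}\sigma_{ij}^4 .
\]
In the real case we also have $\mathbb E x_i^2=\sigma_{ij}^2$. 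Hence $\mathbf x_j$ has independent centered coordinates with covariance $\mathbf\Sigma_j$, and the sparsity enters only through the fourth moment. This already explains why $s$ appears only in the diagonal correction term.

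Next we expand $\mathbf x^*\mathbf A\mathbf x-\operatorname{Tr}\mathbf A\mathbf\Sigma_j=\sum_{k,l}a_{kl}\bar x_k x_l-\sum_k a_{kk}\sigma_{kj}^2$, and likewise for $\mathbf B$. The covariance then equals
\[
\sum_{k,l,k',l'}a_{kl}b_{k'l'}\,\mathbb E[\bar x_k x_l\bar x_{k'}x_{l'}]-\operatorname{Tr}(\mathbf A\mathbf\Sigma_j)\operatorname{Tr}(\mathbf B\mathbf\Sigma_j).
\]
By independence and centering, a fourth-order moment survives only when the indices pair up. We sort the terms by index pattern.
\begin{enumerate}
\item[(a)] $k=l\neq k'=l'$: this contributes $\sum_{k\ne k'}a_{kk}b_{k'k'}\sigma_k^2\sigma_{k'}^2$. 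Once the product of traces is subtracted, it leaves $-\sum_k a_{kk}b_{kk}\sigma_k^4$.
\item[(b)] $k=l'\neq l=k'$: this gives $\sum_{k\ne l}a_{kl}b_{lk}\sigma_k^2\sigma_l^2$, which is the off-diagonal part of $\operatorname{Tr}\mathbf A\mathbf\Sigma_j\mathbf B\mathbf\Sigma_j$.
\item[(c)] $k=k'\neq l=l'$: this requires $\mathbb E\bar x_k^2\,\mathbb E x_l^2$, so it vanishes in the complex case. In the real case it gives $\sum_{k\ne l}a_{kl}b_{kl}\sigma_k^2\sigma_l^2$, and symmetry of $\mathbf B$ turns this into the off-diagonal part of $\operatorname{Tr}\mathbf A\mathbf\Sigma_j\mathbf B\mathbf\Sigma_j$ again. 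Together with (b), this accounts for the factor $(\kappa+1)$.
\item[(d)] all four indices equal: this gives $\frac{\tilde\nu_4}{s}\sum_k a_{kk}b_{kk}\sigma_k^4$.
\end{enumerate}

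Finally we complete the off-diagonal sums in (b) and (c) to full traces by adding and subtracting the diagonal terms $(\kappa+1)\sum_k a_{kk}b_{kk}\sigma_k^4$. The identity $\sum_k a_{kk}b_{kk}\sigma_k^4=\operatorname{Tr}[\mathbf A\circ\mathbf B\circ\mathbf\Sigma_j^2]$ holds because $\mathbf\Sigma_j^2$ is diagonal. Collecting the diagonal coefficients gives $\frac{\tilde\nu_4}{s}-1-(\kappa+1)=\frac{\tilde\nu_4}{s}-\kappa-2$, which is the claimed formula.

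The only delicate step is the bookkeeping of the overlapping index patterns. The diagonal $k=l=k'=l'$ must be counted once, in the fourth-moment term, and removed from the three pairing sums. In the real case, the symmetry of $\mathbf A$ and $\mathbf B$ is what is used to identify pattern (c) with a trace.
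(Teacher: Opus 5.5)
Your proposal is correct and is essentially the standard direct moment expansion behind this identity. Sparsity enters only through $\mathbb E|x_{ij}|^4=\tilde\nu_4\sigma_{ij}^4/s$, and your bookkeeping of the patterns $k=l\neq k'=l'$, $k=l'\neq l=k'$, $k=k'\neq l=l'$ and the full diagonal gives exactly the coefficients $(\kappa+1)$ and $\tilde\nu_4/s-\kappa-2$, with symmetry of $\mathbf B$ needed only in the real case. The paper defers this proof to its supplement, which is not reproduced here, so I cannot compare against its exact argument; an equivalent packaging is to write $\mathbf x_j=\mathbf\Sigma_j^{1/2}\mathbf z_j$, with $\mathbf z_j$ having standardized coordinates of fourth moment $\tilde\nu_4/s$, and apply the classical quadratic-form covariance identity to $\mathbf\Sigma_j^{1/2}\mathbf A\mathbf\Sigma_j^{1/2}$ and $\mathbf\Sigma_j^{1/2}\mathbf B\mathbf\Sigma_j^{1/2}$.
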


To control the growth of moments in our martingale analysis, we establish the following bound for centered quadratic forms. The result highlights how the sparsity parameter $q$ moderates the moment growth. 

\begin{lemma}\label{quad_ineq}
	Assume $\mathbf{x}_j$ follows our model, i.e. $\mathbf{x}_j=\mathbb B_j\mathbf w_j/{\sqrt{s}}$. Let $\mathbf{A}=$ $\left(a_{j k}\right)$ be a $p \times p$ nonrandom symmetric matrix and $\sup_{i,j}\bbe\left|w_{ij}\right|^{\ell}/{|\sigma_{ij}|^\ell} \leq \nu^\prime_{\ell}$. Then for $\ell\geq2$,
	$$
    \bbe\left|\mathbf{x}_j^*\mathbf{A}\mathbf{x}_j-\operatorname{Tr}\mathbf{A}\mathbf \Sigma_j\right|^{\ell} \leq C_{\ell}\left[\left( \nu_4^\prime\frac{n^2}{q^2} \|\mathbf A\|^2\right)^{\ell/2}+\nu_{2 \ell}^\prime \frac{n^\ell}{q^{2\ell-2}}\|\mathbf A\|^{\ell}\right],
	$$
	where $C_{\ell}$ is a constant depending on $\ell$ only.
\end{lemma}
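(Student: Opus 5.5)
The plan is to reduce Lemma~\ref{quad_ineq} to the classical moment bound for quadratic forms with independent entries, Lemma~B.26 of Bai--Silverstein. That bound says: if $\mathbf u=(u_1,\dots,u_p)$ has independent centered entries with $\bbe|u_i|^2=\tau_i^2$, then for $\ell\ge2$
$$
\bbe|\mathbf u^*\mathbf A\mathbf u-\Tr \mathbf A\,\mathrm{diag}(\tau_i^2)|^\ell\le C_\ell\Big[\big(\textstyle\sum_i \bbe|u_i|^4\,\Tr(\mathbf A\mathbf A^*)/p\big)^{\ell/2}\cdots\Big].
$$
The heterogeneous version is easiest to rederive rather than quote. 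We apply it to $u_i=x_{ij}=b_{ij}w_{ij}/\sqrt s$. These entries are independent in $i$ and centered, with $\bbe|x_{ij}|^2=\sigma_{ij}^2$, so the centering $\Tr\mathbf A\mathbf\Sigma_j$ is correct.

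The key input is the moment computation for the sparse entries:
$$
\bbe|x_{ij}|^{m}=s^{1-m/2}\,\bbe|w_{ij}|^m\le \nu'_m\sigma_{\max}^m\,(n/q^2)^{m/2-1}.
$$
In particular $\bbe|x_{ij}|^4\le K\nu'_4\, n/q^2$ and $\bbe|x_{ij}|^{2\ell}\le K\nu'_{2\ell}(n/q^2)^{\ell-1}$.

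We split the centered form into a diagonal part and an off-diagonal part,
$$
\mathbf x_j^*\mathbf A\mathbf x_j-\Tr\mathbf A\mathbf\Sigma_j=\sum_i a_{ii}(|x_{ij}|^2-\sigma_{ij}^2)+\sum_{i\ne k}\bar x_{ij}a_{ik}x_{kj},
$$
and bound each $\ell$-th moment separately.

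\textbf{Diagonal part.} This is a sum of independent centered variables, so Rosenthal's inequality gives
$$
\bbe\Big|\sum_i a_{ii}(|x_{ij}|^2-\sigma_{ij}^2)\Big|^\ell\le C_\ell\Big[\Big(\sum_i|a_{ii}|^2\bbe|x_{ij}|^4\Big)^{\ell/2}+\sum_i|a_{ii}|^\ell\bbe|x_{ij}|^{2\ell}\Big].
$$
Using $|a_{ii}|\le\|\mathbf A\|$ and $p\asymp n$, the first term is at most $(K\nu'_4\, n\cdot n/q^2\,\|\mathbf A\|^2)^{\ell/2}$. The second is at most $K\nu'_{2\ell}\, n(n/q^2)^{\ell-1}\|\mathbf A\|^\ell=K\nu'_{2\ell}\,n^\ell q^{2-2\ell}\|\mathbf A\|^\ell$. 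These are exactly the two terms in the claim.

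\textbf{Off-diagonal part.} We treat it as a martingale $\sum_k \bar{x}_{kj}\big(\sum_{i<k}a_{ki}x_{ij}\big)+\text{c.c.}$ with respect to the filtration generated by $x_{1j},\dots,x_{kj}$, and apply Burkholder's inequality. This gives a conditional-variance term and a max/sum term. The conditional-variance term is
$$
\bbe\Big(\sum_k\sigma_{kj}^2\Big|\sum_{i<k}a_{ki}x_{ij}\Big|^2\Big)^{\ell/2}.
$$
To handle it, we expand $\sum_k|\cdot|^2$ as a quadratic form in $\mathbf x_j$ with matrix norm $\le\|\mathbf A\|^2$ and trace $\le p\|\mathbf A\|^2$, then induct on $\ell$ (the standard argument of Bai--Silverstein). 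This yields $(n\|\mathbf A\|^2)^{\ell/2}$ plus lower-order pieces, all dominated by the right-hand side since $n/q^2\ge1$. The second Burkholder term is $\sum_k\bbe|x_{kj}|^\ell\,\bbe|\sum_{i<k}a_{ki}x_{ij}|^\ell$. Here we bound the inner factor via Rosenthal again by
$$
C\Big[(\|\mathbf A\|^2)^{\ell/2}+\sum_i|a_{ki}|^\ell\bbe|x_{ij}|^\ell\Big],
$$
using $\sum_i|a_{ki}|^2\le\|\mathbf A\|^2$. Combined with $\bbe|x_{kj}|^\ell\le K(n/q^2)^{\ell/2-1}$, this gives terms of order $n(n/q^2)^{\ell-2}\|\mathbf A\|^\ell\le n^\ell q^{2-2\ell}\|\mathbf A\|^\ell$, using $n\ge q^2$.

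The main obstacle is the bookkeeping in the off-diagonal Burkholder step. Every cross term must be checked to be bounded by one of the two target terms, which requires the interpolation $\bbe|x|^m\le(\bbe|x|^4)^{\cdot}(\bbe|x|^{2\ell})^{\cdot}$ (Hölder on moments) together with $q^2\le n$. I would first try to reduce to the case where the powers of $n/q^2$ appear monotonically, so that the mixed terms are bounded by the maximum of the two extremes.
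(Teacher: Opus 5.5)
Your diagonal estimate, your second Burkholder term and the moment computation $\bbe|x_{ij}|^m=s^{1-m/2}\bbe|w_{ij}|^m$ are all correct. The Hölder bookkeeping you flag as the main obstacle is harmless. The real problem is the conditional-variance term. There you write $\sum_k\sigma_{kj}^2|\sum_{i<k}a_{ki}x_{ij}|^2=\mathbf x_j^*\mathbf L^*\mathbf\Sigma_j\mathbf L\mathbf x_j$, where $\mathbf L$ is the strictly lower-triangular part of $\mathbf A$. The norm of this matrix can be as large as $\sigma_{\max}^2\|\mathbf L\|^2$, and $\|\mathbf L\|\le\|\mathbf A\|$ is false. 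Triangular truncation is not bounded in operator norm: $\|\mathbf L\|$ can be of order $\log p\,\|\mathbf A\|$ even for Hermitian $\mathbf A$. For example, take $a_{jk}=\mathbf i/(j-k)$ for $j\neq k$; then $\|\mathbf A\|\le\pi$, while the lower-triangular part has norm growing like $\log p$. The trace bound $\Tr\mathbf L^*\mathbf\Sigma_j\mathbf L\le\sigma_{\max}^2p\|\mathbf A\|^2$ is fine. However, your induction at exponent $\ell/2$ feeds in the operator-norm bound $\|\mathbf A\|^2$, so as written it does not close with a constant depending only on $\ell$.

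The simplest repair is the reduction you announced at the start and then set aside; the variance profile needs no rederivation. Drop the coordinates with $\sigma_{ij}=0$ (they vanish a.s.) and set $\xi_{ij}=x_{ij}/\sigma_{ij}$. Then $\mathbf x_j^*\mathbf A\mathbf x_j-\Tr\mathbf A\mathbf\Sigma_j=\xi_j^*\widetilde{\mathbf A}\xi_j-\Tr\widetilde{\mathbf A}$ with $\widetilde{\mathbf A}=\mathbf\Sigma_j^{1/2}\mathbf A\mathbf\Sigma_j^{1/2}$ and $\|\widetilde{\mathbf A}\|\le\sigma_{\max}^2\|\mathbf A\|$. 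The $\xi_{ij}$ are independent, centered, of unit variance, with $\bbe|\xi_{ij}|^4\le\nu_4'n/q^2$ and $\bbe|\xi_{ij}|^{2\ell}\le\nu_{2\ell}'(n/q^2)^{\ell-1}$. Apply Bai--Silverstein's Lemma B.26, then use $\Tr\widetilde{\mathbf A}\widetilde{\mathbf A}^*\le p\|\widetilde{\mathbf A}\|^2$, $\Tr(\widetilde{\mathbf A}\widetilde{\mathbf A}^*)^{\ell/2}\le p\|\widetilde{\mathbf A}\|^{\ell}$ and $p\asymp n$. This gives exactly the two terms of the lemma; that is the paper's argument, and the shape of the bound reflects it.

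If you prefer a self-contained proof, there are two options. First, you can decouple the off-diagonal sum (de la Pe\~na--Montgomery-Smith) and apply Rosenthal conditionally on one copy. The conditional variance is then $\sum_k\sigma_{kj}^2|\sum_{i\neq k}a_{ik}\bar x_{ij}|^2$. This is a nonnegative form whose matrix has norm at most $\sigma_{\max}^2\|\mathbf A_0\|^2\le4\sigma_{\max}^2\|\mathbf A\|^2$, where $\mathbf A_0$ is the full off-diagonal part of $\mathbf A$, and your induction then goes through. Second, you can keep the triangular martingale, use $\|\mathbf L\|\le C\log p\,\|\mathbf A\|$, and absorb the logarithms into the slack of the right-hand side, which comes from replacing traces by $p\|\mathbf A\|^{\cdot}$ and from $q\le n^{1/2}$. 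For instance, the first term produced by the induction is then at most $C(\log p)^{\ell}(q/n)^{\ell/2}\le C(\log p)^{\ell}n^{-\ell/4}$ times the first target term. This route works but has to be checked term by term.
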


The CLT proof via contour integration requires the spectral norm $\|\mathbf{S}_n\|$ to be uniformly bounded. The next lemma verifies this, showing it is of constant order with high probability. Here, an event sequence $\{\mathcal A_n\}_{n\ge1}$ is said to hold with high probability  if, for any $\ell > 0$,
$1- \mathbb{P}(\mathcal A_n) = o(n^{-\ell})$ as $n \to \infty.$ 
\begin{lemma}\label{norm}
Under Assumptions \ref{ass1}--\ref{ass2} and \ref{ass4}, we have, with high probability,
\[
\|\mathbf S_n\|\le \sigma_{\max}^{2}(1+\sqrt c)^{2}.
\]
\end{lemma}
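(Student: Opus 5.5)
The plan is to prove the bound in the form $\|\mathbf S_n\|\le \sigma_{\max}^2(1+\sqrt c)^2+\eta$ for every fixed $\eta>0$, with high probability. I read the constant in the statement this way, since an exact bound with no slack cannot hold for the edge of the spectrum. The proof uses the moment method with a slowly growing power $k=k_n$, applied to a truncated matrix.

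\emph{Step 1 (truncation).} Replace $w_{ij}$ by $\hat w_{ij}=w_{ij}I(|w_{ij}|\le \delta_n q^{1/2})$ (or some comparable level $\delta_n q^{\alpha}$), then recentre and renormalize.

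Only the entries with $b_{ij}=1$ matter, and there are about $pq^2$ of them. The $(8+\epsilon)$-th moment of Assumption~\ref{ass1} then shows that, with probability tending to one, the truncated and original matrices coincide. A rank/Frobenius perturbation argument controls the effect of recentring and renormalizing, and it changes the variance profile only by $o(1)$ uniformly. This reduction step cannot deliver an $o(n^{-\ell})$ probability with only $8+\epsilon$ moments. So, as is standard in CLT proofs, the high-probability statement will be established for the truncated matrix, which is the object the CLT argument actually uses.

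After truncation, the entries $y_{ij}=b_{ij}\hat w_{ij}/\sqrt{ns}$ satisfy, for $m\ge 2$,
\[
\bbe|y_{ij}|^m\le \frac{\sigma_{\max}^2}{n}\,\bigl(K\delta_n q^{1/2}\bigr)^{m-2} q^{-(m-2)} .
\]
The mechanism is this: each additional power beyond the second gains a factor of order $q^{-1/2}$ at least, which is a negative power of $n$ because $q\ge n^{\phi}$.

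\emph{Step 2 (trace bound).} Expand $\bbe\Tr \mathbf S_n^{k}$ as a sum over closed bipartite walks $i_1 j_1 i_2 j_2\cdots i_k j_k i_1$, grouped by the shape of the underlying graph.

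For walks in which every edge is traversed exactly twice (tree-like walks), bound each $\sigma_{ij}^2$ by $\sigma_{\max}^2$ using Assumption~\ref{ass2}. This reduces their contribution to the homogeneous Marchenko--Pastur count, which is at most $p\,\sigma_{\max}^{2k}(1+\sqrt{c_n})^{2k}$ times a polynomial factor in $k$.

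Walks with $r$ fewer vertices than a tree have edges of multiplicity $\ge 3$ or cycles. Each lost vertex costs a factor $n^{-1}$, while the extra moments cost only $(K\delta_n q^{1/2}/q)^{m-2}$. Using the standard Füredi--Komlós / Bai--Yin style counting of such walks (at most $k^{Kr}$ shapes per defect level), their total contribution is
\[
p\,\sigma_{\max}^{2k}(1+\sqrt{c_n})^{2k}\sum_{r\ge1}\bigl(k^{K}/q^{1/2}\bigr)^{r}.
\]
This is negligible provided $k^{K}=o(q^{1/2})$, which holds for $k=\lceil (\log n)^2\rceil$ since $q\ge n^{\phi}$.

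\emph{Step 3 (conclusion).} By Markov's inequality,
\[
\mathbb P\bigl(\|\mathbf S_n\|\ge a+\eta\bigr)\le \frac{\bbe\Tr\mathbf S_n^{k}}{(a+\eta)^{k}}\le Kp\,k^{K}\Bigl(\frac{a+\eta/2}{a+\eta}\Bigr)^{k},
\]
where $a=\sigma_{\max}^2(1+\sqrt c)^2$ and we used $c_n\to c$. Because $k\gg\log n$, the right-hand side is $o(n^{-\ell})$ for every $\ell$.

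The main obstacle is Step 2. It requires the combinatorial bookkeeping for heavy-multiplicity edges under sparsity, specifically verifying that the gain $q^{-1/2}$ per excess moment beats the $k^{K}$ growth in the number of walk shapes. The choice of truncation level and of $k$ must be balanced so that this holds uniformly for all $q\in[n^{\phi},n^{1/2}]$. I would first try the truncation level $\delta_n q^{1/2}$ with $k\asymp(\log n)^2$, and lower the truncation level (to $q^{\alpha}$ with small $\alpha$) if the counting turns out to be too lossy.
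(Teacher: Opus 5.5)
Reading the bound with slack $\eta>0$ is the right interpretation, and your Steps 2--3 are sound for a matrix with uniformly, polynomially small entries. The gap is in Step 1: the truncation level $\delta_n q^{1/2}$ is incompatible with having only $8+\epsilon$ moments.

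The expected number of retained entries ($b_{ij}=1$) with $|w_{ij}|>\delta_n q^{1/2}$ is $\asymp pq^2\,\mathbb{P}(|w_{ij}|>\delta_n q^{1/2})$. The moment assumption only bounds this by $O(p\,\delta_n^{-8-\epsilon}q^{-2-\epsilon/2})$, which is not small unless $q\gg n^{1/(2+\epsilon/2)}$, and this order is essentially attained. For a tail $\mathbb{P}(|w_{ij}|>t)\asymp t^{-8-2\epsilon}$ with $\epsilon<1$ and $q=n^{1/3}$, the count is of order $n^{1/3-\epsilon/3}\delta_n^{-8-2\epsilon}\to\infty$. By independence, such entries are then present with probability tending to one.

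So your claim that the truncated and original matrices coincide with probability tending to one fails throughout most of the high-sparsity range. The matrix you control in Step 2 is neither $\mathbf S_n$ nor the matrix the CLT uses. That one is truncated at $\delta_n q$, so it only has $|y_{ij}|\le\delta_n$, and it may contain a diverging number of entries with $\delta_n q^{-1/2}<|y_{ij}|\le\delta_n$. Your fallback of lowering the level to $\delta_n q^{\alpha}$ goes the wrong way. The moment condition pushes the threshold $t$ up (you need $pq^2t^{-8-\epsilon}\to0$), while the counting only needs $t/q$ to be polynomially small.

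A working choice is $t=n^{-\tau}q$, i.e.\ $\delta_n=n^{-\tau}$ in the paper's truncation, with $0<\tau<\phi-2/(8+\epsilon)$. Such $\tau$ exists because $\phi>1/4$, and this choice still satisfies \eqref{linderberg}, hence $\mathbb{P}(\mathbf S_n\neq\widehat{\mathbf S}_n)\to0$. Centering only lowers second moments, so no renormalization is needed for an upper bound, and you get $\mathbb{E}|y_{ij}|^m\le\sigma_{\max}^2 n^{-1}(2n^{-\tau})^{m-2}$. Your walk counting then goes through with $q^{-1/2}$ replaced by $n^{-\tau}$, since $k^K n^{-\tau}\to0$ for $k\asymp(\log n)^2$.

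Alternatively, keep your level and bridge to the $\delta_n q$-truncation directly. Outside an event of probability $O(n^{1-2K\phi+o(1)})$, no row or column contains more than $K$ entries with $\delta_n q^{1/2}<|w_{ij}|\le\delta_n q$. By the Schur test, the two truncated matrices then differ in norm by at most $K\delta_n$ plus a negligible centering term. Taking $K$ large in terms of $\ell$ makes the exceptional event $o(n^{-\ell})$. As written, your proposal contains neither of these ingredients.
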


The proofs of the above lemmas are deferred to the supplementary material.

\section{Main results}
This section presents the main theoretical contributions of this work. We first establish the almost sure convergence of the ESD of the sparse Gram matrix with a given variance profile to a deterministic LSD. We then develop CLTs for LSS across both moderate- and high-sparsity regimes.

\subsection{The limiting spectral distribution}
We begin by recalling the definition of the Stieltjes transform. For a probability measure
$v$ on $\mathbb{R}$, its Stieltjes transform is the analytic function  $m_v: \mathbb{C}\setminus\mathbb{R}^+ \rightarrow \mathbb{C}\setminus\mathbb{R}^+$, defined by
$$
m_v(z):=\int_{\mathbb{R}} \frac{\mathrm{d} v(x)}{x-z} \quad\left(z \in \mathbb{C}\setminus\mathbb{R}^+\right).
$$
We shall denote by $ \mathscr S \left(\mathbb{R}^{+}\right)$ the set of Stieltjes transforms of probability measures with support on $\mathbb{R}^{+}$.

The resolvent of a matrix plays a central role in random matrix theory, as it is closely linked to the Stieltjes transform of the ESD.  For any $z \in \mathbb{C} \setminus \mathbb{R}^{+}$, we define
$$\begin{array}{ll}\mathbf Q_n(z)=\left(\mathbf Y_n \mathbf Y_n^*-z \mathbf I_p\right)^{-1}=\left(q_{i j}(z)\right)_{1 \leq i, j \leq p},  \\ \widetilde{\mathbf Q}_n(z)=\left(\mathbf Y_n^* \mathbf Y_n-z \mathbf I_n\right)^{-1}=\left(\tilde{q}_{i j}(z)\right)_{1 \leq i, j \leq n}.\end{array}$$
Let $m_n(z):=\operatorname{Tr} \mathbf Q_n(z)/p$ and $\underline{m}_n(z):= \operatorname{Tr}\widetilde{\mathbf{Q}}_n(z)/n$, which  are exactly the Stieltjes transforms of the ESDs of
$\mathbf Y_n \mathbf Y_n^*$ and $\mathbf Y_n^*\mathbf Y_n$, respectively. 
The following theorem establishes their almost sure convergence  to  deterministic equivalents;  the proof is provided in Appendix \ref{proof-th1}.

\begin{theorem}\label{first-order}
    Let $\{\mathbf{Y}_n \mathbf{Y}_n^*\}$ be a family of random matrices satisfying
    Assumptions~\ref{ass1}--\ref{ass2} and~\ref{ass4}. 
    Moreover, Assumption~\ref{ass1} can be relaxed to require only that
    $w_{ij}$ have  uniformly bounded $(4+\epsilon)$-th moments for some $\epsilon>0$. Define the system of $p + n$ equations:
    \begin{equation}
        \begin{cases}
            t_{i}(z) = \dfrac{-1}{z[1 + (1 / n) \operatorname{Tr} ( \widetilde{\mathbf{\Sigma}}_{i} \widetilde{\mathbf{T}}_n(z) ) ]}, & \text{for } 1 \leq i \leq p, \\
            \tilde{t}_{j}(z) = \dfrac{-1}{z\left[1 + (1 / n) \operatorname{Tr} \left( \mathbf{\Sigma}_{j} \mathbf{T}_n(z) \right) \right]}, & \text{for } 1 \leq j \leq n,
        \end{cases} \label{equation}
    \end{equation}
    where
    \begin{align*}
        \mathbf{T}_n(z) = \operatorname{diag}\left(t_{i}(z),\; 1 \leq i \leq p\right), \quad
        \widetilde{\mathbf{T}}_n(z) = \operatorname{diag}\left(\tilde{t}_{j}(z),\; 1 \leq j \leq n\right).
    \end{align*}
    Then the following  statements hold:
    \begin{enumerate}
        \item The system admits a unique solution $\left(t_{1}, \ldots, t_{p}, \tilde{t}_{1}, \ldots, \tilde{t}_{n}\right) \in \mathscr{S}(\mathbb{R}^{+})^{p+n}$. In particular, $m_n^0(z) =  \operatorname{Tr} \mathbf{T}_n(z)/p$ and $ \underline{m}_n^0(z) =  \operatorname{Tr} \widetilde{\mathbf{T}}_n(z)/n$ belong to $\mathscr S(\mathbb R^+)$.
        There exist probability measures $\pi_n$ and $\tilde\pi_n$ on $\mathbb R^+$ such that
        \begin{align*}
            m_n^0(z) = \int_0^\infty \frac{\pi_n(\mathrm{d}\lambda)}{\lambda - z}, \quad
            \underline{m}_n^0(z) = \int_0^\infty \frac{\tilde{\pi}_n(\mathrm{d}\lambda)}{\lambda - z}.
        \end{align*}

        \item Almost surely, for all $z \in \mathbb{C} \setminus \mathbb{R}^{+}$,
        \[
        m_n(z) - m_n^0(z) \to 0, \quad 
        \underline{m}_n(z) - \underline{m}_n^0(z) \to 0 \quad \text{as } n \to \infty.
        \]
    \end{enumerate}
\end{theorem}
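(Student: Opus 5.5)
The approach is the Hachem--Loubaton--Najim argument for Gram matrices with a variance profile, adapted to the Bernoulli mask. The key observation is that $x_{ij}=b_{ij}w_{ij}/\sqrt s$ has mean zero and variance $\mathbb E|x_{ij}|^2=\sigma_{ij}^2$, so the matrix $\mathbf X_n$ has exactly the variance profile $(\sigma_{ij}^2)$. The system \eqref{equation} therefore does not involve $s$ at all.

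\emph{Part 1: the deterministic system.} Part 1 is purely deterministic and does not depend on the sparsity. I would quote it from Hachem et al.\ \cite{Hachem2007}, or reprove it as follows.

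\emph{Existence.} Take the fixed point of the map $(t,\tilde t)\mapsto$ right-hand side of \eqref{equation}, acting on $\mathscr S(\mathbb R^+)^{p+n}$. Iterating from $t_i=\tilde t_j=-1/z$ gives Stieltjes transforms at every step: if $\tilde t_j\in\mathscr S(\mathbb R^+)$, then $-1/(z(1+\frac1n\sum_j\sigma_{ij}^2\tilde t_j))$ is again in $\mathscr S(\mathbb R^+)$. This holds because $z\tilde t_j(z)$ is the negative of the Stieltjes transform of a measure of total mass $\le1$ on $\mathbb R^+$, so $\Im(z\,\cdot)$ has the right sign. Convergence of the iteration follows from a contraction estimate for $z=-x$ with $x$ large, together with normal-family compactness and analytic continuation.

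\emph{Uniqueness.} Subtract the equations for two solutions and use the bound $|t_i-t_i'|\le\|\ldots\|\,|t_i|\,|t_i'|\,\frac1n\sum_j\sigma_{ij}^2|\tilde t_j-\tilde t_j'|$ for $z$ in a region where the product is $<1$. The standard argument via $\Im$-parts, comparing $\Im t_i/\Im z$, gives the strict inequality. Uniqueness then extends to all of $\mathbb C\setminus\mathbb R^+$ by analyticity. The measures $\pi_n,\tilde\pi_n$ are the averages of the representing measures of the $t_i,\tilde t_j$.

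\emph{Part 2: convergence of the resolvent traces.} I split $m_n-m_n^0=(m_n-\mathbb Em_n)+(\mathbb Em_n-m_n^0)$.

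\emph{Concentration.} For the first term I use the martingale decomposition over columns, $m_n-\mathbb Em_n=\sum_j(\mathbb E_j-\mathbb E_{j-1})\frac1p(\Tr\mathbf Q-\Tr\mathbf Q_j)$. Each increment is bounded by $1/(p\,\Im z)$, since it is a rank-one perturbation. Burkholder's inequality then gives $\mathbb E|m_n-\mathbb Em_n|^4\le K n^{-2}$, and Borel--Cantelli yields almost sure convergence for fixed $z$. This step is insensitive to the sparsity.

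\emph{Approximate equation.} For $\mathbb Em_n$, I derive the approximate system using the Schur complement for diagonal entries. One has $q_{ii}=-1/\bigl(z(1+\mathbf u_i^*\widetilde{\mathbf Q}^{(i)}\mathbf u_i)\bigr)$, where $\mathbf u_i$ is the $i$th row of $\mathbf Y_n$, and similarly for $\tilde q_{jj}$. Next, replace $\mathbf u_i^*\widetilde{\mathbf Q}^{(i)}\mathbf u_i$ by $\frac1n\Tr\widetilde{\mathbf\Sigma}_i\widetilde{\mathbf Q}$. By Lemma~\ref{quad_ineq} with $\ell=2$ (applied to rows, with $\|\widetilde{\mathbf Q}^{(i)}\|\le1/\Im z$ and after the $1/n$ scaling), the error has second moment $O(\nu_4'/q^2+1/n)$. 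This tends to $0$ because $q\ge n^\phi$, and it is here that the $(4+\epsilon)$ moments are used. The rank-one replacement $\widetilde{\mathbf Q}^{(i)}\to\widetilde{\mathbf Q}$ costs $O(1/n)$.

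\emph{Stability.} This yields $\mathbb E q_{ii}=-1/\bigl(z(1+\frac1n\Tr\widetilde{\mathbf\Sigma}_i\mathbb E\widetilde{\mathbf Q})\bigr)+\varepsilon_{i}$ with $\max_i|\varepsilon_i|\to0$, using the lower bound $|1+\ldots|\ge \Im z/|z|$ type estimates to control denominators. The diagonal of $\mathbb E\mathbf Q$, $\mathbb E\widetilde{\mathbf Q}$ therefore solves a perturbed version of \eqref{equation}. Passing to the limit requires a stability estimate for \eqref{equation}, and this is the main obstacle. I would first prove it for $z=-x$ with $x$ large, where the map is a contraction in sup norm with constant $\sigma_{\max}^2 K/x^2$. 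This gives $\max_i|\mathbb Eq_{ii}-t_i|\to0$ there.

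\emph{Extension.} Both $\mathbb E m_n$ and $m_n^0$ are Stieltjes transforms of probability measures on $\mathbb R^+$, hence form normal families. By Montel's theorem and the Vitali convergence theorem, convergence on a set with an accumulation point extends to all of $\mathbb C\setminus\mathbb R^+$. Combined with the concentration step over a countable dense set of $z$ and equicontinuity, this gives the almost sure statement simultaneously for all $z$. The same argument applies to $\underline m_n$.
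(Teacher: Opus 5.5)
Your proposal is correct. For Part~1 it matches the paper, since both defer to Hachem et al. For Part~2 it takes a different route.

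\textbf{The paper's route.} The paper works pathwise.
\begin{enumerate}
\item It introduces the random diagonal matrices $\mathbf P_n,\widetilde{\mathbf P}_n$, built from $\widetilde{\mathbf Q}_n,\mathbf Q_n$.
\item It bounds $\frac1p\Tr(\mathbf Q_n-\mathbf P_n)$ by a column leave-one-out expansion and Lemma~\ref{quad_ineq} with $\ell=2+\epsilon/2$. This is where the $(4+\epsilon)$-th moments enter.
\item It bounds $\frac1p\Tr(\mathbf P_n-\mathbf T_n)$ by a coupled sup-norm estimate on $1/t_i-1/p_i$ and $1/\tilde t_j-1/\tilde p_j$.
\item It arrives at $\mathbb E|\frac1p\Tr(\mathbf Q_n-\mathbf T_n)|^{2+\epsilon/2}\le Kq^{-(2+\epsilon/2)}$ and finishes with Markov and Borel--Cantelli.
\end{enumerate}

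\textbf{Your route.} You separate the fluctuation $m_n-\mathbb E m_n$, handled by bounded martingale differences, from the deterministic bias $\mathbb E m_n-m_n^0$. You handle the bias through the row Schur complement, the perturbed system, a contraction at $z=-x$ with $x$ large, and Vitali.

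\textbf{What your split buys.} Your $O(n^{-2})$ fourth-moment bound is summable at every sparsity level, and the bias only has to vanish. You therefore get the almost sure statement for every $q\to\infty$, using only fourth moments. The paper's Borel--Cantelli step instead needs $\sum_n q^{-(2+\epsilon/2)}<\infty$, which fails for $q=n^{\phi}$ with small $\phi$. The paper itself falls back on $q$ growing sufficiently fast, or on subsequences.

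Likewise, your contraction at $z=-x$ is genuine: each half-step is Lipschitz with constant at most $\sigma_{\max}^2\max(1,p/n)/x$. The paper's combined constant $K_1K_1'\sigma_{\max}^4|z|^2/|\Im z|^4$ on $D$ need not be below $1$. Its argument tacitly relies on the analytic-continuation step that you make explicit.

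\textbf{What the paper's route buys.} It gives a quantitative $O(1/q)$ rate for the random quantity itself. That is closer to the estimates reused later in the CLT.

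\textbf{Two details to tidy.}
\begin{enumerate}
\item The bias step also needs concentration of $\frac1n\Tr\widetilde{\mathbf\Sigma}_i\widetilde{\mathbf Q}$ uniformly in $i$, not just of $m_n$. The same martingale argument gives variance $O(1/n)$, with a constant depending only on $\sigma_{\max}$ and $z$.
\item Lemma~\ref{quad_ineq} is stated for columns and symmetric $\mathbf A$. You should note that its row version for the normal matrix $\widetilde{\mathbf Q}^{(i)}$ follows from the same variance computation. It needs only fourth moments there, not $4+\epsilon$.
\end{enumerate}
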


\begin{remark}
    Under Assumption~\ref{ass2}, \cite{Hachem2007} derived a deterministic equivalent for the Stieltjes transform of $\mathbf{Y}_n\mathbf{Y}_n^*$ in the non-sparse model $y_{ij} = n^{-1/2} x_{ij}$, where $\mathbb{E} x_{ij} = 0$, $\mathbb{E}|x_{ij}|^2 = \sigma_{ij}^2$, and the $x_{ij}$ have uniformly bounded $(4+\epsilon)$-th moments. Theorem~\ref{first-order} recovers this result when $s = 1$, $b_{ij} \equiv 1$, and $w_{ij} = x_{ij}$ satisfy the same moment conditions. 
    More importantly, Theorem~\ref{first-order} holds under  weaker assumptions, allowing the $(4+\epsilon)$-th moments of $x_{ij} = b_{ij} w_{ij}/\sqrt{s}$ to be unbounded as $s \to 0$, thus capturing the tail amplification induced by sparsity. By accommodating general sparsity levels $s \in (0,1]$, it substantially extends the scope of deterministic equivalents beyond \cite{Hachem2007}.
\end{remark}


\subsection{CLT in the moderate-sparsity regime}

We consider the LSS of the Gram matrix $\mathbf S_n=\mathbf Y_n \mathbf Y_n^*$, defined for a test function $f \in \mathcal{A}$ as
$$
L_n(f, \mathbf S_n)=\int f(x) \mathrm{d} F^{\mathbf S_n}(x),
$$
where $\mathcal{A}$ represents a set of analytic functions defined on an open set of the complex plane containing the supporting set of the LSD of  $\mathbf S_n$.
Such statistics naturally arise in various inferential problems, particularly in hypothesis testing regarding the variance structure of the population.

This section focuses on the moderate-sparsity regime, where the retention probability $s \in (0,1]$ remains constant as $n \to \infty$.  
To capture non-degenerate second-order fluctuations, we consider the centered and properly normalized version of the LSS,
\begin{align}\label{Lc}
	L^{c}(f, \mathbf{S}_n)
	= \sqrt{p}\, q
	\int f(x)\, \mathrm{d}\!\left[F^{\mathbf{S}_n}(x) - \pi_n(x)\right],
\end{align}
where $\pi_n$ denotes the deterministic equivalent of the ESD $F^{\mathbf{S}_n}$.  
The scaling factor $\sqrt{p}q$ reflects the variance structure induced by sparsity. While the classical $n$-scaling remains applicable when $s$ is fixed (since $\sqrt{p} q$ is of the same order as $n$), we therefore employ $\sqrt{p}q$ for a unified treatment across both moderate- and high-sparsity regimes. This choice is necessary because, as shown in equation (1) of the supplementary material, it is the only scaling that yields  a non-degenerate limit in the high-sparsity case.


For $z_1, z_2 \in \mathbb{C}\setminus\mathbb{R}^+$, define the $n \times n$ matrix $\mathbf A_n(z_1,z_2)$ with  entries
$$
a_{lm}(z_1,z_2)=\frac{1}{n} \frac{(1 / n) \operatorname{Tr} \mathbf\Sigma_{l} \mathbf T(z_1)\mathbf\Sigma_m \mathbf T(z_2)}{\left[1+(1 / n) \operatorname{Tr} \mathbf\Sigma_{l} \mathbf T(z_1)\right]\left[1+(1 / n) \operatorname{Tr} \mathbf\Sigma_{l} \mathbf T(z_2)\right]}, \quad 1 \leq l, m \leq n.
$$
The main result is stated in the following theorem.
\begin{theorem}\label{th1}
    Let $f_1, \dots, f_k$ be functions that are analytic in a complex neighborhood of the support of the LSD of $\mathbf{S}_n$, and continuous at zero. Suppose Assumptions~\ref{ass1}--\ref{ass4} hold  and $s\in (0,1]$ is fixed. Then the random vector $\left(L^c(f_j, \mathbf{S}_n)\right)_{j=1}^k$
    is tight and converges weakly to a Gaussian vector \(\left(X_{f_j}\right)_{j=1}^k\). The mean and covariance structure of the limit are given as follows.
    \begin{enumerate}
        \item The asymptotic mean is characterized by the $n$-dependent contour integral
        \[
        \mu_n(X_f) = -\frac{1}{2\pi \mathbf i} \oint_{\mathcal{C}_1} f(z) \mathscr{E}_n(z)  \mathrm{d}z,
        \]
        where \(\mathscr{E}_n(z) = \frac{1}{n} \sum_{j=1}^n \psi_{j}(z)\), and \(\{\psi_{j}(z)\}_{j=1}^n\) is the unique solution to the system
        \[
        \psi_j(z) = \sum_{m=1}^n a_{jm}(z, z) \psi_m(z) + \theta_j(z), \quad 1 \le j \le n.
        \]
        Here, \(\theta_j(z)\) is defined as:
        \begin{align*}
            \theta_j(z) = {} & \frac{n\tilde{\nu}_4}{q\sqrt{p}} \left\{ \frac{1}{n^2} z^3 \tilde{t}_j^2(z) \sum_{i=1}^p \sigma_{ij}^2 t_i^3(z) \operatorname{Tr} \left( \widetilde{\mathbf{T}}(z) \circ \widetilde{\mathbf{T}}(z) \circ \widetilde{\mathbf{\Sigma}}_i^2 \right) + \frac{1}{n} z^2 \tilde{t}_j^3(z) \operatorname{Tr} \left( \mathbf{T}(z) \circ \mathbf{T}(z) \circ \mathbf{\Sigma}_j^2 \right) \right\} \\
            & + \frac{q}{\sqrt{p}} \frac{1}{n} z^3 \tilde{t}_j^2(z) \sum_{i=1}^p \sigma_{ij}^2 t_i^3(z) \left\{ \kappa \tilde{U}_i(z) - \frac{\kappa+2}{n} \operatorname{Tr} \left( \widetilde{\mathbf{T}}(z) \circ \widetilde{\mathbf{T}}(z) \circ \widetilde{\mathbf{\Sigma}}_i^2 \right) \right\} \\
            & + \frac{q}{\sqrt{p}} z^2 \tilde{t}_j^3(z) \left\{ \kappa U_j(z) - \frac{\kappa+2}{n} \operatorname{Tr} \left( \mathbf{T}(z) \circ \mathbf{T}(z) \circ \mathbf{\Sigma}_j^2 \right) \right\},
        \end{align*}
        with \((\tilde{U}_i(z))_{1\le i\le p}\) and \((U_j(z))_{1\le j\le n}\) determined by the systems in Lemma \ref{mean-eq}. 

        \item The covariance function for test functions \(f, g\) can be calculated by 
        \[
        \nu_n(X_f, X_g) = -\frac{1}{4\pi^2} \oint_{\mathcal{C}_1} \oint_{\mathcal{C}_2} f(z_1) g(z_2) \mathscr{V}_n(z_1, z_2)  \mathrm{d}z_1 \mathrm{d}z_2,
        \]
        where
        \begin{align*}
            \mathscr V_n(z_1,z_2)=\frac{\partial^2}{\partial z_2 \partial z_1}&\left\{\frac{s(\kappa+1)}{p}\sum_{j=1}^nH_{jj}(z_1,z_2) \right.\\
		& +\left. \frac{\tilde\nu_4-\kappa s-2s}{pn}\sum_{j=1}^n \tilde t_j\left(z_1\right) \tilde t_j\left(z_2\right)\operatorname{Tr}\left[\mathbf T^{(1)}(z_1)\circ \mathbf T^{(1)}(z_2) \circ \mathbf  \Sigma_j^2\right]\right\}. 
        \end{align*}
        Here, \(\mathbf{T}^{(1)}(z) = \left( \mathbf{I} + \frac{1}{n} \sum_{k=1}^n \tilde{t}_k(z) \mathbf{\Sigma}_k \right)^{-1}\), and for each \(j\), \(\{H_{\ell j}(z_1,z_2)\}_{\ell=1}^j\) is the unique solution to the triangular system
        \[
        H_{\ell j}(z_1,z_2) = \sum_{i=1}^{j-1} a_{\ell i}(z_1, z_2) H_{i j}(z_1,z_2) + n a_{\ell j}(z_1,z_2), \quad 1 \le \ell \le j.
        \]
    \end{enumerate}
    
    The contours \(\mathcal{C}_1, \mathcal{C}_2\)  are closed, non-overlapping (for the covariance), positively oriented, and each encloses the support of \(\pi_n\).
\end{theorem}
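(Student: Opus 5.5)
The plan is to follow the Bai--Silverstein strategy, adapted to a variance profile as in Hachem et al. By Cauchy's formula,
\[
L^c(f,\mathbf S_n)=-\frac{1}{2\pi\mathbf i}\oint_{\mathcal C} f(z)\, M_n(z)\,\mathrm dz,\qquad M_n(z)=q\sqrt p\,[m_n(z)-m_n^0(z)],
\]
valid on the high-probability event of Lemma~\ref{norm}. I choose the contour $\mathcal C$ so that it encloses $[0,\sigma_{\max}^2(1+\sqrt c)^2]$. Since $f$ is only continuous at zero, the small-$|\Im z|$ part of $\mathcal C$ is handled by truncation in the usual way: I replace $M_n$ by $\widehat M_n$, equal to $M_n$ when $|\Im z|\ge n^{-1}\varepsilon_n$ and linearly interpolated otherwise. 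The resulting error in the integral is $o_P(1)$ by Lemma~\ref{norm}, because $f$ is analytic near the spectrum and bounded near $0$.

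Next I split $M_n=M_n^{(1)}+M_n^{(2)}$, with random part $M_n^{(1)}=q\sqrt p\,(m_n-\bbe m_n)$ and deterministic part $M_n^{(2)}=q\sqrt p\,(\bbe m_n-m_n^0)$. The proof then has three steps.

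\textbf{(i) Finite-dimensional Gaussian limit and covariance.} Write $\Tr\mathbf Q-\bbe\Tr\mathbf Q=\sum_j(\bbe_j-\bbe_{j-1})\Tr\mathbf Q$ with $\bbe_j$ conditioning on $\mathbf y_1,\dots,\mathbf y_j$. By rank-one perturbation, $(\bbe_j-\bbe_{j-1})\Tr\mathbf Q=-(\bbe_j-\bbe_{j-1})\frac{d}{dz}\log(1+\mathbf y_j^*\mathbf Q_j\mathbf y_j)$. Replacing $\mathbf y_j^*\mathbf Q_j\mathbf y_j$ by $\frac1n\Tr\mathbf\Sigma_j\mathbf Q_j$ plus its centered fluctuation gives martingale differences of the form $\frac{d}{dz}\,\tilde t_j\,\bbe_j(\mathbf x_j^*\mathbf Q_j\mathbf x_j-\Tr\mathbf Q_j\mathbf\Sigma_j)/n$, up to negligible terms. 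The Lindeberg condition follows from Lemma~\ref{quad_ineq} with $\ell=4$: the bound is $O(n^4/q^4+n^4/q^6)$ per $j$, and after multiplying by $(q\sqrt p/n)^4$ and summing over $j$ we get $O(p^2/n^3)\to0$. This is where the $8+\epsilon$ moments enter through $\nu'_8$. For the conditional covariance I use Lemma~\ref{split}. Its term $(\kappa+1)\Tr\bbe_j\mathbf Q_j(z_1)\mathbf\Sigma_j\bbe_j\mathbf Q_j(z_2)\mathbf\Sigma_j$, multiplied by $s$ (since $q^2p/n^2=sp/n$), yields the $H_{jj}$ contribution. The term $(\tilde\nu_4/s-\kappa-2)\Tr[\,\cdot\circ\cdot\circ\mathbf\Sigma_j^2]$ gives the Hadamard-product term with diagonal equivalents $\mathbf T^{(1)}$.

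\textbf{(ii) Main obstacle: the bilinear quantity.} The key difficulty is the mixed quantity $\frac1n\Tr\mathbf\Sigma_l\bbe_j\mathbf Q_j(z_1)\mathbf\Sigma_m\bbe_j\mathbf Q_j(z_2)$, in which $\bbe_j$ retains only columns $k<j$. I would expand each $\mathbf Q_j$ through the decomposition $\mathbf Q=\mathbf T-\mathbf T(\cdot)\mathbf Q$ using $\mathbf x_k\mathbf x_k^*$. Terms from $k>j$ are killed by the conditioning, while terms from $k<j$ survive. This produces the approximate linear system $H_{\ell j}=\sum_{i<j}a_{\ell i}H_{ij}+na_{\ell j}+o(1)$. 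Solvability and stability require $\|\mathbf A_n(z_1,z_2)\|_\infty<1$ uniformly on the contours. I would obtain this from $|a(z_1,z_2)|^2\le a(z_1,\bar z_1)a(z_2,\bar z_2)$ together with the identity $\Im t_i=\ldots$, which gives $\rho(\mathbf A(z,\bar z))<1$ quantitatively thanks to Assumption~\ref{ass3}. I expect this to be the main technical obstacle, since the quadratic-form errors must be controlled using Lemma~\ref{quad_ineq} with the unbounded $\nu'_{2\ell}/q^{2\ell-2}$ factors.

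\textbf{(iii) Mean, and tightness.} For the mean I expand $\bbe m_n-m_n^0$ to second order in the fluctuation $\mathbf y_j^*\mathbf Q_j\mathbf y_j-\frac1n\Tr\mathbf\Sigma_j\bbe\mathbf Q$. The variance of this quadratic form, via Lemma~\ref{split}, yields the $\tilde\nu_4/s$ terms, which after the $q\sqrt p$ scaling give the $n\tilde\nu_4/(q\sqrt p)$ coefficients. The $\kappa$ and $(\kappa+2)$ corrections involve $\bbe\mathbf Q^2$-type diagonals, namely $U_j$ and $\tilde U_i$ from Lemma~\ref{mean-eq}. Solving the perturbed diagonal system, i.e. the linearization of \eqref{equation}, produces $\psi_j=\sum_m a_{jm}(z,z)\psi_m+\theta_j$. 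Invertibility follows from (ii) with $z_1=z_2$, and $\mathscr E_n$ is $\frac1n\sum_j\psi_j$. Finally, tightness of $\widehat M_n^{(1)}$ on $\mathcal C$ follows from the Lipschitz moment bound $\bbe|\widehat M^{(1)}_n(z_1)-\widehat M^{(1)}_n(z_2)|^2/|z_1-z_2|^2\le K$, obtained by the same martingale decomposition. Uniform convergence of $M_n^{(2)}$ to $\mathscr E_n$ holds on $\mathcal C$, and the continuous mapping theorem applied to the contour integral finishes the proof.
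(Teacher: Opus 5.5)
Your overall architecture matches the paper's. Both proofs use the Cauchy-integral reduction to $M_n(z)=\sqrt p\,q\,[m_n(z)-m_n^0(z)]$, cut the process off at height $n^{-1}\varepsilon_n$ to get $\widehat M_n$, and split it into a martingale part and a deterministic part. The random part is handled by a martingale CLT, with Lemma~\ref{split} producing the $s(\kappa+1)H_{jj}$ and $(\tilde\nu_4-(\kappa+2)s)$ terms; then come tightness and the linearized system $\psi_j=\sum_m a_{jm}(z,z)\psi_m+\theta_j$ for the mean.

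What you leave out is the step the paper performs before anything else. It replaces $w_{ij}$ by the recentred truncation $w_{ij}I(|w_{ij}|<\delta_n q)-\bbe[w_{ij}I(|w_{ij}|<\delta_n q)]$, with $\delta_n\downarrow 0$ and $\delta_n q\uparrow\infty$ chosen via the Lindeberg-type condition. It then checks three things: $\mathbb P(\mathbf S_n\neq\widehat{\mathbf S}_n)=o(1)$; recentring moves $L^c$ by $o_P(1)$, via an eigenvalue perturbation bound; and the variance profile changes by $o(1)$ uniformly.

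This step is not cosmetic. Your tightness bound and the uniform convergence of $M_n^{(2)}$ must hold on $\mathcal C_n$ down to $\Im z=n^{-1}\varepsilon_n$. Off the event of Lemma~\ref{norm} you only have $\|\mathbf Q\|\le 1/\Im z\le n/\varepsilon_n$, so that event must fail with probability $o(n^{-\ell})$ for large $\ell$. This strong form is how Lemma~\ref{norm} enters.

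For the raw entries this is false. Since $\|\mathbf S_n\|\ge b_{ij}|w_{ij}|^2/q^2$, for identically distributed entries $\mathbb P(\|\mathbf S_n\|>\eta)$ is at least of order $pq^2\,\mathbb P(|w|>\sqrt\eta\,q)$. Under only an $8+\epsilon$ moment assumption with fixed $s$, this can be as large as roughly $n^{-2-\epsilon/2}$. So the strong form of Lemma~\ref{norm}, and the near-axis moment control of $\mathbf Q$ that rests on it, is available only for the truncated matrix. You should truncate and recentre first, then run your three steps on $\widetilde{\mathbf S}_n$.

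Separately, fix the scaling. Since $M_n^{(1)}=\frac{q}{\sqrt p}(\Tr\mathbf Q-\bbe\Tr\mathbf Q)$, the martingale differences carry the factor $q/(n\sqrt p)$ in front of $\mathbf x_j^*\mathbf Q_j\mathbf x_j-\Tr\mathbf Q_j\mathbf\Sigma_j$, not $q\sqrt p/n$. The fourth-moment Lindeberg sum is then $n\cdot(q/(n\sqrt p))^4\cdot O(n^4/q^4)=O(n/p^2)\to0$, and the covariance prefactor is $q^2/(n^2p)=s/(np)$. Your conclusions there stand, but the stated arithmetic ($O(p^2/n^3)$) does not.
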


\begin{remark}
	To verify the consistency of our CLT with those established for complete data in earlier studies, consider the special case where the variance profile matrix $\mathbf{\Sigma} := (\sigma_{ij}^2)$ reduces to $\mathbf{\Sigma}=\sigma^2\mathbf{1}_{p \times p}$ and $s=1$. Then, the model in \eqref{model} becomes $\mathbf y_{j}=\mathbf \Gamma \mathbf z_j/\sqrt{n}$, where $\mathbf \Gamma=\sigma^2\mathbf I_p$, and $(z_{ij})$ are i.i.d. random variables with mean zero and unit variance. The LSD of $\mathbf{\Gamma}$ is the point mass $H(t) = \delta_{\sigma^2}(t)$. In this case, $\tilde t_j(z)$ is identical for all $j$ and is denoted by $\underline m(z)$, which satisfies
	$$
	\underline m(z)=\frac{1}{c\int\frac{t\mathrm d H(t)}{1+t\underline m(z)}-z}.
	$$
	A direct computation shows that, in this setting, our CLT reduces to the classical form for complete data. For more details, see Theorem 1.1 in \cite{bai2004} for the case $\tilde\nu_4 = 3$ and Theorem 1.4 in \cite{Pan2008} for the more general case. The detailed verification can be found in the supplementary material.
\end{remark}


\subsection{CLT in the high-sparsity regime}
We now turn to the CLT for the LSS in the high-sparsity regime, where $s \to 0$ with $n^{\phi} \le q < n^{1/2}$ for some fixed $\phi > 0$.  
Unlike the dense or moderate-sparsity settings, the asymptotic mean term in Theorem~\ref{th1} diverges as $s \to 0$.  
This divergence stems from a mismatch between the fluctuation and centering terms under the CLT normalization: after scaling by $\sqrt{p}q$, the variance of the LSS in \eqref{Lc} remains of order one, whereas the corresponding mean contribution grows at rate $\sqrt{p}/q$.  
Consequently, when $q^2 \ll \sqrt{p}\, q$, the standard centering fails to remove the dominant deterministic component, leading to a degenerate limit.

To restore a valid centering and obtain a finite limit, we refine the definition of the centralized LSS by explicitly subtracting the diverging mean term.  
The corrected statistic is defined as
$$
L^{\tilde c}\left(f,\mathbf S_n\right)=\sqrt{p}q\int f(x) \mathrm d\left[F^{\mathbf{S}_n}(x)-\pi_n(x)\right]+\frac{1}{2 \pi \mathbf i}\oint_{\mathcal C_1} f(z) \mathscr E_n(z)\mathrm d z,
$$
where $\mathscr{E}_n(z) = n^{-1}\sum_{j=1}^n \psi_{j}(z)$, and \(\{\psi_{j}(z)\}_{j=1}^n\) is the unique solution to the system of linear equations
\[
\psi_j(z) = \sum_{m=1}^n a_{jm}(z, z) \psi_m(z) + \theta_j(z), 
\qquad 1 \le j \le n.
\]
Here, $\theta_j(z)$ is given by
\begin{align*}
	\theta_j(z)
	=&\frac{n\tilde \nu_4}{q\sqrt{p}}\left\{\frac{1}{n^2}z^3\tilde t_j^2(z) \sum_{i=1}^p\sigma_{ij}^2t_i^3(z) \operatorname{Tr} \left(\widetilde{\mathbf T}(z)\circ \widetilde{\mathbf T}(z)\circ \widetilde { \mathbf \Sigma}_i^2\right)+\frac{1}{n}z^2\tilde t_j^3\operatorname{Tr} \left(\mathbf T(z)\circ \mathbf T(z)\circ \mathbf  \Sigma_j^2\right)\right\}.
\end{align*}
This additional contour integral compensates for the diverging deterministic bias and yields a properly centered statistic with non-degenerate Gaussian fluctuations.  
The corresponding CLT for the LSS in the high-sparsity regime is summarized in the following corollary.
\begin{corollary}\label{coro1}
    Let $f_1, \dots, f_k$ be functions that are analytic in a complex neighborhood of the support of the LSD of $\mathbf{S}_n$, and continuous at zero.  Under Assumptions \ref{ass1}--\ref{ass4} with $1/4< \phi < 1/2$, the random vector $\left(L^{\tilde c}(f_j, \mathbf{S}_n)\right)_{j=1}^k$
    is tight and converges weakly to a Gaussian vector \(\left(X_{f_j}\right)_{j=1}^k\), where the mean $\mu_n(X_f)=0$ and  the covariance function  between
    $X_f$ and $X_g$ is defined as follows:
	$$
	\begin{aligned}
		\nu_n\left(X_f, X_g\right) =-\frac{1}{4 \pi^2} \oint_{\mathcal C_1}\oint_{\mathcal C_2} f\left(z_1\right) g\left(z_2\right)\mathscr V_n(z_1,z_2)\mathrm d z_1 \mathrm d z_2
	\end{aligned}
	$$
	with 
	$$
	\begin{aligned}
		\mathscr V_n(z_1,z_2)=&\frac{\partial^2}{\partial z_2 \partial z_1}\left\{ \frac{\tilde\nu_4}{pn}\sum_{j=1}^n \tilde t_j\left(z_1\right) \tilde t_j\left(z_2\right)\operatorname{Tr}\left[\mathbf T^{(1)}(z_1)\circ \mathbf T^{(1)}(z_2) \circ \mathbf  \Sigma_j^2\right]\right\}.  
	\end{aligned}
	$$
	The contours \(\mathcal{C}_1, \mathcal{C}_2\)  are closed, non-overlapping, positively oriented, and each encloses the support of \(\pi_n\).
\end{corollary}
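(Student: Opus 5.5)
The corollary will follow from the same martingale machinery used for Theorem~\ref{th1}, re-run while tracking every term's dependence on $s=q^2/n\to0$. The plan has three steps: reduce the LSS to resolvent traces, prove a CLT for the random part, and then evaluate the mean and covariance in the high-sparsity limit.

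\textbf{Reduction to resolvents.} Lemma~\ref{norm} keeps $\|\mathbf S_n\|$ bounded with high probability. So by Cauchy's formula
\[
L^{c}(f,\mathbf S_n)=-\frac{1}{2\pi\mathbf i}\oint_{\mathcal C_1} f(z)\, M_n(z)\,\mathrm dz,\qquad M_n(z)=\sqrt{p}\,q\,[m_n(z)-m_n^0(z)],
\]
after truncating near the real axis in the standard way. I split $M_n=M_n^{(1)}+M_n^{(2)}$, with $M_n^{(1)}=\sqrt p q(m_n-\bbe m_n)$ random and $M_n^{(2)}=\sqrt p q(\bbe m_n-m_n^0)$ deterministic. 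The claim then reduces to two statements:
\begin{itemize}
\item[(i)] $M_n^{(1)}$ converges, as a process on the contour, to a centered Gaussian process with covariance $\mathscr V_n$ as displayed in the corollary;
\item[(ii)] $M_n^{(2)}(z)=\mathscr E_n(z)+o(1)$ uniformly on $\mathcal C_1$, with $\theta_j$ as in the high-sparsity display.
\end{itemize}
Statement (ii) exactly explains the added contour term in $L^{\tilde c}$ and gives mean zero.

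\textbf{The random part (i).} I write $\Tr\mathbf Q-\bbe\Tr\mathbf Q=\sum_j(\bbe_j-\bbe_{j-1})\Tr\mathbf Q$ and reduce each increment to $-(\bbe_j-\bbe_{j-1})\partial_z\log(1+\mathbf y_j^*\mathbf Q_j\mathbf y_j)$, where the leading term is $\varepsilon_j=\mathbf x_j^*\mathbf Q_j\mathbf x_j/n-\Tr\mathbf\Sigma_j\mathbf Q_j/n$.
\begin{itemize}
\item \emph{Lindeberg condition.} Apply Lemma~\ref{quad_ineq} with $\ell=4$ (this uses the $8+\epsilon$ moments). After scaling by $p q^2$, this gives $\sum_j\bbe|\sqrt p q\,\varepsilon_j|^4=O(n\cdot p^2q^4\cdot n^{-4}(n^4/q^4+n^4/q^6))=O(p^2/n^{3})\cdot n\to0$ up to the $q^{-2}$ correction. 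This is fine because $q\ge n^{\phi}$.
\item \emph{Conditional covariance.} I compute $\sum_j\bbe_{j-1}[\cdot]$ via Lemma~\ref{split}. The first term $(\kappa+1)\Tr\mathbf A\mathbf\Sigma_j\mathbf B\mathbf\Sigma_j$ carries, after the $pq^2/n^2$ scaling, a factor $s(\kappa+1)/p$. This produces the $H_{jj}$ term of Theorem~\ref{th1}, which is $O(s)\to0$, since $H$ stays bounded because $\|\mathbf A_n\|<1$ on the contour. The fourth-moment term has coefficient $(\tilde\nu_4-\kappa s-2s)/(pn)\to\tilde\nu_4/(pn)$.
\item \emph{Replacing random quantities.} Diagonal entries of $\bbe_j\mathbf Q_j$ are replaced by the deterministic $\mathbf T^{(1)}$, and $\tilde t_j$ appears as the limit of $-1/(z(1+\Tr\mathbf\Sigma_j\mathbf Q_j/n))$. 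Both replacements use Theorem~\ref{first-order} together with concentration of diagonal resolvent entries at rate $O(1/q)$.
\end{itemize}
This yields the stated $\mathscr V_n$. Tightness of $M_n^{(1)}$ follows from a second-moment bound on increments $\bbe|M^{(1)}_n(z_1)-M^{(1)}_n(z_2)|^2/|z_1-z_2|^2\le K$, again via Lemma~\ref{quad_ineq}.

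\textbf{The mean (ii).} I expand $\bbe\tilde q_{jj}-\tilde t_j$ to second order in $\varepsilon_j$ and in the analogous row quantities. This produces the linear system $\psi=\mathbf A_n(z,z)\psi+\theta$, whose solvability follows from $\rho(\mathbf A_n)<1$. The source $\theta_j$ has three groups of terms:
\begin{itemize}
\item the $\tilde\nu_4/s$ fourth-moment group, which after scaling is of order $n/(q\sqrt p)\asymp\sqrt p/q$ and diverges;
\item the $\kappa U_j,\tilde U_i$ group and the $-(\kappa+2)$ group, both carrying the prefactor $q/\sqrt p\to0$. These vanish uniformly, so they are dropped, giving the reduced $\theta_j$ of the corollary.
\end{itemize}

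\textbf{Main obstacle.} The hardest step is showing that all remainders in the mean expansion are $o(1)$ after multiplication by $\sqrt p q$, even though the leading term is of size $\sqrt p/q$. Third-order terms such as $\bbe\varepsilon_j^3\tilde t_j^4$ contribute roughly $\sqrt p q\cdot n\cdot n^{-3}(n^3/q^4)/n=\sqrt p/q^3$, and these must vanish. This is exactly where $\phi>1/4$ enters: it gives $q^2\gg\sqrt p$, and also $q^4\gg n$, so that the next-order terms $\sqrt p q\cdot(1/q^2)^2\cdot(n/q^{?})$ are negligible.

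I would try to organize this bookkeeping with the bounds $\bbe|\varepsilon_j|^\ell\le K n^{-\ell/2}q^{-(\ell-2)}$ from Lemma~\ref{quad_ineq}. For every error I would verify that $\sqrt p q\cdot n\cdot(\text{error per }j)\cdot n^{-1}\to0$ under $q\ge n^{\phi}$, $\phi>1/4$. A second, related point is that fluctuations of $\Tr\mathbf\Sigma_j\mathbf Q_j/n$ around $\Tr\mathbf\Sigma_j\mathbf T/n$, which are of size $q^{-1}$ in the sparse regime, must be handled to second order rather than first.
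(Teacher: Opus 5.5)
Your architecture is the paper's. The paper runs this same martingale/resolvent scheme ($M_{n1}+M_{n2}$, martingale CLT, tightness, mean via $\psi=\mathbf A_n\psi+\theta$) over the range $n^{\phi}\le q\le n^{1/2}$, $\phi>1/4$, when proving Theorem~\ref{th1}. It then reads the corollary off from the three facts you isolate: $\frac{s(\kappa+1)}{p}\sum_j H_{jj}=O(s)$; $\tilde\nu_4-(\kappa+2)s\to\tilde\nu_4$; and the $q/\sqrt p$ groups of $\theta_j$ vanish while the $n\tilde\nu_4/(q\sqrt p)$ group moves into the centering.

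The gap is in the quantitative bookkeeping for the step you call the main obstacle.
\begin{itemize}
\item \textbf{Scaling.} Since $M_n^{(1)}=\frac{q}{\sqrt p}(\Tr\mathbf Q-\bbe\Tr\mathbf Q)$, each martingale increment carries the factor $q/\sqrt p$, not $\sqrt p\,q$. Your displayed Lindeberg expression equals $np^2(1+q^{-2})$ and diverges. The correct scaling gives $\sum_j\bbe|\tfrac{q}{\sqrt p}\varepsilon_j|^4=O(n/p^2)\to0$, for every $q$. The same slip produces your ``$pq^2/n^2$'' covariance scaling, which should be $q^2/(pn^2)=s/(pn)$; your final coefficients are nonetheless right.
\item \textbf{Moment bound.} The bound $\bbe|\varepsilon_j|^\ell\le Kn^{-\ell/2}q^{-(\ell-2)}$, which you plan to use for every remainder, is false and is not what Lemma~\ref{quad_ineq} gives. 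Dividing that lemma by $n^{\ell}$ yields $\bbe|\varepsilon_j|^\ell\le K(q^{-\ell}+q^{-(2\ell-2)})\asymp q^{-\ell}$. This is sharp at $\ell=2$: by Lemma~\ref{split}, the term $\frac{\tilde\nu_4}{sn^2}\sum_i\sigma_{ij}^4|[\mathbf Q_j]_{ii}|^2$ makes $\bbe|\varepsilon_j|^2$ of order $q^{-2}$, not $n^{-1}$. That is exactly the source of the divergent $\sqrt p/q$ mean you yourself identify. Your bound understates each remainder by a factor $q^2/n^{\ell/2}$, so a verification built on it is invalid and would never expose any condition on $\phi$.
\end{itemize}

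With the correct bound, the remainder after the second-order term in the expansion of $\tilde q_{jj}$, controlled through $\bbe|\varepsilon_j|^3\le Kq^{-3}$, contributes $\frac{q}{\sqrt p}\cdot n\cdot q^{-3}\asymp\sqrt n/q^2$ to $M_n^{(2)}$. This is the $O(\sqrt n/q^2)$ order that the paper's remark says forces $\phi>1/4$. Your $\sqrt p/q^3$ estimate for the signed third moment would need only $\phi>1/6$, so it does not account for the hypothesis. The ``$n/q^{?}$'' placeholder leaves the decisive term unidentified.

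You also omit the entry truncation $w_{ij}\mapsto w_{ij}I(|w_{ij}|<\delta_nq)$ with recentering. It is needed because tightness near the real axis and the higher-order remainders require Lemma~\ref{quad_ineq} with large $\ell$, that is, bounded $\nu'_{2\ell}$. Under only $8+\epsilon$ moments, only the truncation supplies this. It is also the other place where $\phi>1/4$ and the $8+\epsilon$ moments enter the paper's argument: the Lindeberg-type truncation condition in the appendix requires roughly $np/(\delta_nq)^{8+\epsilon}\to0$.
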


\begin{remark}
    Our analysis in the high-sparsity regime focuses on the range $1/4 < \phi < 1/2$. Within this range, the key simplification occurs as the $O(\sqrt{n}/q^{2})$ terms in the mean expansion become negligible, leading to the clean asymptotic form presented herein. For lower sparsity levels ($\phi \le 1/4$), the mean structure involves additional terms that substantially complicate the analysis; the current restriction thus serves to delineate a regime where a clear and concise result can be established.

    To further illustrate this point, we conduct a simulation at the boundary case $q \asymp n^{\phi}$ with $\phi=1/4$. The model follows the sparse MIMO channel  formulated in \eqref{eq:channel_model} of Section~4.2,  with the dimensions $(N_r, N_t)$ therein replaced by $(p, n)$. 
    The diagonal entries of deterministic matrices $\mathbf D$ and $\widetilde {\mathbf D}$ are generated as $d_i \sim \text{Uniform}(6,8)$ for $i=1,\ldots,p$ and $\tilde d_j \sim \text{Uniform}(6,8)$ for $j=1,\ldots,n$. The matrix $\mathbf X$ has i.i.d.\ entries $x_{ij} \sim \mathcal{CN}(0,1)$,  and $\sigma^2$ is set to 1. Throughout, we take $(p, n) = (5000, 10000)$ and average over $2,000$ independent channel realizations. The statistic considered is
    $\widetilde T_{\log}={q}/{\sqrt{n}}\bigl\{ C_{\mathbf H \mathbf H^{*}}(\sigma^{2}) - V(\sigma^{2}) \bigr\}-\mu_{\log}$, where $ C_{\mathbf H \mathbf H^{*}}(\sigma^{2})$ and $\mu_{\log}$ are defined in Theorem \ref{MI}.
    It is centered so that $\widetilde T_{\log}$ converges to $\mathcal N(0,\sigma_{\log}^{2})$.  For fixed $n$, reducing $q$ enhances the magnitude of the mean deviation. This trend is clearly visible when comparing the cases $q = 0.7 n^{1/4}$, $q = 0.6 n^{1/4}$, and  $q = 0.5 n^{1/4}$, where the smaller values of $q$ lead to more pronounced mean drift (Figure~\ref{fig:devation}). This behavior indicates that the $O(\sqrt{n}/q^{2})$ term remains non-negligible when $q \asymp n^{1/4}$. Its contribution is sufficiently large to distort the mean, which in turn motivates the restriction $1/4 < \phi < 1/2$, under which a clean asymptotic expression can be established.
    \begin{figure}[tp]
	\centering
    \subfigure[$q=0.7n^{1/4}$]{\includegraphics[width=0.32\linewidth]{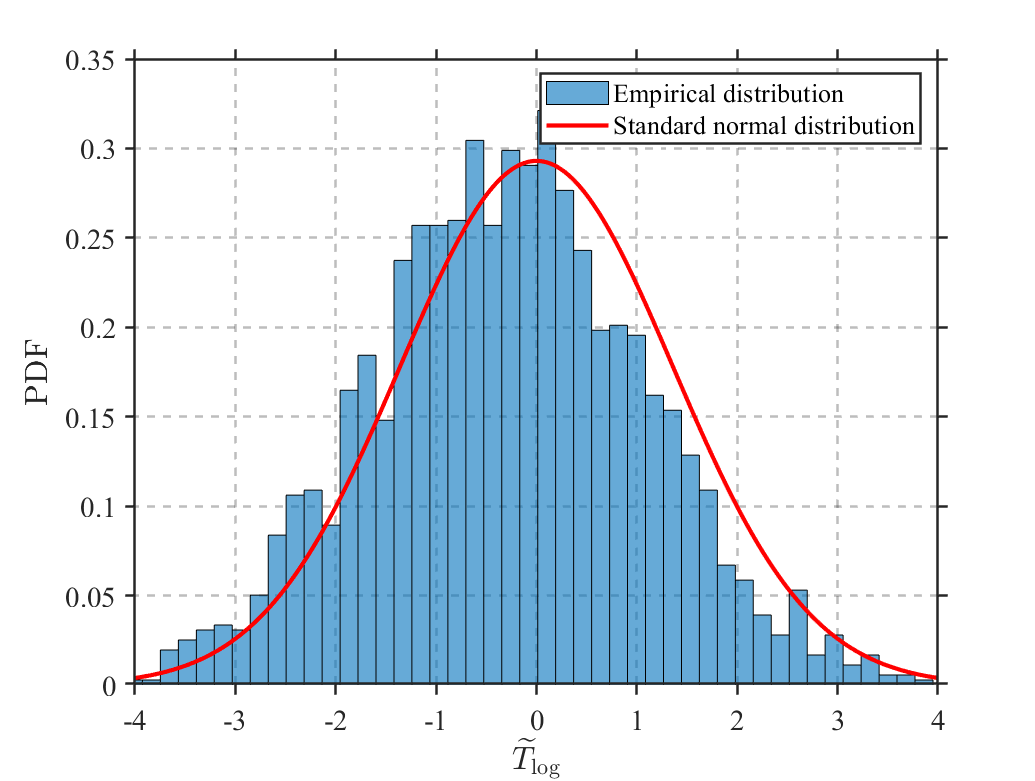}}
    \subfigure[ $q=0.6n^{1/4}$]{\includegraphics[width=0.32\linewidth]{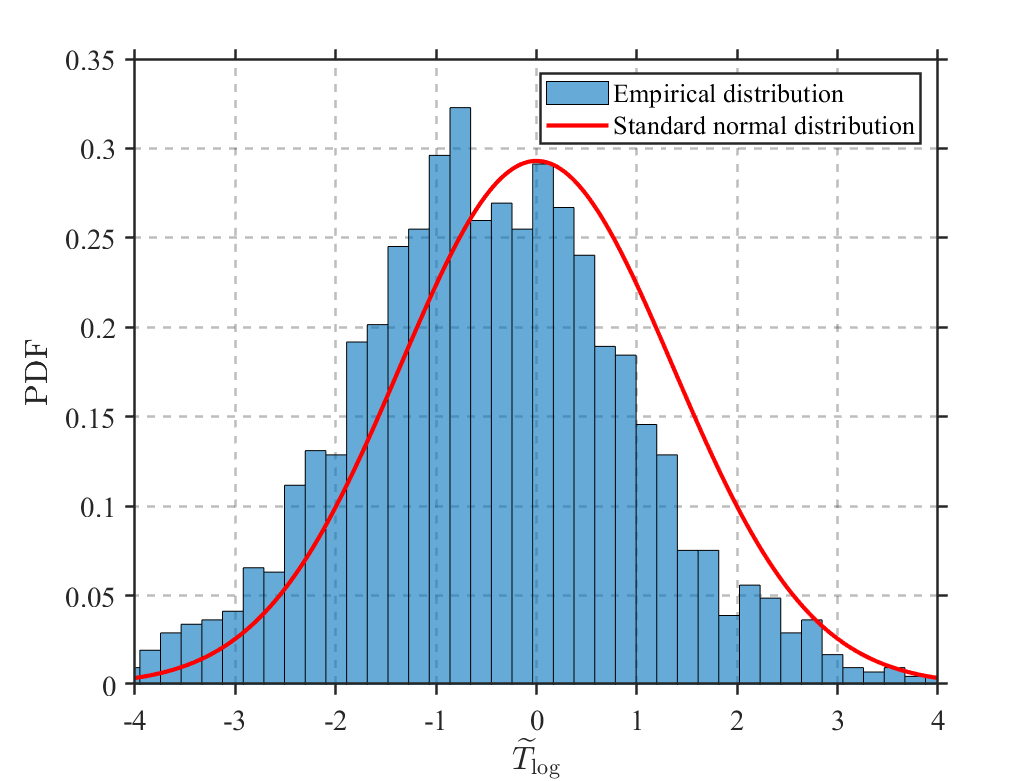}}
    \subfigure[ $q=0.5n^{1/4}$]{\includegraphics[width=0.32\linewidth]{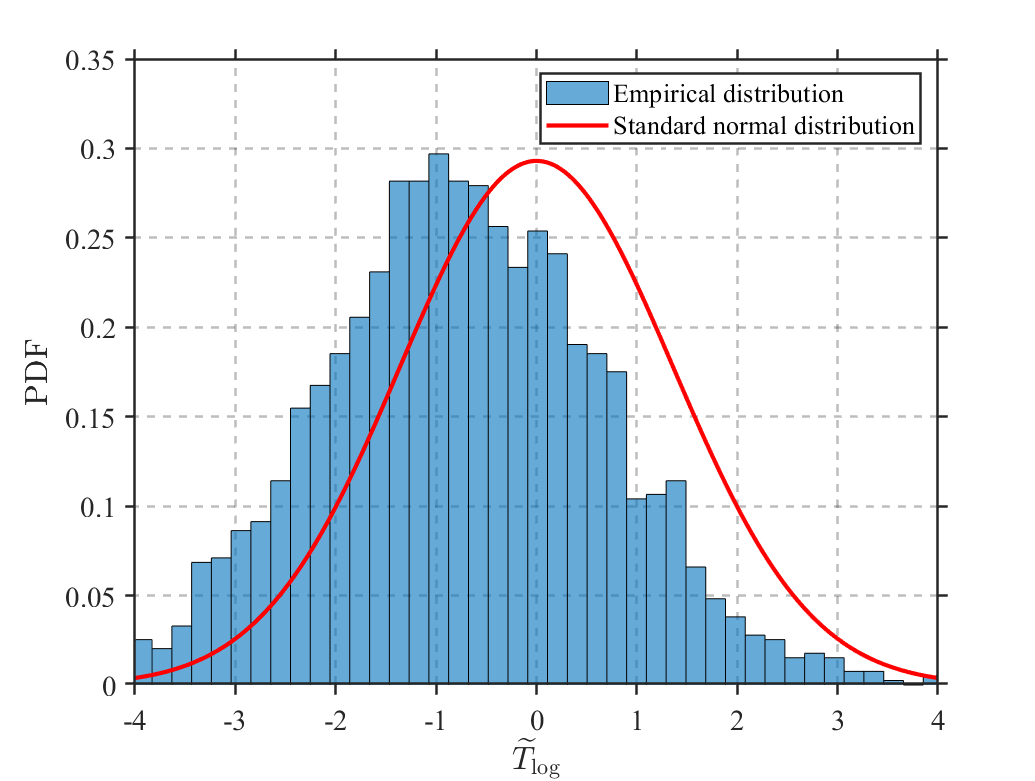}}
	\caption{Comparison of empirical and theoretical distributions of  $\tilde T_{\log}$ when $n=10000$.}
	\label{fig:devation}
\end{figure}

\end{remark}

	The combination of Theorem~\ref{th1} and Corollary~\ref{coro1} highlights two distinct phenomena concerning the asymptotic fluctuations of the LSS.  \emph{(1) Phase transition.}
	As the sparsity parameter $s$ decreases to zero, the limiting law in Corollary~\ref{coro1} emerges as a continuous extension of Theorem~\ref{th1}, exhibiting a clear phase transition between moderate- and high-sparsity.  
	While the LSD remains unchanged, the fluctuation structure shifts from the  moderate-sparsity regime to the high-sparsity regime, illustrating how sparsity affects the second-order limiting behavior.  \emph{(2) Fourth-moment-driven fluctuation.}
    In the high-sparsity regime, the asymptotic mean and variance are governed by the term originating from the fourth-moment structure of the entries, which appears with coefficient $\tilde\nu_4$ after taking the sparsity limit. While the resulting expressions still involve the diagonal second-moment
   structure through $\widetilde{ \mathbf\Sigma}_i^2$ and $\mathbf\Sigma_j^2$, the   components that are originally scaled by $s$ are suppressed. By contrast, under moderate sparsity, multiple fluctuation contributions coexist and jointly determine the asymptotic mean and variance. This highlights a distinct fluctuation regime induced by high sparsity.

Taken together, Theorem~\ref{th1} and Corollary~\ref{coro1} 
establish a unified CLT framework for LSS of large sparse Gram 
matrices. This framework encompasses a broad range of sparsity 
regimes, from moderate-sparsity to high-sparsity, and applies to both Gaussian and non-Gaussian distributions, as well as to  real and complex-valued entries. It thereby provides a comprehensive and robust foundation for analyzing second-order fluctuations in high-dimensional models.

\section{Applications}

\subsection{Application to equality test of two large-scale fading matrices}\label{appl1}

Consider a sparse MIMO communication system with $N_t$ transmit and $N_r$ receive antennas. The channel matrix $\mathbf{H} \in \mathbb{C}^{N_r \times N_t}$ is modeled as 
\begin{equation}
	\mathbf{H} = \frac{1}{\sqrt{N_t s}} (\mathbb{B} \circ \mathbf L)\circ \mathbf{G}, \label{mimo}
\end{equation}
where $\mathbb B=(b_{ij})$ has independent entries
$b_{ij}\sim{\rm Bernoulli}(1,s)$ with sparsity level
$s=q^2/N_t$. The matrix $\mathbf L=(l_{ij})$ contains the  large-scale fading coefficients, whose  slow variation reflects the macroscopic propagation environment.  The matrix $\mathbf G=(g_{ij})$ models i.i.d.\ small-scale fading, typically circular Gaussian $g_{ij}\sim \mathcal{CN}(0,1)$, though more general distributions with finite moments can also be accommodated.

Testing whether two scenarios share the same large-scale fading pattern provides a statistical means to detect environmental or topological changes, thereby supporting network reconfiguration and physical-layer authentication \cite{TMC2018, WCNC2020, TWC2023}.
Suppose there are two independent  high-dimensional channel matrices:
$$
\mathbf H_1 \in \mathbb{C}^{N_r \times N_t}, \quad \mathbf H_2 \in \mathbb{C}^{N_r \times N_t},
$$
whose entries are generated as
$$
h_{ij,1}=\frac{1}{\sqrt{N_t s}}b_{ij,1}l_{ij,1}g_{ij,1}, \quad h_{ij,2}=\frac{1}{\sqrt{N_t s}}b_{ij,2}l_{ij,2}g_{ij,2},
$$
where $b_{ij,1}$ and $b_{ij,2}$ are independent missing indicators, following $\text{Bernoulli}(1,s)$.

The objective is to test whether the large-scale fading matrices $\mathbf L_1$ and $\mathbf L_2$ are the same in two different scenarios, i.e.
$$
\mathcal H_0: \mathbf L_1=\mathbf L_2 \quad \text { vs } \quad \mathcal H_1: \mathbf L_1 \neq \mathbf L_2. 
$$
Define the Gram matrices  as
$$
\mathbf S_1=\mathbf H_1 \mathbf H_1^*, \quad \mathbf S_2= \mathbf H_2\mathbf H_2^*.
$$
Since under model~\eqref{mimo} one has
\[
\mathbb E\!\left[\operatorname{Tr}(\mathbf S_k)\right]
=\frac{1}{N_t}\operatorname{Tr}(\mathbf L_k\mathbf L_k^*), \qquad k=1,2,
\]
the trace difference $\operatorname{Tr}(\mathbf S_1)-\operatorname{Tr}(\mathbf S_2)$ 
provides an unbiased estimate of the large-scale energy gap 
$1/N_t\left[\operatorname{Tr}(\mathbf L_1\mathbf L_1^*)-\operatorname{Tr}(\mathbf L_2\mathbf L_2^*)\right]$.
Motivated by this observation, a natural choice is the linear test
function $f(x)=x$, for which the corresponding LSS reduces to the trace. Accordingly, we define the test statistic
\[
D_x=\frac{q}{\sqrt{N_r}}\left[\operatorname{Tr}(\mathbf S_1)-\operatorname{Tr}(\mathbf S_2)\right].
\]
The following theorem establishes the asymptotic null distribution of $D_x$.
\begin{theorem}\label{th4-1}
	Under $\mathcal H_0$, the standardized test statistic
	$$
	T_x=\frac{D_x-\mu_{H_0}}{\sqrt{2}\sigma_{H_0}} \xrightarrow{\mathcal D}\mathcal N(0,1),
	$$
	where $\mu_{H_0}=0$, and $$\sigma^2_{H_0}=\frac{1}{N_rN_t}\sum_{i,j}\bbe (l_{ij,1} g_{ij,1})^4-\frac{s}{N_rN_t}\sum_{i,j}l_{ij,1}^4.$$
\end{theorem}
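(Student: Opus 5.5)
The plan is to bypass the general machinery of Theorem~\ref{th1} and Corollary~\ref{coro1}. For the linear test function $f(x)=x$ the statistic is an explicit sum of independent scalars, so a direct Lyapunov CLT suffices. (The result is also consistent with specializing the covariance kernel $\mathscr V_n$ to $f=g=x$.)

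Write $w_{ij,k}=l_{ij,k}g_{ij,k}$ and $\sigma_{ij}^2=\bbe|w_{ij,k}|^2=l_{ij,k}^2$, where $\bbe|g|^2=1$. Then
\[
\operatorname{Tr}(\mathbf S_k)=\frac{1}{N_t s}\sum_{i,j} b_{ij,k}|w_{ij,k}|^2 .
\]
Since $\bbe\, b_{ij,k}|w_{ij,k}|^2=s\,l_{ij,k}^2$, we get $\bbe\operatorname{Tr}(\mathbf S_k)=N_t^{-1}\operatorname{Tr}(\mathbf L_k\mathbf L_k^*)$. Under $\mathcal H_0$ these means coincide, so $\bbe D_x=0$ and $\mu_{H_0}=0$. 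This reduces the problem to the centered variables
\[
D_x=\frac{q}{\sqrt{N_r}}\sum_{i,j}(\xi_{ij,1}-\xi_{ij,2}), \qquad \xi_{ij,k}=\frac{b_{ij,k}|w_{ij,k}|^2-s\,l_{ij,k}^2}{N_t s}.
\]
These are $2N_rN_t$ independent, mean-zero variables.

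\textbf{Variance.} We have
\[
\operatorname{var}\bigl(b|w|^2\bigr)=s\,\bbe|w|^4-s^2 l^4 .
\]
Using $q^2=N_t s$, this gives
\[
\frac{q^2}{N_r}\cdot\frac{1}{N_t^2s^2}\sum_{i,j}\bigl(s\,\bbe|w_{ij,1}|^4-s^2l_{ij,1}^4\bigr)=\frac{1}{N_rN_t}\sum_{i,j}\bigl(\bbe(l_{ij,1}g_{ij,1})^4-s\,l_{ij,1}^4\bigr)=\sigma_{H_0}^2 .
\]
The second sample, under $\mathcal H_0$, has the same variance because $\mathbf L_2=\mathbf L_1$ and $g_{ij,2}\overset{d}{=}g_{ij,1}$. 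Hence $\operatorname{var}(D_x)=2\sigma_{H_0}^2$.

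\textbf{Nondegeneracy.} Next I check that $\sigma_{H_0}^2$ stays bounded away from zero. By Jensen,
\[
\bbe|w|^4-s\,l^4\ \ge\ \bigl(\bbe|w|^4-l^4\bigr)+(1-s)l^4 .
\]
For circular Gaussian $g$ one has $\bbe|g|^4=2$, so each summand is at least $l_{ij}^4$. Then Cauchy--Schwarz and Assumption~\ref{ass3} give
\[
\frac{1}{N_rN_t}\sum_{i,j} l_{ij}^4\ \ge\ \Bigl(\frac{1}{N_rN_t}\sum_{i,j} l_{ij}^2\Bigr)^2\ \ge\ K>0 .
\]
For non-Gaussian $g$, I would instead assume non-degeneracy of the fourth cumulant, or use $1-s\ge K$ in the high-sparsity regime.

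\textbf{Lyapunov condition.} This is the main quantitative step, because sparsity inflates the tails of $\xi_{ij,k}$. Take $\delta\in(0,2]$, which is allowed by the $8+\epsilon$ moment assumption. Then
\[
\Bigl(\frac{q}{\sqrt{N_r}}\Bigr)^{2+\delta}\bbe|\xi_{ij,k}|^{2+\delta}\le K\Bigl(\frac{q}{\sqrt{N_r}\,N_t s}\Bigr)^{2+\delta}\bigl(s+s^{2+\delta}\bigr)\le K\,s\,(\sqrt{N_r}\,q)^{-(2+\delta)},
\]
using $N_t s=q^2$. Summing over the $2N_rN_t$ terms gives a bound of order
\[
N_rN_ts\,(N_rq^2)^{-1-\delta/2}=O\bigl((N_rq^2)^{-\delta/2}\bigr)\to 0 .
\]
This holds uniformly for $q\ge N_t^{\phi}$, and in particular covers both sparsity regimes.

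\textbf{Conclusion.} The Lyapunov CLT, combined with the variance lower bound, gives $D_x/(\sqrt2\,\sigma_{H_0})\xrightarrow{\mathcal D}\mathcal N(0,1)$, which is exactly the claim for $T_x$.
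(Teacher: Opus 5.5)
Your proof is correct, and it takes a different route from the paper. The paper reads Theorem~\ref{th4-1} off its general LSS CLT (Theorem~\ref{th1}/Corollary~\ref{coro1}) with $f(x)=x$, applied to the two independent Gram matrices:
\begin{itemize}
\item under $\mathcal H_0$ the two centering terms coincide and cancel;
\item $\sigma_{H_0}^2$ comes from evaluating the contour integral of $\mathscr V_n$ at $f=g=x$, where the $(\kappa+1)s$ and $\tilde\nu_4-(\kappa+2)s$ contributions combine to $\tilde\nu_4-s$.
\end{itemize}
That route inherits the hypotheses of the general theorem ($8+\epsilon$ moments, Assumption~\ref{ass3}, $\phi>1/4$). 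This is why the paper needs a separate remark arguing that the restriction on $\phi$ can be dropped under $\mathcal H_0$.

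You instead use the fact that $\operatorname{Tr}\mathbf S_k$ is a sum of independent scalars. The mean and variance are then exact at every $n$, and Lyapunov's CLT finishes the argument. For this statistic your route is both simpler and stronger:
\begin{itemize}
\item it needs no resolvents or deterministic equivalents;
\item it uses only $4+2\delta$ moments;
\item your bound $O((N_rq^2)^{-\delta/2})$ gives the result whenever $N_rq^2\to\infty$, which proves the paper's remark outright.
\end{itemize}
The same computation, with the mean $q(\sqrt{N_r}N_t)^{-1}\sum_{i,j}(l_{ij,1}^2-l_{ij,2}^2)$ subtracted, also gives Theorem~\ref{power}. What the paper's machinery buys is generality. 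It handles nonlinear test functions such as $\log(1+x/\sigma^2)$ in Theorem~\ref{MI}, where no decomposition into independent summands exists.

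One small correction concerns your non-degeneracy step. For general $g$ the condition you need is $\liminf_n (N_rN_t)^{-1}\sum_{i,j}(\bbe|w_{ij}|^4-s\,l_{ij}^4)>0$, not a non-vanishing fourth cumulant. Your display $\bbe|w|^4-s\,l^4=(\bbe|w|^4-l^4)+(1-s)l^4$ is an identity, and Jensen makes the first bracket nonnegative. So the condition holds automatically whenever $s\le 1-K$. Only at or near $s=1$ do you need $\operatorname{var}|g_{ij}|^2$ not to vanish on average. For example, Rademacher $g$ has fourth cumulant $-2\neq 0$, yet at $s=1$ it makes $\operatorname{Tr}\mathbf S_k$ deterministic. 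The paper's statement also tacitly assumes $\sigma_{H_0}$ is bounded away from zero.
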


\begin{remark}
 The  quantity $1/(N_rN_t)\sum_{i,j}\bbe (l_{ij,1} g_{ij,1})^4$ can be consistently estimated by  $sN_t/N_r\sum_{i,j}h_{ij,1}^4$, which is unbiased (using $\hat s=1/(N_rN_t) \sum_{i,j} I (h_{ij,1}\neq 0)$ if $s$ is unknown). Assuming that 
\(\mathbb{E}(g_{ij,1}^4) = \tilde{\nu}_4\), the second term, ${s}/({N_rN_t})\sum_{i,j}l_{ij,1}^4$,  can be estimated analogously by $sN_t/(N_r\tilde{\nu}_4)\sum_{i,j}h_{ij,1}^4$.

Moreover, the technical restriction $\phi>1/4$ in Theorem~\ref{th1}, 
imposed to control an additional term of order $O(\sqrt{N_t}/q^2)$ in the mean expansion, 
can be removed under $\mathcal{H}_0$. 
In this case, since the mean is identically zero, the potentially divergent term no longer arises, and the CLT in Theorem~\ref{th4-1} remains valid for all 
$N_t^{\phi}\le q \le N_t^{1/2}$ with $\phi>0$.
\end{remark}


Theorem~\ref{th4-1} characterizes the null distribution of the test statistic,  thereby ensuring valid control of the type I error. Given the significance level $\alpha$, the null hypothesis is rejected when
$$
\begin{aligned}
	\left\{D_x>\mu_{H_0}+\sqrt{2}\sigma_{H_0}z_{1-\alpha}\right\}, 
\end{aligned}
$$
where $z_{1-\alpha}$ denotes the $1-\alpha$ quantile of the standard normal distribution.
To evaluate the power of the test, it remains to determine the distribution of $D_x$ under the alternative hypothesis $\mathcal{H}_1:\mathbf{L}_1 \neq \mathbf{L}_2$. In this setting, both the centering and the variance of $D_x$ shift by quantities that depend on the deviation between $\mathbf{L}_1$ and $\mathbf{L}_2$. The next theorem establishes the corresponding CLT under $\mathcal{H}_1$, from which the asymptotic power function follows.
\begin{theorem}\label{power}
	Under $\mathcal H_1$, we have
	\begin{align*}
    \frac{{q}/{\sqrt{N_r}}\left[\operatorname{Tr}(\mathbf S_1)-\operatorname{Tr}(\mathbf S_2)\right]-q/(\sqrt{N_r}N_t)\sum_{i,j}(l_{ij,1}^2-l_{ij,2}^2)-\mu_{H_1}}{\sigma_{H_1}} \xrightarrow{\mathcal D} \mathcal N(0,1),
	\end{align*}
	where $\mu_{H_1}=0$ and $$\sigma_{H_1}^2=\frac{1}{N_rN_t}\sum_{i,j}\left[\bbe (l_{ij,1}g_{ij,1})^4+\bbe (l_{ij,2}g_{ij,2})^4\right]-\frac{s}{N_rN_t}\sum_{i,j}(l_{ij,1}^4+l_{ij,2}^4).$$
	The asymptotic power function is therefore given by
	\begin{align*}
		Power=1-\Phi\left(\frac{ \sqrt{2}z_{1-\alpha}\sigma_{H_0}+\mu_{H_0}-q/(\sqrt{N_r}N_t)\sum_{i,j}(l_{ij,1}^2-l_{ij,2}^2)-\mu_{H_1}}{\sigma_{H_1}} \right).
	\end{align*}
\end{theorem}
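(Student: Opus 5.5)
Since $f(x)=x$, the statistic is linear in the squared entries:
\[
\operatorname{Tr}(\mathbf S_k)=\sum_{i,j}|h_{ij,k}|^2=\frac{1}{N_t s}\sum_{i,j} b_{ij,k}\,l_{ij,k}^2\,|g_{ij,k}|^2 ,\qquad k=1,2.
\]
It is therefore a sum of independent (non-identically distributed) random variables, and the plan is to apply the Lyapunov CLT directly to this sum rather than run the full resolvent machinery of Theorem~\ref{th1} or Corollary~\ref{coro1}. Under the normalization $\mathbb E|g_{ij}|^2=1$, each summand has mean $l_{ij,k}^2/(N_t)$ because $\mathbb E b=s$. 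Hence
\[
\mathbb E\!\left[\tfrac{q}{\sqrt{N_r}}\bigl(\operatorname{Tr}\mathbf S_1-\operatorname{Tr}\mathbf S_2\bigr)\right]=\frac{q}{\sqrt{N_r}N_t}\sum_{i,j}\bigl(l_{ij,1}^2-l_{ij,2}^2\bigr).
\]
This is exactly the centering in the display, so $\mu_{H_1}=0$.

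For the variance, write $Z_{ij,k}=b_{ij,k}l_{ij,k}^2|g_{ij,k}|^2$. Then $\mathbb E Z^2=s\,\mathbb E(l g)^4$ (with $l$ deterministic, $\mathbb E(lg)^4=l^4\mathbb E|g|^4$) and $(\mathbb E Z)^2=s^2l^4$. Multiplying by $q^2/(N_rN_t^2s^2)$ and using $q^2=sN_t$ gives
\[
\operatorname{var}\!\left(\frac{q}{\sqrt{N_r}}\operatorname{Tr}\mathbf S_k\right)=\frac{1}{N_rN_t}\sum_{i,j}\Bigl[\mathbb E(l_{ij,k}g_{ij,k})^4-s\,l_{ij,k}^4\Bigr].
\]
Independence of $\mathbf H_1$ and $\mathbf H_2$ then yields $\sigma_{H_1}^2$ as the sum of the two such variances.

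It remains to verify the Lyapunov condition with exponent $2+\delta$, taking $\delta$ with $4+2\delta\le 8+\epsilon$ (e.g.\ $\delta=2$, which uses the eighth moment assumed in Assumption~\ref{ass1}). Each normalized summand $\xi=\frac{q}{\sqrt{N_r}N_ts}(Z-\mathbb E Z)$ satisfies
\[
\mathbb E|\xi|^{4}\le K\,\frac{q^4}{N_r^2N_t^4s^4}\,s=K\,\frac{1}{N_r^2 s}.
\]
Summing over the $N_rN_t$ entries gives $K N_t/(N_r s)=K N_t^2/(N_r q^2)\asymp N_t/q^2$. This does \emph{not} vanish for $q\le N_t^{1/2}$, so the naive fourth-moment Lyapunov bound fails, and this is the main obstacle. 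I would instead use the Lindeberg condition with truncation. The relevant summand is $\xi\approx \frac{b\,l^2|g|^2}{\sqrt{N_rN_t s}}$, whose size when $b=1$ is of order $(N_rN_t s)^{-1/2}=(N_r q^2)^{-1/2}\to0$ since $q\ge N_t^{\phi}$. Thus for fixed $\eta>0$, $|\xi|>\eta$ forces $|g|^2\gtrsim \eta\sqrt{N_r}q$. By Markov with the $(8+\epsilon)$ moment, the Lindeberg sum is then bounded by
\[
\sum_{i,j}\mathbb E\bigl[\xi^2 I(|\xi|>\eta)\bigr]\le \frac{1}{N_rN_t}\sum_{i,j}\mathbb E\bigl[|g|^4 I\bigl(|g|^2>\eta\sqrt{N_r}q\bigr)\bigr]\to0.
\]
The point is that sparsity shifts mass onto rare large values but does not produce heavy tails of $\xi$ itself. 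Together with Assumption~\ref{ass3}-type nondegeneracy ensuring $\sigma_{H_1}^2\ge c>0$ (note $\mathbb E|g|^4\ge1>s$), Lindeberg--Feller gives the CLT.

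The power formula follows by standard manipulation. The rejection event $\{D_x>\sqrt2\sigma_{H_0}z_{1-\alpha}+\mu_{H_0}\}$ is recentred by the deterministic shift and rescaled by $\sigma_{H_1}$, after which the CLT and continuity of $\Phi$ give the stated asymptotic power.
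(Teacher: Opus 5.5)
Your argument is correct, but it takes a different route from the paper. The paper treats Theorem~\ref{power} as a specialization of its general LSS CLT. With $f(x)=x$ the LSS is the trace, so one applies Theorem~\ref{th1} (or Corollary~\ref{coro1}) to $\mathbf S_1$ and $\mathbf S_2$ separately, with variance profile $\sigma_{ij}^2=l_{ij,k}^2$. The mean and covariance formulas are then evaluated at $f(z)=z$: the $s(\kappa+1)$ and $\tilde\nu_4-(\kappa+2)s$ pieces of $\mathscr V_n$ combine into the coefficient $\tilde\nu_4-s$, i.e.\ $\mathbb E|lg|^4-sl^4$, consistent with Lemma~\ref{split} at $\mathbf A=\mathbf B=\mathbf I$. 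Finally the two independent limits are added. You bypass all of this by observing that $\operatorname{Tr}\mathbf S_k$ is a sum of independent entries and applying Lindeberg--Feller directly.

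Your route has several advantages. The mean and variance are exact at finite $n$, so $\mu_{H_1}=0$ holds identically and no contour integrals are needed. You only use uniform integrability of $|g_{ij}|^4$, not $8+\epsilon$ moments and the martingale/resolvent machinery. And your argument works for every $\phi>0$ (indeed whenever $N_rq^2\to\infty$). That is exactly what justifies the paper's remark that the $\phi$-restriction can be dropped for $f(x)=x$; the general theorem, stated only for fixed $s$ or $1/4<\phi<1/2$, does not deliver this by itself. What the paper's route offers in return is uniformity: the same mean and covariance formulas serve for every analytic $f$, e.g.\ the $\log$ statistic of Theorem~\ref{MI}.

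Two corrections. First, your Lyapunov computation has an arithmetic slip. Since $q^4=s^2N_t^2$, you get $q^4s/(N_r^2N_t^4s^4)=1/(N_r^2N_t^2s)$, not $1/(N_r^2s)$. The Lyapunov sum is therefore of order $N_rN_t/(N_r^2N_t^2s)=1/(N_rq^2)\to0$. Your own heuristic already indicates this: a summand of size $(N_rq^2)^{-1/2}$ present with probability $s$. So the ``main obstacle'' is illusory, and Lyapunov with $\delta=2$ works under the eighth-moment assumption; your Lindeberg argument is also fine and uses fewer moments. Second, the recentred rejection threshold depends on $n$, and the shift may diverge. You should therefore pass to the power formula via uniform convergence of the distribution functions to $\Phi$ (P\'olya's theorem), not just continuity of $\Phi$.
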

\begin{remark}
    Two observations apply under the alternative hypothesis.  
	First, the terms $1/(N_rN_t)\sum_{i,j} \bbe (l_{ij,k}g_{ij,k})^4$ and $s/(N_rN_t)\sum_{i,j} l_{ij,k}^2, k=1,2$ appearing in Theorem \ref{power} can be consistently estimated in the same manner as in Theorem~\ref{th4-1}. 
	Second, under $\mathcal{H}_1$, the restriction on $\phi$ can be relaxed as well,
	since for the  test function $f(x)=x$, the CLT remains valid for all $N_t^{\phi} \le q \le N_t^{1/2}$ with $\phi>0$.
\end{remark}

\subsection{Application to outage probability analysis in  sparse MIMO channels}\label{appl2}


Consider a point-to-point sparse MIMO system with $N_t$ antennas at the transmitter and $N_r$ antennas at the receiver. 
The received signal $\mathbf{y} \in \mathbb{C}^{N_r}$ can be given by
\begin{equation}
	\mathbf{y} = \mathbf{H}\mathbf{s} + \mathbf{n},
	\label{eq:system_model}
\end{equation}
where $\mathbf{s} \in \mathbb{C}^{N_t}$ represents the transmitted signal, $\mathbf{H}$ denotes the $N_r \times N_t$ sparse channel matrix, 
and $\mathbf{n}$ is the additive white Gaussian noise, whose entries are i.i.d. circular Gaussian random variables 
with variance $\sigma^{2}$, i.e., $\mathbb{E}(\mathbf{n}\mathbf{n}^{*}) = \sigma^{2}\mathbf{I}$.
The channel matrix $\mathbf{H}$ is given by
\begin{equation}
	\mathbf{H} 
	=\frac{1}{\sqrt{N_t s}}\mathbb B \circ (\mathbf{D}^{1/2}\mathbf{X}\widetilde{\mathbf{D}}^{1/2}),
	\label{eq:channel_model}
\end{equation}
where $\mathbb B=(b_{ij})$ has independent entries
$b_{ij}\sim{\rm Bernoulli}(1,s)$ with sparsity level
$s=q^2/N_t$. Moreover, $\mathbf{D}$ and $\widetilde{\mathbf{D}}$ are the receive and transmit correlation matrices, which are deterministic diagonal matrices and defined respectively as
\[
\mathbf{D} = \mathrm{diag}(d_1, d_2, \ldots, d_{N_r}), \quad
\widetilde{\mathbf{D}} = \mathrm{diag}(\tilde{d}_1, \tilde{d}_2, \ldots, \tilde{d}_{N_t}).
\]
The random matrix $\mathbf{X}$ consists of i.i.d. complex Gaussian entries with zero mean and unit variance. Then, $\tilde \nu_4=\bbe |x_{ij}|^4=2$.

Under the assumption $\mathbb{E}(\mathbf{s}\mathbf{s}^{*})= \mathbf{I}$, 
the mutual information of the considered MIMO system is given by
\begin{equation}
	C_{\mathbf{H}\mathbf{H}^{*}}(\sigma^{2}) 
	= \log \det \left( 
	\mathbf{I} + \frac{\mathbf{H}\mathbf{H}^{*}}{\sigma^{2}}
	\right),
	\label{eq:MI_expression}
\end{equation}
which is a random variable due to the randomness of the channel matrix $\mathbf{H}$. 
This randomness motivates us to investigate  the fluctuation of 
$C_{\mathbf{H}\mathbf{H}^{*}}(\sigma^{2})$, which can be used to compute  outage probability.

To develop the asymptotic theory of the mutual information, we first introduce some intermediate notations.
Given $z \in \mathbb{C} \setminus \mathbb{R}^{+}$, for $(\delta(z), \tilde{\delta}(z))$  and matrices 
$\mathbf{T}(z), \widetilde{\mathbf{T}}(z)$
satisfying the following system of equations according to Theorem \ref{first-order}:
\begin{equation*}
\left\{
\begin{aligned}
	\delta(z) &= \frac{1}{N_t}\operatorname{Tr}\!\left(\mathbf{D}\mathbf{T}(z)\right), \\
	\tilde{\delta}(z) &= \frac{1}{N_t}\operatorname{Tr}\!\left(\widetilde{\mathbf{D}}\widetilde{\mathbf{T}}(z)\right), \\
	\mathbf{T}(z) &= 
	\Big( -z\!\left(\mathbf{I}+\tilde{\delta}(z)\mathbf{D}\right), \\
	\widetilde{\mathbf{T}}(z) &= 
	\Big( -z\!\left(\mathbf{I}+\delta(z)\widetilde{\mathbf{D}}\right).
\end{aligned}
\right.
\end{equation*}
The following CLT characterizes the asymptotic distribution of the mutual information in \eqref{eq:MI_expression}. The proof builds on arguments developed in \cite{Bias2022}, with suitable modifications to accommodate sparsity.
\begin{theorem}\label{MI}
	Let $z = -\sigma^{2}$ and $1/4 < \phi \leq 1/2$.
	The CLT for the mutual information, $C_{\mathbf{H}\mathbf{H}^{*}}(\sigma^{2})$, can be expressed as
	\begin{equation*}
		T_{\log}=\frac{	q/\sqrt{N_r}\left[C_{\mathbf{H}\mathbf{H}^{*}}(\sigma^{2})-V(\sigma^2)\right]-\mu_{\log}}{\sigma_{\log}}
		 \xrightarrow{\mathcal D} \mathcal{N}(0, 1),
	\end{equation*}
	where
	\begin{equation*}
		V(\sigma^2) = -\log\det(\sigma^{2}\mathbf{T}(z))
		+ \log\det\!\left(\mathbf{I} + \delta(z)\widetilde{\mathbf{D}}\right)
		- N_t\sigma^{2}\delta(z)\tilde{\delta}(z),
	\end{equation*}
	\begin{align}
		\mu_{\log} 
		&= - \frac{\left(\tilde \nu_4-(\kappa+2)s\right)\sigma^{4}}{2q\sqrt{
		N_r}N_t}
		\operatorname{Tr}\!\left(\mathbf{D}^{2}\mathbf{T}^{2}(z)\right)
		\operatorname{Tr}\!\left(\widetilde{\mathbf{D}}^{2}\widetilde{\mathbf{T}}^{2}(z)\right),
		\nonumber 
	\end{align}
	and
	\begin{align}
		\sigma_{\log}=&\frac{\left(\tilde \nu_4-(\kappa+2)s\right)\sigma^{4}}{N_rN_t}
		\operatorname{Tr}\!\left(\mathbf{D}^{2}\mathbf{T}^{2}(z)\right)
		\operatorname{Tr}\!\left(\widetilde{\mathbf{D}}^{2}\widetilde{\mathbf{T}}^{2}(z)\right)\\ &-\frac{q^2}{N_r}\log\left(1-\frac{z^2}{N_t^2}\operatorname{Tr}(\mathbf D^2 \mathbf T^2(z))\operatorname{Tr}\left(\widetilde{\mathbf{D}}^2\widetilde{\mathbf{T}}^2(z)\right)\right).
		\nonumber
	\end{align}
\end{theorem}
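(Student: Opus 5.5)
We would obtain Theorem~\ref{MI} as a specialization of the general CLT (Theorem~\ref{th1} when $s$ is fixed, Corollary~\ref{coro1} when $s\to0$) to the test function $f(x)=\log(1+x/\sigma^2)$ and the separable variance profile $\sigma_{ij}^2=d_i\tilde d_j$, with $p=N_r$, $n=N_t$. Since $f$ is analytic on a neighbourhood of $[0,\sigma_{\max}^2(1+\sqrt c)^2]$ and Lemma~\ref{norm} keeps $\|\mathbf S_n\|$ inside any contour enclosing this interval with high probability, the contour representation applies. We then evaluate the mean and covariance integrals in closed form. Rather than doing this by residues, we plan to follow \cite{Bias2022} and work directly on the real axis: $C_{\mathbf H\mathbf H^*}(\sigma^2)=\int_{\sigma^2}^\infty\bigl(\frac{N_r}{\omega}-\operatorname{Tr}\mathbf Q(-\omega)\bigr)\,\mathrm d\omega$. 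This reduces everything to the random quantity $\operatorname{Tr}\mathbf Q(-\omega)$ and its deterministic equivalent $\operatorname{Tr}\mathbf T(-\omega)$.

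\textbf{Step 1 (first order).} Because the profile is separable, the system \eqref{equation} collapses to the two scalar unknowns $(\delta,\tilde\delta)$:
\[
t_i=-1/\bigl(z(1+\tilde\delta d_i)\bigr),\qquad \tilde t_j=-1/\bigl(z(1+\delta\tilde d_j)\bigr).
\]
Integrating $N_r/\omega-\operatorname{Tr}\mathbf T(-\omega)$ from $\sigma^2$ to $\infty$ gives the standard closed form $V(\sigma^2)$. To check it, we differentiate $V$ in $\sigma^2$; the terms involving $\partial_{\sigma^2}\delta$ and $\partial_{\sigma^2}\tilde\delta$ cancel by the fixed-point equations.

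\textbf{Step 2 (variance).} In $\mathscr V_n$, both the $H_{jj}$ (martingale/second-moment) part and the $\tilde\nu_4$ part reduce to rank-one structures, since $a_{lm}(z_1,z_2)\propto \tilde d_l\tilde d_m$. The Neumann series for $H$ therefore sums to a geometric series in
\[
\gamma(z_1,z_2)=\frac{z_1z_2}{N_t^2}\operatorname{Tr}\bigl(\mathbf D^2\mathbf T(z_1)\mathbf T(z_2)\bigr)\operatorname{Tr}\bigl(\widetilde{\mathbf D}^2\widetilde{\mathbf T}(z_1)\widetilde{\mathbf T}(z_2)\bigr).
\]
Integrating the mixed derivative against $\log$ in $\omega_1$ and $\omega_2$ then gives $-\log(1-\gamma)$ on the diagonal $z_1=z_2=-\sigma^2$. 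This is multiplied by the prefactor $s(\kappa+1)\cdot n/p=q^2/N_r$, with $\kappa=0$ in the complex case. The fourth-moment part factorizes directly into $(\tilde\nu_4-(\kappa+2)s)\sigma^4\operatorname{Tr}(\mathbf D^2\mathbf T^2)\operatorname{Tr}(\widetilde{\mathbf D}^2\widetilde{\mathbf T}^2)/(N_rN_t)$. Together these produce $\sigma_{\log}$ as stated; the statement's $\sigma_{\log}$ is interpreted as the variance.

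\textbf{Step 3 (mean).} We compute $\bbe\operatorname{Tr}\mathbf Q(-\omega)-\operatorname{Tr}\mathbf T(-\omega)$ up to $o(\sqrt{N_r}/q)$ after scaling. For this we use the Gaussian integration-by-parts and Poincaré–Nash approach of \cite{Bias2022}, applied conditionally on $\mathbb B$. The sparse Bernoulli mask is treated through the cumulant expansion, in which Lemma~\ref{split} identifies the $\tilde\nu_4/s$ correction. The remaining higher cumulants contribute $O(\sqrt{N_t}/q^2)$ after normalization, and these vanish precisely when $\phi>1/4$. The surviving bias term is $-\frac{\partial}{\partial\omega}$ of a product of traces; integrating it over $\omega\in[\sigma^2,\infty)$ yields $\mu_{\log}$. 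For complex Gaussian $\mathbf X$, the $\kappa$-terms of $\theta_j$ vanish.

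\textbf{Main obstacle.} The hardest step is this bias computation: controlling the third- and higher-order cumulant remainders of $b_{ij}x_{ij}/\sqrt s$, whose $\ell$-th cumulant scales like $s^{1-\ell/2}$. These must be bounded uniformly in $\omega$ over the unbounded interval so that the $\omega$-integral converges. We would use Lemma~\ref{quad_ineq} together with the resolvent bound $\|\mathbf Q(-\omega)\|\le\omega^{-1}$ to obtain integrable $O(\omega^{-2})$ tails.
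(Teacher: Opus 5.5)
Your overall plan is consistent with the route the paper indicates, which is to adapt \cite{Bias2022} to the sparse setting. You specialize Theorem~\ref{th1} and Corollary~\ref{coro1} to $f(x)=\log(1+x/\sigma^2)$ with the separable profile $\sigma_{ij}^2=d_i\tilde d_j$. You then evaluate the mean and variance on the real axis via $C_{\mathbf H\mathbf H^*}(\sigma^2)=\int_{\sigma^2}^\infty(N_r/\omega-\operatorname{Tr}\mathbf Q(-\omega))\,\mathrm d\omega$. Your target formulas are right, including reading the displayed $\sigma_{\log}$ as a variance.

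However, the mechanism you give for the logarithm in Step~2 is wrong. Integrating $\partial_{z_1}\partial_{z_2}\Phi$ over $(\omega_1,\omega_2)\in[\sigma^2,\infty)^2$ just returns $\Phi(-\sigma^2,-\sigma^2)$, because $\Phi\to0$ at infinity. The same holds for the equivalent double contour integral against $\log(1+z_1/\sigma^2)\log(1+z_2/\sigma^2)$. No logarithm is created there. So if $\frac{1}{p}\sum_j H_{jj}$ really summed to a geometric series in the full $\gamma$, you would end up with $\frac{q^2}{N_r}\frac{\gamma}{1-\gamma}$ instead of $-\frac{q^2}{N_r}\log(1-\gamma)$.

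The logarithm comes from the lower-triangular form of the $H$-system: the sum runs only over $i\le j-1$, which encodes the martingale ordering. Since $a_{\ell i}(z_1,z_2)=\frac{z_1z_2}{N_t^2}\tilde d_\ell\tilde t_\ell(z_1)\tilde t_\ell(z_2)\tilde d_i\operatorname{Tr}(\mathbf D^2\mathbf T(z_1)\mathbf T(z_2))$ is rank one, the truncated system gives $H_{jj}=N_t a_{jj}/(1-\gamma_j)$. Here the $\gamma_j=\sum_{i<j}a_{ii}(z_1,z_2)$ are \emph{partial} traces. Then $\sum_j a_{jj}/(1-\gamma_j)$ is a Riemann sum for $-\log(1-\gamma)$, because $a_{jj}=O(1/N_t)$ and $\sum_j a_{jj}=\gamma(z_1,z_2)$. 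With this correction, the prefactor $s(\kappa+1)N_t/N_r=q^2/N_r$ together with the factorized $\tilde\nu_4$ term gives exactly the stated variance.

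There are two smaller points in Step~3. First, the needed precision is $o(\sqrt{N_r}/q)$ \emph{before} multiplying by $q/\sqrt{N_r}$, i.e.\ $o(1)$ after scaling. An $o(\sqrt{N_r}/q)$ error after scaling would not suffice, since the leading bias is itself of that order after scaling. Your later $O(\sqrt{N_t}/q^2)$ remainder claim shows you have the right precision in mind, but the sentence as written is off.

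Second, once you invoke Corollary~\ref{coro1}, the mean is already supplied by $\mathscr E_n$. Because $\mathbf A_n(z,z)$ is rank one, $\psi=(\mathbf I-\mathbf A_n(z,z))^{-1}\theta$ is explicit, and the resulting contour integral can be evaluated by the same branch-cut/real-axis device. So re-running the cumulant expansion (your ``main obstacle'') is only needed if you want a self-contained derivation in the style of \cite{Bias2022}. If you do re-run it, expand in the unconditional entries $b_{ij}w_{ij}/\sqrt s$. Conditioning on $\mathbb B$ instead leaves a Gaussian model with a random profile of size up to $1/s$, and the $\tilde\nu_4/s$ term then appears only after averaging over $\mathbb B$.
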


For a target rate $R$,
the outage probability
$P_{\rm out}(R)=\mathbb P\big(C_{\mathbf{H}\mathbf{H}^{*}}(\sigma^{2})<R\big)$
admits the Gaussian approximation
\[
P_{\rm out}(R,\rho)\ \approx\
\Phi\!\left(
\frac{\displaystyle {q}/{\sqrt{N_r}}
	\left(R-V(\sigma^2)\right)
	-\mu_{\log}}{\sigma_{\log}}
\right),
\]
where $\Phi$ is the standard normal Cumulative Distribution Function.


\section{Numerical results}
\subsection{Equality test of two large-scale fading matrices}

To examine the finite-sample performance of the proposed procedures in Section \ref{appl1}, we conduct Monte Carlo experiments based on the sparse MIMO model \eqref{mimo}. The large-scale fading matrices $\mathbf{L}_1$ and $\mathbf{L}_2$ are generated as $N_r \times N_t$ matrices with entries
\[
l_{ij,1} \sim \text{Uniform}(2,4), 
\qquad 
l_{ij,2} = l_{ij,1} - \theta,
\]
where $\theta$ controls the deviation between the two environments: $\theta = 0$ corresponds to the null hypothesis $\mathcal{H}_0$, whereas $\theta \neq 0$ represents the alternative $\mathcal{H}_1$. 
To introduce sparsity, the missing indicators are generated as $b_{ij,1}, b_{ij,2} \sim \rm Bernoulli(1, s)$,
with $s = q^2 / N_t$. Two regimes of $q$ are examined, corresponding to $q = 0.5\, N_t^{1/2}$ (moderate-sparsity regime) and $q = N_t^{1/3}$ (high-sparsity regime). We
examine two  distributions for the small-scale fading terms:\\
(1) the complex Gaussian distribution, where $g_{ij} \sim \mathcal{CN}(0,1)$;\\
(2) the Gamma distribution, where $\sqrt{2}\, g_{ij} + 2 \sim \Gamma(2,1)$.

Tables~\ref{tab1} and~\ref{tab2} report the empirical sizes and powers of the proposed test $T_x$ under different population distributions and system dimensions. 
Specifically, Table~\ref{tab1} corresponds to the setting $q = 0.5 N_t^{1/2}$ while Table~\ref{tab2} considers  $q = N_t^{1/3}$. 
The results are presented for both Gaussian and Gamma populations with various ratios $c_N = N_r / N_t \in \{0.8, 0.5, 0.25\}$ and receive antenna numbers $N_r \in \{200, 400, 800\}$. 
The nominal significance level is fixed at $0.05$.

\begin{table}[bp]
	\centering
	\caption{Empirical sizes and powers of the test $T_x$ when $q=0.5N_t^{1/2}$.}
	\small
	\renewcommand\arraystretch{1.2}
	\setlength{\tabcolsep}{3mm}
	\begin{tabular}{llllll}
		\hline
		Values of $\theta$& \multicolumn{1}{l}{$0$} & \multicolumn{1}{l}{$0.02$} & \multicolumn{1}{l}{$0.05$} & \multicolumn{1}{l}{$0.1$} & \multicolumn{1}{l}{$0.15$}\\
		\hline
		\multicolumn{6}{c}{ Gaussian population } \\
		\hline
		$N_r=200,c_N=0.8$ & \textbf{0.053}  &0.105 &0.448  &0.943 &1 \\
		$N_r=400,c_N=0.8$ & \textbf{0.045}  &0.272 &0.948 & 1 &1  \\
		$N_r=800,c_N=0.8$ & \textbf{0.043}  &0.826 &0.985   &1  &1\\
		\hline
		$N_r=200,c_N=0.5$ & \textbf{0.051}  &0.150 & 0.619 &0.991  &1 \\
		$N_r=400,c_N=0.5$ & \textbf{0.049}  & 0.442 &0.998 &1  &1 \\
		$N_r=800,c_N=0.5$ &\textbf{0.048}  & 0.931& 1  & 1 &1\\
		\hline
		$N_r=200,c_N=0.25$ &\textbf{0.048}  &0.232 &0.885  & 1 &1 \\
		$N_r=400,c_N=0.25$ & \textbf{0.046}  &0.747  &1   &1  &1\\
		$N_r=800,c_N=0.25$ &\textbf{0.051}  &0.986  & 1  &1  &1\\
		\hline
		\multicolumn{6}{c}{Gamma population } \\
		\hline
		$N_r=200,c_N=0.8$ & \textbf{0.051}  &0.081 &0.177 &0.502 &0.849\\
		$N_r=400,c_N=0.8$ & \textbf{0.048}  &0.125&0.532 & 0.984 &1 \\
		$N_r=800,c_N=0.8$ &\textbf{0.053}  &0.350& 0.977 &1  &1 \\
		\hline
		$N_r=200,c_N=0.5$ & \textbf{0.045}  &0.095 &0.251 &0.717 &0.964\\
		$N_r=400,c_N=0.5$ & \textbf{0.048}  &0.199 &0.730 &0.999  &1  \\
		$N_r=800,c_N=0.5$ &\textbf{0.051}  &0.518  &0.999 &1  &1 \\
		\hline
		$N_r=200,c_N=0.25$ &\textbf{0.049}  &0.124 &0.425 &0.940  &1\\
		$N_r=400,c_N=0.25$ &\textbf{0.048}  &0.273 &0.941   &1  &1 \\
		$N_r=800,c_N=0.25$ &\textbf{0.049}  &0.799 & 1  & 1 &1\\
		\hline
	\end{tabular}%
	\label{tab1}%
\end{table}%

\begin{table}[tp]
	\centering
	\caption{Empirical sizes and powers of the test $T_x$ when $q=N_t^{1/3}$.}
	\small
	\renewcommand\arraystretch{1.2}
	\setlength{\tabcolsep}{3mm}
	\begin{tabular}{llllll}
		\hline
		Values of $\theta$& \multicolumn{1}{l}{$0$} & \multicolumn{1}{l}{$0.02$} & \multicolumn{1}{l}{$0.05$} & \multicolumn{1}{l}{$0.1$} & \multicolumn{1}{l}{$0.15$}\\
		\hline
		\multicolumn{6}{c}{ Gaussian population } \\
		\hline
		$N_r=200,c_N=0.8$ & \textbf{0.048}  &0.118 &0.400    &0.898 &0.998 \\
		$N_r=400,c_N=0.8$ & \textbf{0.045}  &0.271&0.832 &1 &1  \\
		$N_r=800,c_N=0.8$ & \textbf{0.049}  &0.567 &0.999 & 1 &1\\
		\hline
		$N_r=200,c_N=0.5$ & \textbf{0.047}  &0.171 &0.622 &0.994 &1  \\
		$N_r=400,c_N=0.5$ & \textbf{0.049}  &0.406 &0.973 &1  &1 \\
		$N_r=800,c_N=0.5$ &\textbf{ 0.053}  &0.813  &0.998 &1  &1\\
		\hline
		$N_r=200,c_N=0.25$ &\textbf{0.045}  &0.342 &0.940 &0.998  &1 \\
		$N_r=400,c_N=0.25$ & \textbf{0.050}  &0.709   &1  & 1 &1\\
		$N_r=800,c_N=0.25$ &\textbf{0.049}  &0.994  &1   & 1 &1\\
		\hline
		\multicolumn{6}{c}{Gamma population } \\
		\hline
		$N_r=200,c_N=0.8$ & \textbf{0.046}  &0.096 &0.199 & 0.498 & 0.795\\
		$N_r=400,c_N=0.8$ & \textbf{0.047}  & 0.135& 0.436 &0.925  &0.998 \\
		$N_r=800,c_N=0.8$ &\textbf{0.045}  & 0.282& 0.844&1  &1 \\
		\hline
		$N_r=200,c_N=0.5$ & \textbf{0.048}  & 0.124&0.307 & 0.726 &0.977\\
		$N_r=400,c_N=0.5$ & \textbf{0.052}  &0.177 & 0.660&0.991 &1  \\
		$N_r=800,c_N=0.5$ & \textbf{0.050}  &0.422 &0.978  & 1 &1  \\
		\hline
		$N_r=200,c_N=0.25$ &\textbf{0.049}  &0.189 &0.610  & 0.984 &1\\
		$N_r=400,c_N=0.25$ &\textbf{0.048}  &0.372  &0.946 & 1 &1 \\
		$N_r=800,c_N=0.25$ &\textbf{0.050}  &0.730 &0.998   &1  &1\\
		\hline
	\end{tabular}%
	\label{tab2}%
\end{table}

For all configurations, the empirical sizes (at $\theta=0$) are close to the nominal level, indicating that the test is well calibrated. 
As $\theta$ increases, the empirical power rapidly approaches one, demonstrating the strong discriminating ability of $T_x$ under the alternative. 
Moreover, the power improves with larger $N_r$, and—importantly—with smaller $c_N$ (i.e., larger $N_t$ for fixed $N_r$). 
This agrees with the theoretical expectation that stronger signal or higher dimensionality enhances detection capability.

Figure~\ref{fig1} compares the empirical and theoretical power functions of $T_x$ under the Gaussian population for both sparsity regimes. 
In the moderate-sparsity regime, the empirical curves (markers) closely follow the theoretical predictions (lines) across different scaling parameters $s \in (0,1)$. 
A similar trend is observed in the high-sparsity regime, where the power curves rapidly converge to one as $\theta$ grows. 
Overall, the excellent agreement between simulation and theory confirms the validity of the asymptotic power analysis and the robustness of the proposed test statistic across different sparsity levels and population models.

\begin{figure}[tp]
	\centering
	\subfigure[Moderate-sparsity regime]{\includegraphics[width=0.48\linewidth]{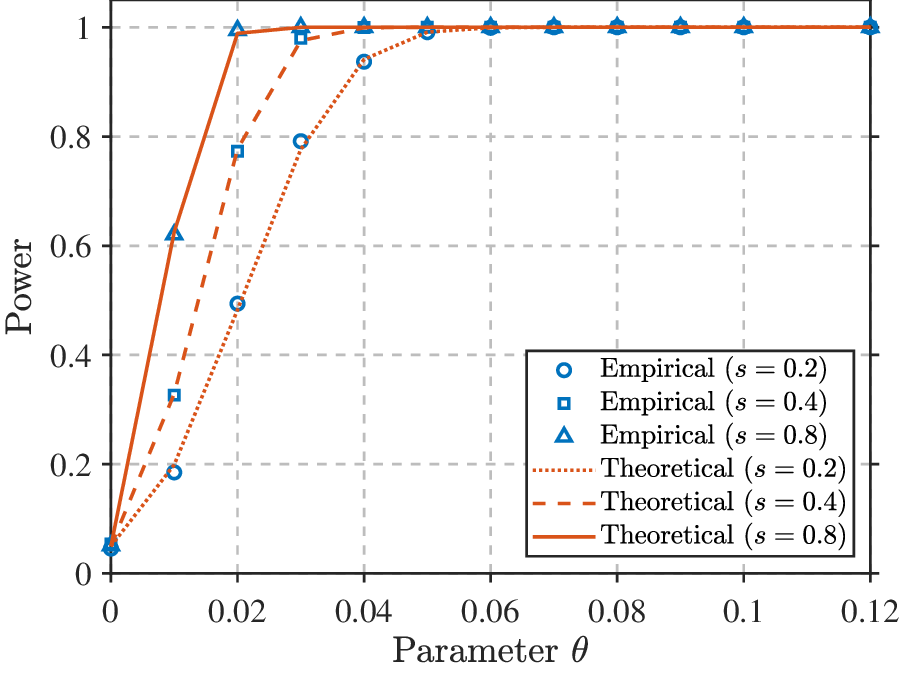}}
	\subfigure[ High-sparsity regime]{\includegraphics[width=0.48\linewidth]{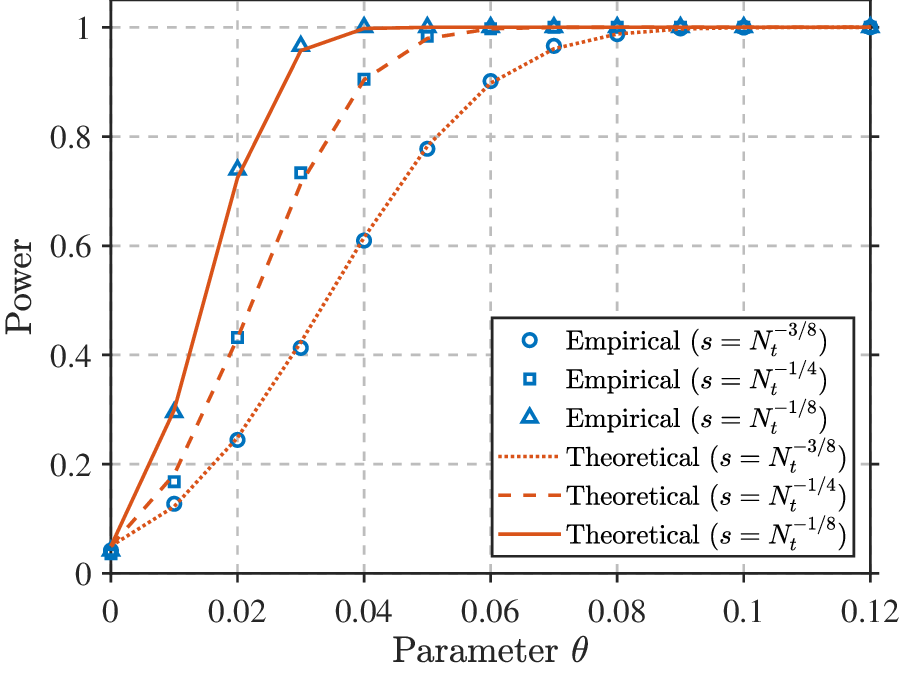}}
	\caption{Comparison of empirical and theoretical powers of the test $T_x$ under the Gaussian distribution. }
	\label{fig1}
\end{figure}

\subsection{Outage probability analysis}

To validate the CLT established in Theorem~\ref{MI}, we conduct Monte Carlo simulations focusing on two aspects: 
(i) the empirical distribution of the normalized mutual information, examined through histogram; and 
(ii) the accuracy of the theoretical outage probability approximation. 

The MIMO channel is generated according to~\eqref{eq:channel_model}, where the diagonal entries are drawn as $d_i \sim \text{Uniform}(1,2)$ for $i=1,\ldots,N_r$ and $\tilde{d}_j \sim \text{Uniform}(1,2)$ for $j=1,\ldots,N_t$. 
The random matrix $\mathbf{X}$ consists of i.i.d. entries $x_{ij} \sim \mathcal{CN}(0,1)$. Also, two cases of $q$ are considered, corresponding to  $q = 0.5 N_t^{1/2}$ (moderate-sparsity regime) and $q = N_t^{1/3}$ (high-sparsity regime).
Unless otherwise specified, we set $(N_r, N_t) = (512, 1024)$ and average the results over $2,000$ independent channel realizations.

Figure~\ref{fig:histqq} compares the empirical and theoretical distributions of the normalized statistic $T_{\log}$ for $\sigma=1$ under two sparsity regimes. 
In the moderate-sparsity case of $q=0.5N_t^{1/2}$, the histogram of $T_{\log}$ nearly coincides with the standard normal density.
A similar observation holds for the high-sparsity regime of  $q=N_t^{1/3}$.
Figure~\ref{fig:outage} shows the empirical and theoretical outage probabilities versus the transmission rate~$R$ for different signal-to-noise ratios (SNRs). 
The SNR (in dB) is related to the noise variance by 
$\text{SNR}_{\text{dB}} = 10 \log_{10} (1/\sigma^2)$, i.e., $\sigma^2 = 10^{-\text{SNR}_{\text{dB}}/10}$. 
Three SNR values are considered in the simulations, namely $0$, $0.2$, and $0.4~\text{dB}$. 
The analytical curves closely match the Monte Carlo results in both sparsity regimes, and a clear rightward shift is observed with increasing SNR, indicating improved achievable rates. 

Overall, the results in Figures~\ref{fig:histqq} and~\ref{fig:outage} validate the proposed asymptotic analysis: 
the Gaussianity of $T_{\log}$ confirms the CLT characterization, while the agreement in outage behavior demonstrates the accuracy of the theoretical approximation in practical finite-dimensional settings.

\begin{figure}[bp]
	\centering
	\subfigure[Moderate-sparsity regime]{\includegraphics[width=0.48\linewidth]{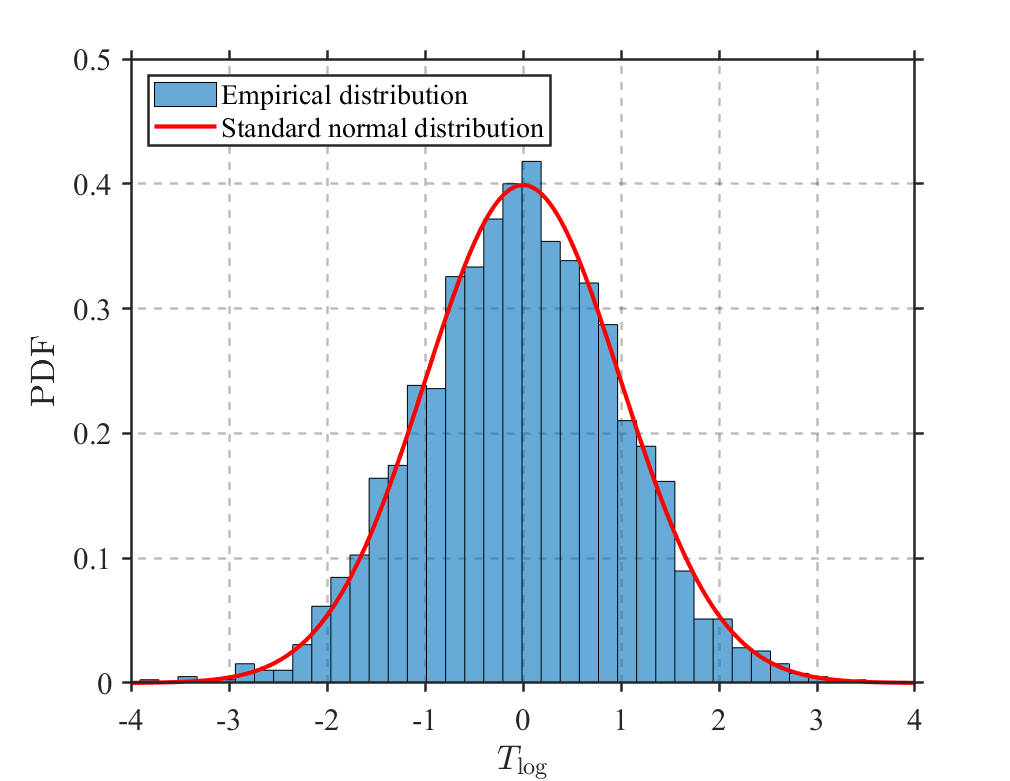}}
    \subfigure[ High-sparsity regime]{\includegraphics[width=0.48\linewidth]{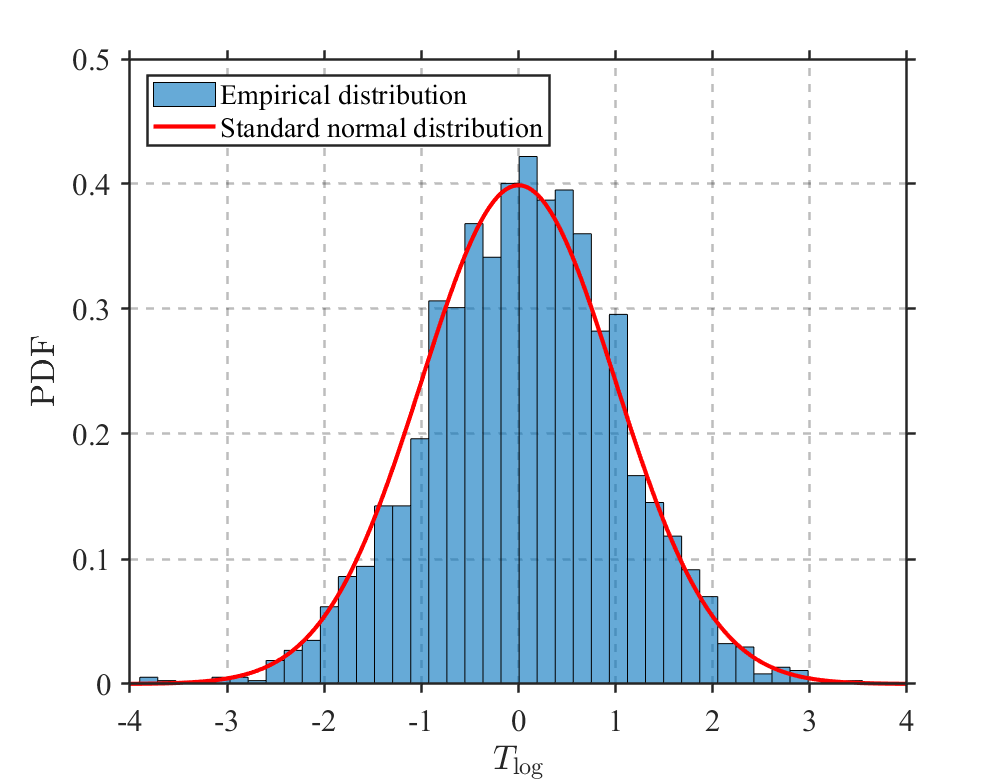}}
	\caption{Comparison of empirical and theoretical distributions of $T_{\log}$. }
	\label{fig:histqq}
\end{figure}

\begin{figure}[htbp]
	\centering
	\subfigure[Moderate-sparsity regime]{\includegraphics[width=0.49\linewidth]{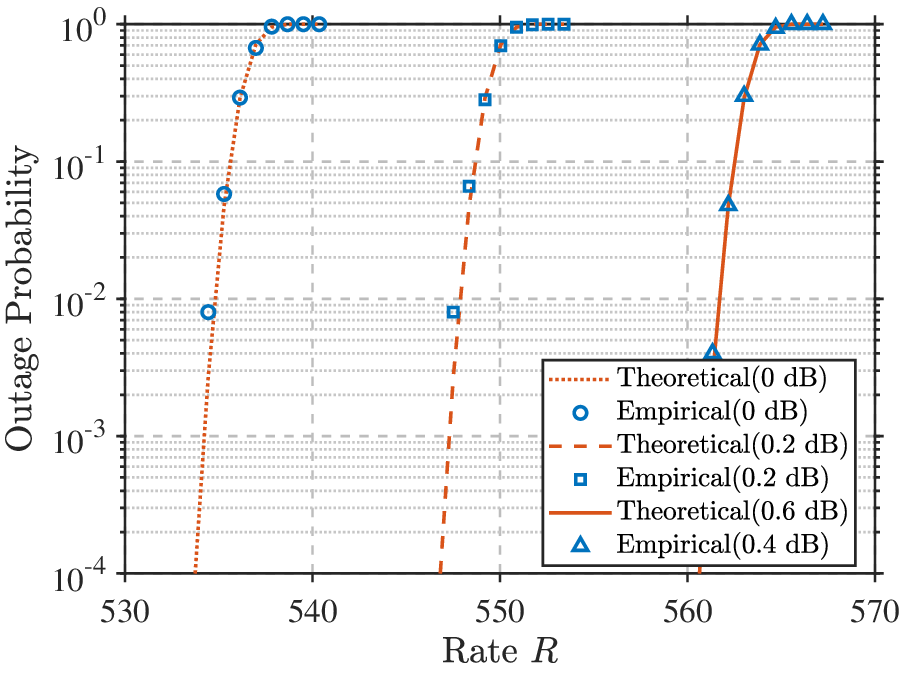}}
    \subfigure[ High-sparsity regime]{\includegraphics[width=0.49\linewidth]{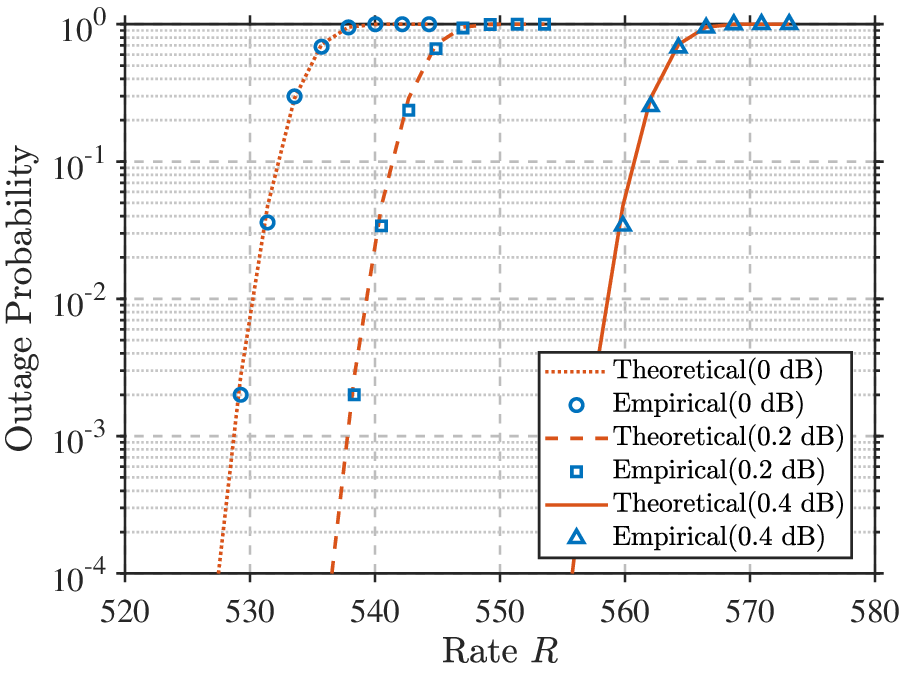}}
	\caption{Comparison of empirical and theoretical outage probability. }
	\label{fig:outage}
\end{figure}


\begin{appendix}
\section{Proofs}\label{app1}

\subsection{Proof of Theorem \ref{first-order}} \label{proof-th1}
We prove the two assertions separately.
	
\textbf{Proof of (1).}  
Our model satisfies the assumptions in \cite{Hachem2007}, including uniformly bounded variances and appropriate moment conditions. Consequently, part (a) follows from their Theorem~2.4 therein. The argument depends only on the second-order structure of the entries and applies to both dense and sparse settings; we therefore omit the routine details. 
		
\textbf{Proof of (2).} 
We consider $x_{ij} = b_{ij} w_{ij}/\sqrt{s}$, where $b_{ij} \sim \operatorname{Bernoulli}(1,s)$ and $w_{ij}$ are centered with $\mathbb{E}|w_{ij}|^2 = \sigma_{ij}^2$ and finite $4+\epsilon$ moments. Unlike the dense model in \cite{Hachem2007}, sparsity introduces a significant challenge: in the high-sparsity regime ($s \to 0$), the $4+\epsilon$ moments of $x_{ij}$ may diverge, and classical resolvent estimates for quadratic forms no longer apply directly.
When $s$ is fixed and bounded away from zero (moderate sparsity), classical techniques can be adapted. Our main contribution is the extension of these results to the high-sparsity regime, which relies on refined resolvent-based bounds for quadratic forms under diverging entry-wise moments.



		To address this difficulty, we introduce intermediate diagonal matrices  $\mathbf P_n$ and  $\widetilde{\mathbf P}_n$, and decompose the analysis into two differences:
		\[
		\frac{1}{p}\operatorname{Tr}(\mathbf Q_n-\mathbf P_n)
		\quad\text{and}\quad
		\frac{1}{p}\operatorname{Tr}(\mathbf P_n-\mathbf T_n),
		\]
       and similarly,
        \[
		\frac{1}{n}\operatorname{Tr}(\widetilde{\mathbf Q}_n-\widetilde{\mathbf P}_n)
		\quad\text{and}\quad
		\frac{1}{n}\operatorname{Tr}(\widetilde{\mathbf P}_n-\widetilde{\mathbf T}_n).
		\]
        The matrices $\mathbf P_n$ and $\widetilde{\mathbf P}_n$ are diagonal, defined by
        \begin{align*}
            p_i&\triangleq[\mathbf P_n]_{ii}=-\frac{1}{z\Bigl(1+\tfrac{1}{n}\operatorname{Tr}(\widetilde{\mathbf \Sigma}_i\widetilde{\mathbf Q}_n)\Bigr)}, \quad i=1,2,\cdots,p,\\
            \tilde p_j&\triangleq[\widetilde{\mathbf P}_n]_{jj}=-\frac{1}{z\Bigl(1+\tfrac{1}{n}\operatorname{Tr}({\mathbf \Sigma}_j{\mathbf Q}_n)\Bigr)}, \quad j=1,2,\cdots,n.
        \end{align*}

		For notational convenience, we omit the subscript $n$ in what follows.

		\emph{Step 1: ${p}^{-1}\operatorname{Tr}(\mathbf Q_n-\mathbf P_n)\to 0$ and ${n}^{-1}\operatorname{Tr}(\widetilde{\mathbf Q}_n-\widetilde{\mathbf P}_n) \to 0$.} 
        For the first term, we employ leave-one-out techniques. Denote by $\mathbf y_j$ the $j$-th column of $\mathbf Y$, and let $\mathbf Y^{(j)}$ be the $p\times (n-1)$ matrix obtained by deleting the $j$th column. Define the resolvents:
		\[
		\mathbf Q^{(j)}(z)=(\mathbf Y^{(j)}{\mathbf Y^{(j)}}^*-z\mathbf I)^{-1}, 
		\qquad 
		\widetilde{\mathbf Q}^{(j)}(z)=({\mathbf Y^{(j)}}^*\mathbf Y^{(j)}-z\mathbf I)^{-1}.
		\]
		Let $\widetilde{\mathbf \Sigma}_i^{(j)}$ denote the $(n-1)\times(n-1)$  matrix obtained by removing row $j$ and column $j$ from $\widetilde{\mathbf \Sigma}_i$, and let $\mathbf P^{(j)}$ be defined analogously with diagonal entries
		\[
		[\mathbf P^{(j)}]_{ii}=-\frac{1}{z\Bigl(1+\tfrac{1}{n}\operatorname{Tr}(\widetilde{\mathbf \Sigma}_i^{(j)}\widetilde{\mathbf Q}^{(j)})\Bigr)}.
		\]

        Introduce
		\[
		\omega_j=\mathbf y_j^*\mathbf Q^{(j)}\mathbf y_j,
		\qquad 
		\hat{\omega}_j=\mathbf y_j^*\mathbf P \mathbf U \mathbf Q^{(j)}\mathbf y_j.
		\]
        Let $\mathbf U_n$ and $\widetilde{\mathbf U}_n$ denote  deterministic diagonal sequences satisfying $\sup_n \max_{i}|[\mathbf U_n]_{ii}|\le K\le \infty$ and $\sup_n \max_{j}|[\widetilde{\mathbf U}_n]_{jj}|\le \tilde K\le \infty$. 
		Using the resolvent identity and formulas  from \cite[(6.5), (B.17)–(B.18)]{Hachem2007}, one obtains
		\begin{align}
			\frac{1}{p}\operatorname{Tr}(\mathbf P-\mathbf Q)\mathbf U
			&=\frac{1}{p}\operatorname{Tr}\mathbf Q(\mathbf Q^{-1}-\mathbf P^{-1})\mathbf P\mathbf U \notag\\
			&=\frac{1}{p}\sum_{j=1}^n \frac{\hat{\omega}_j}{1+\omega_j}-\frac{1}{p}\operatorname{Tr}\left(\mathbf P^{-1}+z\mathbf I\right)\mathbf P \mathbf U \mathbf Q \notag\\
			&=-\frac{1}{p}\sum_{j=1}^n z[\widetilde{\mathbf Q}]_{jj}(z)\left(\hat{\omega}_j-\frac{1}{n}\operatorname{Tr}\mathbf \Sigma_j \mathbf P \mathbf U \mathbf Q\right)\notag\\
			&=-\frac{1}{p}\sum_{j=1}^n z[\widetilde{\mathbf Q}]_{jj}(z)\left(\chi_{j,1}+\chi_{j,2}\right),  \label{ev1}
		\end{align}
		where
		\begin{align*}
			\chi_{j,1}&=\mathbf y_j^* \mathbf P^{(j)}\mathbf U \mathbf Q^{(j)}\mathbf y_j-\frac{1}{p}\operatorname{Tr}\left(\mathbf \Sigma_j \mathbf P(z)\mathbf U\mathbf Q(z)\right), \\
			\chi_{j,2}&=\mathbf y_j^*\mathbf P\left(\mathbf P^{(j)^{-1}}-\mathbf P^{-1}\right)\mathbf P^{(j)}\mathbf U\mathbf Q^{(j)}\mathbf y_j.
		\end{align*}
        Now decompose $\chi_{j,1}$ as
		\begin{align*}
			\chi_{j,1}=&\;\mathbf y_j^*\mathbf P^{(j)}\mathbf U \mathbf Q^{(j)}\mathbf y_j-\frac{1}{p}\operatorname{Tr}\left(\mathbf \Sigma_j \mathbf P^{(j)}(z)\mathbf U\mathbf Q^{(j)}(z)\right)\\
			&+\frac{1}{p}\operatorname{Tr}\mathbf \Sigma_j\mathbf P^{(j)}(z)\mathbf U\left(\mathbf Q^{(j)}(z)-\mathbf Q(z)\right)
			+\frac{1}{p}\operatorname{Tr}\mathbf \Sigma_j\left(\mathbf P^{(j)}(z)-\mathbf P(z)\right)\mathbf U\mathbf Q(z).
		\end{align*}
		From Lemma \ref{quad_ineq} and Minkowski’s inequality, it follows that
		\[
		\bbe|\chi_{j,1}|^{2+\epsilon/2}\leq \frac{K}{q^{2+\epsilon/2}}.
		\]
		For $\chi_{j,2}$, using \cite[(B.20)]{Hachem2007} and Assumption \ref{ass1}, we obtain
		\[
		\bbe|\chi_{j,2}|^{2+\epsilon/2}\leq \frac{K}{n^{2+\epsilon/2}}\bbe\|\mathbf y_j\|^{4+\epsilon}
		\leq \frac{K}{n^{2+\epsilon}}.
		\]
		Inserting these bounds into \eqref{ev1} and applying Minkowski’s inequality yields
		\[
		\mathbb E\left|\frac{1}{p}\sum_{j=1}^n z[\widetilde{\mathbf Q}]_{jj}(z)(\chi_{j,1}+\chi_{j,2})\right|^{2+\epsilon/2}
		\leq \frac{K}{q^{2+\epsilon/2}}.
		\]
        
		Therefore,
		\[
		\mathbb E\left|\frac{1}{p}\operatorname{Tr}(\mathbf P-\mathbf Q)\mathbf U\right|^{2+\epsilon/2}
		\leq \frac{K}{q^{2+\epsilon/2}}.
		\]

      By Markov's inequality and the Borel--Cantelli lemma,
    \[
    \frac{1}{p} \operatorname{Tr}(\mathbf{Q}_n - \mathbf{P}_n) \to 0 \quad \text{almost surely}.
    \]
    The same holds for ${n}^{-1} \operatorname{Tr}(\widetilde{\mathbf{Q}}_n - \widetilde{\mathbf{P}}_n)$ by symmetry.

	\emph{Step 2: ${p}^{-1}\operatorname{Tr}(\mathbf P_n-\mathbf T_n) \to 0$ and ${n}^{-1}\operatorname{Tr}(\widetilde{\mathbf P}_n-\widetilde{\mathbf T}_n) \to 0$.}
    We adapt the arguments from  \cite[Section~6.2]{Hachem2007}. 
    Consider the domain
    \[
    D := \{z\in\mathbb C^{+}: |\Im z|\ge v_0,\ |z|\le M\},
    \]
    with fixed $v_0 > 0$, $M < \infty$. On $D$, all resolvents satisfy
    \[
    \|\mathbf T(z)\|,\ \|\mathbf P(z)\|,\ \|\widetilde{\mathbf T}(z)\|,\ \|\widetilde {\mathbf P}(z)\|
    \le \frac{1}{|\Im z|}.
    \]

        From the identity $
        \mathbf P-\mathbf T=\mathbf P(\mathbf T^{-1}-\mathbf P^{-1})\mathbf T
        $, we get  
        $$
        \frac{1}{p}\operatorname{Tr}(\mathbf P-\mathbf T)=\frac{1}{p} \sum_{i} (\frac{1}{t_i}-\frac{1}{p_i})t_ip_i,
        $$
        where $|t_i(z)p_i(z)|\leq 1/|\Im z|^2$. Hence, by Minkowski's inequality,
        \begin{align} \label{P-T}
            \left\|\frac{1}{p}\operatorname{Tr}(\mathbf P-\mathbf T)\right\|_{2+\epsilon} \leq \frac{1}{|\Im z|^2}\sup_i \left\|\frac{1}{t_i}-\frac{1}{p_i}\right\|_{2+\epsilon}.
        \end{align}
        From the definitions  of $t_i(z)$ and of $p_i(z)$, we obtain
        $$
        \frac{1}{t_i(z)}-\frac{1}{p_i(z)}=\frac{z}{n} \operatorname{Tr}\widetilde{\mathbf \Sigma}_i(\widetilde{\mathbf T}-\widetilde{\mathbf P})+\frac{z}{n} \operatorname{Tr}\widetilde{\mathbf \Sigma}_i(\widetilde{\mathbf P}-\widetilde{\mathbf Q}).
        $$
        By Step~1 and $\|\widetilde{\mathbf\Sigma}_i\|\le\sigma_{\max}^{2}$, 
    \[
    \sup_i \Bigl\|\frac{z}{n}\Tr\widetilde{\mathbf \Sigma}_i(\widetilde{\mathbf P}-\widetilde{\mathbf Q})\Bigr\|_{2+\epsilon/2}\le \frac{K|z|}{q}.
     \]

        Similarly,  rewriting
        $   \widetilde{\mathbf T}-\widetilde{\mathbf P}=\widetilde{\mathbf T}(\widetilde{\mathbf P}^{-1}-\widetilde{\mathbf T}^{-1})\widetilde{\mathbf P}$, and repeating the argument leading to~\eqref{P-T}, we obtain
        \begin{align}\label{t-p}
            \left\|\frac{1}{t_i(z)}-\frac{1}{p_i(z)}\right\|_{2+\epsilon/2}\leq  \frac{K_1|z|\sigma_{\max}^2}{|\Im z|^2}\sup_j \left\| \frac{1}{\tilde t_j(z)}-\frac{1}{\tilde p_j(z)}\right\|_{2+\epsilon/2}+\frac{K_2|z|}{q}.
        \end{align}
        Likewise, for each $1 \leq j \leq n$, we have 
        \begin{align}\label{tilde t-p}
            \left\|\frac{1}{\tilde t_j(z)}-\frac{1}{\tilde p_j(z)}\right\|_{2+\epsilon/2}\leq  \frac{K_1^\prime|z|\sigma_{\max}^2}{|\Im z|^2}\sup_k \left\| \frac{1}{t_k(z)}-\frac{1}{p_k(z)}\right\|_{2+\epsilon/2}+\frac{K_2^\prime|z|}{q}.
        \end{align}
        Combining inequalities \eqref{t-p} and \eqref{tilde t-p}, this gives
        \[
        \sup_i 
        \Bigl\|\frac1{t_i(z)}-\frac1{p_i(z)}\Bigr\|_{2+\epsilon/2}
        \le \frac{K}{q}.
        \]
        Together with \eqref{P-T}, we conclude that for each $z \in D$,
		\[
		\mathbb E\left|\frac{1}{p}\operatorname{Tr}(\mathbf P-\mathbf T)\right|^{2+\epsilon/2}
		\leq \frac{K}{q^{2+\epsilon/2}},
		\]
        and similarly
        \[
		\mathbb E\left|\frac{1}{n}\operatorname{Tr}(\widetilde{\mathbf P}-\widetilde{\mathbf T})\right|^{2+\epsilon/2}
		\leq \frac{K^\prime}{q^{2+\epsilon/2}}.
		\]

        \emph{Step 3. End of the proof.} 
        Combining Steps~1 and~2, we have
    \[
    \mathbb{E} \left| \frac{1}{p} \operatorname{Tr}(\mathbf{Q} - \mathbf{T}) \right|^{2+\epsilon/2} \leq \frac{K}{q^{2+\epsilon/2}}, \quad
    \mathbb{E} \left| \frac{1}{n} \operatorname{Tr}(\widetilde{\mathbf{Q}} - \widetilde{\mathbf{T}}) \right|^{2+\epsilon/2} \leq \frac{K'}{q^{2+\epsilon/2}}.
    \]

    By Markov's inequality, convergence in probability holds:
    \[
    \frac{1}{p} \operatorname{Tr}(\mathbf{Q} - \mathbf{T}) \xrightarrow{\mathcal{P}} 0, \quad
    \frac{1}{n} \operatorname{Tr}(\widetilde{\mathbf{Q}} - \widetilde{\mathbf{T}}) \xrightarrow{\mathcal{P}} 0.
    \]
    
    Moreover, if $q \to \infty$ sufficiently fast so that $\sum_n q^{-2-2+\epsilon/2} < \infty$, then Borel--Cantelli yields almost sure convergence along subsequences. Finally, by analyticity and the fact that the resolvent traces form a normal family on any compact subset of $\mathbb{C} \setminus \mathbb{R}^+$ (see \cite[Section~6.3]{Hachem2007}), the convergence extends uniformly on such sets.

		
			

\subsection{Outline of the proof of Theorem \ref{th1}}

To characterize the asymptotic distribution in Theorem \ref{th1}, we first introduce an auxiliary lemma that provides the key quantities $U_j(z)$ and $\tilde{U}_i(z)$. The proof is provided in the supplementary material.
\begin{lemma}\label{mean-eq}
    Consider a variance profile $(\sigma_{ij}^2)$ satisfying Assumptions \ref{ass2}-\ref{ass3} , and let $t_i$, $\tilde t_j$ be as defined in Theorem \ref{first-order}.
    Fix $z \in \mathbb{C} \setminus \mathbb{R}^+$.

    \begin{enumerate}
        \item For $1 \leq j \leq n$, the quantity $U_j(z)$ admits the representation
        \[
        U_j(z) = \frac{1}{z^2 \tilde{t}_j^2(z)} \breve{y}_{j,j}(z),
        \]
        where $\breve{y}_{j,j}(z)$ is determined by the $n$-dimensional linear system
        \begin{align}
            \breve{y}_{l,j}(z) = \sum_{i \neq j}^{n} a_{li}(z,z) \breve{y}_{i,j}(z) + n a_{lj}(z,z), \quad 1 \leq l,j \leq n, \label{Uj}
        \end{align}
        in the unknowns $\{\breve{y}_{l,j}(z): 1 \leq l,j \leq n\}$. This system has a unique solution for all sufficiently large $n$.

        \item Similarly, for $1 \leq i \leq p$, the quantity $\tilde{U}_i(z)$ satisfies
        \[
        \tilde{U}_i(z) = \frac{1}{z^2 t_i^2(z)} \tilde{y}_{i,i}(z),
        \]
        where $\tilde{y}_{i,i}(z)$ is obtained from the $p$-dimensional linear system
        \begin{align}
            \tilde{y}_{i,m}(z) = \sum_{l \neq m}^{p} \acute{a}_{il}(z) \tilde{y}_{l,m}(z) + n \acute{a}_{im}(z), \quad 1 \leq i,m \leq p, \label{Ui}
        \end{align}
        with coefficients
        \[
        \acute{a}_{im}(z) = \frac{1}{n^2} \frac{\operatorname{Tr} \left( \widetilde{\mathbf{\Sigma}}_i \widetilde{\mathbf{T}}(z) \widetilde{\mathbf{\Sigma}}_m \widetilde{\mathbf{T}}(z) \right)}{\left(1 + \frac{1}{n} \operatorname{Tr} \left( \widetilde{\mathbf{\Sigma}}_i \widetilde{\mathbf{T}}(z) \right) \right)^2}.
        \]
        This system also admits a unique solution for all sufficiently large $n$.
    \end{enumerate}
\end{lemma}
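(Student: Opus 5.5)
The proof is a linear-algebra argument. We rewrite each system as $(\mathbf I-\mathbf A^{(j)})\breve{\mathbf y}_{\cdot,j}=n\mathbf a_{\cdot j}$, where $\mathbf A^{(j)}$ is $\mathbf A_n(z,z)$ with its $j$-th column set to zero. Existence and uniqueness then follow once $\mathbf I-\mathbf A^{(j)}$ is invertible uniformly in $j$ and $n$. The representation formulas for $U_j$ and $\tilde U_i$ are taken as the definitions that come out of the mean computation: $U_j$ is the deterministic equivalent of $n^{-1}\sum_m \tilde t_m^2 [\widetilde{\mathbf Q}^{(j)}]$-type quantities. We check that the leave-one-out resolvent expansion of $z^2\tilde t_j^2U_j$ produces exactly \eqref{Uj}, whose right-hand side $na_{lj}$ is the contribution of the removed column.

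\emph{Step 1: reduction to a spectral-radius bound.} Since $\mathbf A^{(j)}$ is obtained from $\mathbf A_n(z,z)$ by deleting a column, entrywise $|\mathbf A^{(j)}|\le|\mathbf A_n(z,z)|$. Hence
\[
\rho(\mathbf A^{(j)})\le\rho(|\mathbf A_n(z,z)|),
\]
and it suffices to show $\rho(|\mathbf A_n(z,z)|)\le 1-\delta$, or at least $\|\cdot\|_\infty$ bounded after a diagonal similarity. Then $(\mathbf I-\mathbf A^{(j)})^{-1}=\sum_k(\mathbf A^{(j)})^k$ converges with norm at most $K/\delta$, giving uniqueness and uniform bounds. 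The same argument applies to the $p\times p$ matrix $(\acute a_{im})$.

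\emph{Step 2: the contraction estimate, which is the main obstacle.} Write
\[
a_{lm}=\frac{1}{n^2}z^2\tilde t_l(z)^2\,\Tr\mathbf\Sigma_l\mathbf T\mathbf\Sigma_m\mathbf T=\frac{1}{n^2}z^2\tilde t_l^2\sum_i\sigma_{il}^2\sigma_{im}^2t_i^2,
\]
using $\tilde t_l=-1/(z(1+n^{-1}\Tr\mathbf\Sigma_l\mathbf T))$. Following Hachem--Loubaton--Najim \cite{Hachem2008}, we take imaginary parts of the defining equations \eqref{equation}. This gives, with
\[
\mathbf u=(\Im \tilde t_l/|\tilde t_l|^2)_l \quad\text{and}\quad \mathbf v=(\Im z\,\cdot)_l,
\]
a relation of the form $\mathbf u=\mathbf C\mathbf u+\mathbf v$, where $\mathbf C=(|a_{lm}|)$ up to a Cauchy--Schwarz bound $|a_{lm}|\le(c^{(1)}_{lm}c^{(2)}_{lm})^{1/2}$. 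Here $\mathbf v$ has entries bounded below by $K\Im z$ by Assumption~\ref{ass3}, and $\mathbf u$ is bounded above. A Perron--Frobenius argument for nonnegative $\mathbf C$ with $\mathbf u=\mathbf C\mathbf u+\mathbf v$, $\mathbf u,\mathbf v>0$, yields
\[
\rho(\mathbf C)\le 1-\min_l v_l/\max_l u_l\le 1-\delta.
\]
This bound is uniform in $n$ for $z\in\mathbb C^+$ with $\Im z\ge v_0$. For $z$ on the negative real axis we use $t_i,\tilde t_j>0$ directly. For general $z\in\mathbb C\setminus\mathbb R^+$ away from the support we extend by an analogous argument with the measures $\pi_n,\tilde\pi_n$, using the distance to $\mathbb R^+$ in place of $\Im z$. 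The ``sufficiently large $n$'' qualifier absorbs the $O(1/n)$ discrepancy between $\mathbf A$ and its column-deleted versions, and the $\liminf$ in Assumption~\ref{ass3}.

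\emph{Step 3: the dual system.} We repeat Step 2 for $\acute a_{im}$, which is the analogous matrix with the roles of $(\mathbf T,\mathbf\Sigma_j)$ and $(\widetilde{\mathbf T},\widetilde{\mathbf\Sigma}_i)$ exchanged, using the first equation of \eqref{equation}. The representation for $\tilde U_i$ then follows by the symmetric row-deletion computation.
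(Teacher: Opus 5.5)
Your proposal proves only the well-posedness half of the lemma. The first assertion, $U_j(z)=\breve y_{j,j}(z)/(z^2\tilde t_j^2(z))$, is where the content lies, and it is not proved.

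You say you ``check'' a leave-one-out expansion but never carry it out. Declaring the formulas to be definitions is circular, because $U_j$ has an independent meaning coming from the mean computation. You also misidentify the object. The way $\kappa U_j$ sits next to $\frac{\kappa+2}{n}\Tr(\mathbf T\circ\mathbf T\circ\mathbf\Sigma_j^2)$ in $\theta_j$, read against Lemma~\ref{split} with $\mathbf A=\mathbf B=\mathbf Q^{(j)}$, indicates (and the computation below confirms) the following. $U_j$ is the deterministic equivalent of the $p\times p$ trace $\frac1n\Tr\mathbf\Sigma_j\mathbf Q^{(j)}\mathbf\Sigma_j\mathbf Q^{(j)}$, and $\breve y_{l,j}$ plays the role of $z^2\tilde t_l^2\frac1n\Tr\mathbf\Sigma_l\mathbf Q^{(j)}\mathbf\Sigma_j\mathbf Q^{(j)}$. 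No $\widetilde{\mathbf Q}^{(j)}$ or $\sum_m\tilde t_m^2$ is involved. Likewise, $na_{lj}=z^2\tilde t_l^2\frac1n\Tr\mathbf\Sigma_l\mathbf T\mathbf\Sigma_j\mathbf T$ is not the contribution of the removed column. It is the term obtained by replacing both resolvents by $\mathbf T$; the removed column enters only through the restriction $i\neq j$.

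The missing argument runs as follows.
Write $\mathbf Q^{(j)}-\mathbf T=\mathbf T\bigl(\mathbf T^{-1}-(\mathbf Q^{(j)})^{-1}\bigr)\mathbf Q^{(j)}$ with $\mathbf T^{-1}=-z(\mathbf I+\frac1n\sum_m\tilde t_m\mathbf\Sigma_m)$.
Apply Sherman--Morrison to each $\mathbf y_m\mathbf y_m^*$, $m\neq j$.
Replace $(1+\mathbf y_m^*\mathbf Q^{(j,m)}\mathbf y_m)^{-1}$ by $-z\tilde t_m$, where $\mathbf Q^{(j,m)}$ has columns $j,m$ removed; the first-order part then cancels the $-\frac zn\tilde t_m\mathbf\Sigma_m$ term.
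For $y_l=\frac1n\Tr\mathbf\Sigma_l\mathbf Q^{(j)}\mathbf\Sigma_j\mathbf Q^{(j)}$ this gives
\[
y_l=\frac1n\Tr\mathbf\Sigma_l\mathbf T\mathbf\Sigma_j\mathbf T+\sum_{m\neq j}\frac{z^2\tilde t_m^2}{n^2}\Tr(\mathbf\Sigma_l\mathbf T\mathbf\Sigma_m\mathbf T)\,y_m+o(1).
\]
The replacements of quadratic forms by traces are controlled by Lemma~\ref{quad_ineq} and cost $O(q^{-1})$ rather than $O(n^{-1/2})$, because of sparsity. Multiplying by $z^2\tilde t_l^2$ gives \eqref{Uj} up to an $o(1)$ error vector. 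You then need a uniform bound on $\|(\mathbf I-\mathbf A^{(j)})^{-1}\|_\infty$, not merely $\rho<1$, to conclude $U_j=\breve y_{j,j}/(z^2\tilde t_j^2)$ in the limit. The same row-wise computation with $\acute a_{im}$ is needed for $\tilde U_i$.

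The solvability half is the standard imaginary-part/Perron--Frobenius argument and it works, but state it exactly.
Taking imaginary parts in \eqref{equation} gives $u_l=\Im z+\sum_m c_{lm}u_m$, with $u_l=\Im\tilde t_l/|\tilde t_l|^2$ and $c_{lm}=d_{lm}|\tilde t_m|^2$, where $d_{lm}=\frac{|z|^2}{n^2}\sum_i\sigma_{il}^2\sigma_{im}^2|t_i|^2$. On the other hand, $|a_{lm}|\le|\tilde t_l|^2d_{lm}$.
So with $\mathbf\Delta=\operatorname{diag}(|\tilde t_l|^2)$ you have $\mathbf C=\mathbf D\mathbf\Delta$ and $|\mathbf A|\le\mathbf\Delta\mathbf D$, which have the same spectral radius.
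No Cauchy--Schwarz step and no Assumption~\ref{ass3} are needed: $v_l=\Im z$ exactly, and $u_l\le1/|\tilde t_l|\le|z|(1+\frac pn\sigma_{\max}^2/\Im z)$.

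On $(-\infty,0)$, positivity of $t_i,\tilde t_j$ alone does not bound the Perron root. Instead, differentiate \eqref{equation} in $z$ to get $\mathbf u=\mathbf C\mathbf u+\mathbf 1$ with $u_l=\tilde t_l'/\tilde t_l^2>0$. Together with conjugation for $\mathbb C^-$, these cases exhaust $\mathbb C\setminus\mathbb R^+$, so no extension via distance to $\mathbb R^+$ is needed.

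Finally, the $K/\delta$ bound on the inverse follows from $\mathbf C^k\mathbf u\le(1-\delta)^k\mathbf u$ together with the two-sided bounds on $u_l$ and $|\tilde t_l|$. This gives invertibility for every $n$, which is stronger than ``sufficiently large $n$''.
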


Now proceed with the main proof.  Since Corollary~\ref{coro1} is an immediate consequence of Theorem~\ref{th1}, it suffices to prove the latter.

The first step is to replace the entries of $\mathbf X_n$ by appropriately truncated and centralized versions.   
By Assumption~\ref{ass1}, we may choose $\delta_n \downarrow 0$ with $\delta_n q \uparrow \infty$ such that
\begin{equation}\label{linderberg}
\frac{1}{(\delta_n q)^4}
\sum_{i,j} \mathbb E\!\left[|w_{ij}|^{4}\,
I(|w_{ij}| \ge \delta_n q)\right]
\rightarrow 0,
\end{equation}
for $n^{\phi}\le q \le n^{1/2}$ and $\phi>1/4$.  
The sequence $\delta_n$ may be taken to decay arbitrarily slowly.  
This Lindeberg-type condition follows from the existence of moments of order $8+\epsilon$ and ensures that the truncation does not alter the limiting behavior.

Let $\widehat{\mathbf S}_n = (ns)^{-1} (\mathbf B\circ \widehat{\mathbf W}_n)(\mathbf B\circ \widehat{\mathbf W}_n)^*$,  
where $\widehat{\mathbf W}_n$ has entries  
$\hat w_{ij} = w_{ij}I(|w_{ij}|<\delta_n q)$.  
Then
\begin{align}
\mathbb P(\mathbf S_n \neq \widehat{\mathbf S}_n)
&\le \sum_{i,j}\mathbb P(|w_{ij}|\ge \delta_n q)  \notag \\
&\le \frac{1}{(\delta_n q)^4}
    \sum_{i,j}\mathbb E\!\left[|w_{ij}|^{4}I(|w_{ij}| \ge \delta_n q)\right]= o(1).  \notag
\end{align}

Define 
\[
\widehat{\mathbf X}_n = s^{-1/2}(\mathbf B\circ \widehat{\mathbf W}_n),
\qquad
\widetilde{\mathbf X}_n = s^{-1/2}(\mathbf B\circ \widetilde{\mathbf W}_n),
\]
where the truncated variables are
\[
\hat w_{ij}=w_{ij}I(|w_{ij}|<\delta_n q),
\qquad
\tilde w_{ij}=\hat w_{ij}-\mathbb E\hat w_{ij}.
\]
Then
\[
\tilde x_{ij}
=s^{-1/2}(b_{ij}\tilde w_{ij})
=s^{-1/2}b_{ij}\big(\hat w_{ij}-\mathbb E(\hat w_{ij})\big).
\]

Let $\widehat{L}^c_n(f)$ and $\widetilde{L}^c_n(f)$ denote the analogues of $L^c_n(f)$ obtained by replacing 
$\mathbf S_n$ with $\widehat{\mathbf S}_n$ and $\widetilde{\mathbf S}_n$, respectively.  
Let $\lambda_i^{\mathbf A}$ be the $i$-th largest eigenvalue of a Hermitian matrix $\mathbf A$.  
Using the argument  in \cite[Corollary~A.42]{bai2010} and Cauchy--Schwarz's inequality, we obtain
\begin{align*}
\mathbb E\big|\widetilde{L}^c_n(f)-\widehat{L}^c_n(f)\big|
&\le \frac{K_f q}{\sqrt p}\sum_{i=1}^p 
   \mathbb E\Big|\lambda_i(\widehat{\mathbf S}_n) -\lambda_i(\widetilde{\mathbf S}_n)\Big| \\
&\le \frac{K_f q}{n\sqrt p}\sum_{i=1}^n 
   \mathbb E\Big|\lambda_i(\widehat{\mathbf X}_n\widehat{\mathbf X}_n^{*})
               -\lambda_i(\widetilde{\mathbf X}_n\widetilde{\mathbf X}_n^{*})\Big| \\
&\le \frac{K_f q}{n\sqrt p}
      \mathbb E\!\left[
      \operatorname{Tr}(\widetilde{\mathbf X}_n-\widehat{\mathbf X}_n)
      (\widetilde{\mathbf X}_n-\widehat{\mathbf X}_n)^{*}
      \cdot 2\Big(
      \operatorname{Tr}\widehat{\mathbf X}_n\widehat{\mathbf X}_n^{*}
      +\operatorname{Tr}\widetilde{\mathbf X}_n\widetilde{\mathbf X}_n^{*}\Big)
      \right]^{1/2},
\end{align*}
where   $K_f$ is a bound on $|f^\prime(x)|$.

Since $\tilde w_{ij} = \hat w_{ij}-\mathbb E\hat w_{ij}$,
$|\hat w_{ij}-\tilde w_{ij}| = |\mathbb E\hat w_{ij}|$. 
From the truncation condition~\eqref{linderberg},
\[
\sum_{i,j}|\mathbb E\hat w_{ij}|^2
\le \frac{1}{\delta_n^6 q^6}
   \sum_{i,j}\{\mathbb E[|w_{ij}|^4I(|w_{ij}|\ge\delta_nq)]\}^2
= o(1).
\]
Thus
\begin{align*}
	& \frac{q}{n\sqrt{p}}\bbe \operatorname{Tr}(\widetilde{\mathbf{X}}_n-\widehat{\mathbf{X}}_n)(\widetilde{\mathbf{X}}_n-\widehat{\mathbf{X}}_n)^*\\
	\leq &\frac{q}{n\sqrt{p}}\sum_{i, j} \bbe\left|\hat{w}_{i j}-\tilde{w}_{i j}\right|^2 = \frac{q}{n\sqrt{p}}\sum_{i, j}|\bbe \hat w_{ij}|^2=o(1).
\end{align*}
Furthermore,
\[
\mathbb E\operatorname{Tr}\widehat{\mathbf X}_n\widehat{\mathbf X}_n^{*}
\le \sum_{i,j}\mathbb E|\hat w_{ij}|^2
\le Knp,
\qquad
\mathbb E\operatorname{Tr}\widetilde{\mathbf X}_n\widetilde{\mathbf X}_n^{*}
\le \sum_{i,j}\mathbb E|\tilde w_{ij}|^2 \le Knp.
\]
Therefore,
\[
\mathbb E\big|\widetilde{L}^c_n(f)-\widehat{L}^c_n(f)\big|\to0.
\]

Let $\hat\sigma_{ij}^2$ be the variance of $\hat x_{ij}$ (equivalently, the variance of $\hat w_{ij}$).  From truncation, we know that
\[
\big|\sigma_{ij}^2-\hat\sigma_{ij}^2\big|
\le \frac{2}{\delta_n^2 q^2}
   \mathbb E[|w_{ij}|^4I(|w_{ij}|\ge\delta_n q)]\
=o(1)
\]
uniformly in $i,j$.  
Thus the truncation and centering step does not affect the variance profile in the limit. 
For notational simplicity, we henceforth write $w_{ij}$ for $\tilde w_{ij}$, which satisfy the strengthened conditions 
\begin{align*}
    &|w_{ij}|\le \delta_n q,\quad
\mathbb E w_{ij}=0,\\
&\mathbb E|w_{ij}|^{2}=\sigma_{ij}^{2}+o(1) \quad \text{uniformly in } i,j,\\
&\mathbb E|w_{ij}|^{8+\epsilon}
=\tilde\nu_{8+\epsilon}|\sigma_{ij}|^{8+\epsilon}+o(1) \quad \text{uniformly in } i,j,
\end{align*}
where $\tilde{\nu}_{8+\epsilon}$ denotes a finite constant depending on the $8+\epsilon$ moment of the underlying distribution. Moreover, $\delta_n$ satisfies $\lim _{n \rightarrow \infty}1/(\delta_n q)^{4} \sum_{i,j} \mathbb{E}\left|w_{i j}\right|^{4}I\left(\left|w_{i j}\right| \geq \delta_n q\right)=0$,  $\delta_n \downarrow 0$, and $\delta_n q \uparrow \infty$.
All subsequent analysis is carried out under these bounded-moment conditions.

		To link the LSS with the resolvent formulation, recall that for any analytic $f$ and cumulative distribution function $Q$,
		$$
		\int f(x) \mathrm d Q(x)=-\frac{1}{2 \pi i} \oint f(z) m_Q(z) \mathrm d z,
		$$
		where the contour integral is taken along any positively oriented closed path enclosing the support of $Q$ on which $f$ is analytic.
		Define 
		\[
		m_n^0(z) = \frac{1}{p}\sum_{i=1}^p t_i(z),
		\qquad 
		\underline{m}_n^0(z) = \frac{1}{n}\sum_{j=1}^n \tilde t_j(z),
		\]
		and let
		\[
		M_n(z)
		= \sqrt{p}\,q\big[m_n(z) - m_n^0(z)\big]
		= \frac{nq}{\sqrt{p}}\big[\underline m_n(z) - \underline m_n^0(z)\big].
		\]
		The proof of Theorem~\ref{th1} thus reduces to establishing the limiting behavior of the stochastic process $M_n(z)$.
		More precisely, we consider a truncated version $\widehat{M}_n(\cdot)$ of $M_n(\cdot)$, viewed as a random process defined on a contour $\mathcal{C}$ in the complex plane, described as follows.
		Let $v_0>0$ and choose $x_r \in (\sigma_{\max}^2(1+\sqrt{c})^2, \infty)$. 
        If $\inf \mathrm{supp}(\mu) = 0$, let $x_l$ be any negative number; otherwise, choose $x_l \in (0, \inf \mathrm{supp}(\mu))$.
		  Let $\mathcal{C}_u = \{x + \mathbf{i} v_0: x \in [x_l, x_r]\}$. 
		Then 
		\[
		\mathcal{C} = 
		\{x_l + \mathbf{i} v: v \in [0, v_0]\}
		\cup \mathcal{C}_u
		\cup
		\{x_r + \mathbf{i} v: v \in [0, v_0]\}.
		\]
		To avoid singularities near the real axis, we truncate the contour at a height $n^{-1}\varepsilon_n$, where $\varepsilon_n \downarrow 0$ satisfies $\varepsilon_n \ge n^{-\alpha}$ for some $\alpha \in (0,1]$. 
		Set 
		\[
		\mathcal{C}_l = \{x_l + \mathbf{i} v: v \in [n^{-1}\varepsilon_n, v_0]\}, 
		\qquad 
		\mathcal{C}_r = \{x_r + \mathbf{i} v: v \in [n^{-1}\varepsilon_n, v_0]\},
		\]
		and define $\mathcal{C}_n = \mathcal{C}_l \cup \mathcal{C}_u \cup \mathcal{C}_r$. 
		For $z = x + \mathbf{i} v$, define the truncated process
		\[
		\widehat{M}_n(z) =
		\begin{cases}
			M_n(z), & \text{if } z \in \mathcal{C}_n,\\[3pt]
			M_n(x_l + \mathbf{i}n^{-1}\varepsilon_n), & \text{if } x = x_l,\ v \in [0, n^{-1}\varepsilon_n],\\[3pt]
			M_n(x_r + \mathbf{i} n^{-1}\varepsilon_n), & \text{if } x = x_r,\ v \in [0, n^{-1}\varepsilon_n].
		\end{cases}
		\]
		It is observed that on the subset $\mathcal{C}_n$ of $\mathcal{C}, M_n(\cdot)$ agrees with $\widehat{M}_n(\cdot)$. 
		We now state a CLT for the truncated process $\widehat{M}_n(\cdot)$, which constitutes the key step in proving Theorem~\ref{th1}.
		\begin{lemma}\label{clt_M}
			Under the conditions of Theorem \ref{th1}, $\widehat{M}_n(z)$ forms a tight sequence and converges weakly to a two-dimensional Gaussian process $M(\cdot)$ satisfying for $z \in \mathcal{C} \cup \overline{\mathcal{C}}$ with $\overline{\mathcal{C}}=\{\bar{z}: z \in \mathcal{C}\}$, the mean can be calculated as
			\begin{align}\label{mean_M}
				\mu_n(M(z_1))=\mathscr E_n(z),
			\end{align}
			and for $z_1, z_2 \in \mathcal{C} \cup \overline{\mathcal{C}}$, the covariance function is given by
			\begin{align}\label{cov_M}
				\nu_n\left(M\left(z_1\right), M\left(z_2\right)\right)=\mathscr V_n(z_1,z_2).
			\end{align}
		\end{lemma}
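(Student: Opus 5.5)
We follow the Bai--Silverstein strategy. We split $M_n(z)=M_n^{(1)}(z)+M_n^{(2)}(z)$ into a random part and a deterministic part:
\[
M_n^{(1)}(z)=\sqrt p\,q\,[m_n(z)-\bbe m_n(z)],\qquad M_n^{(2)}(z)=\sqrt p\,q\,[\bbe m_n(z)-m_n^0(z)].
\]
We work on the high-probability event of Lemma~\ref{norm}, replacing $\mathbf Q$ by a version modified off that event so that $\|\mathbf Q\|$ stays bounded on $\mathcal C_n$. The proof then has three parts: finite-dimensional convergence of $M_n^{(1)}$, tightness of $\widehat M_n$, and the asymptotic expansion of $M_n^{(2)}$, which produces $\mathscr E_n$.

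\emph{Random part.} Let $\bbe_j$ denote conditional expectation given $\mathbf y_1,\dots,\mathbf y_j$. We write
\[
M_n^{(1)}(z)=\frac{q}{\sqrt p}\sum_{j=1}^n(\bbe_j-\bbe_{j-1})\Tr\mathbf Q,\qquad
(\bbe_j-\bbe_{j-1})\Tr\mathbf Q=-(\bbe_j-\bbe_{j-1})\frac{d}{dz}\log(1+\mathbf y_j^*\mathbf Q^{(j)}\mathbf y_j).
\]
Expanding the logarithm around $1+\frac1n\Tr\mathbf\Sigma_j\mathbf Q^{(j)}$, the leading martingale difference is $-\frac{d}{dz}\,\bbe_j[\tilde t_j\,\varepsilon_j]$, where
\[
\varepsilon_j=\mathbf y_j^*\mathbf Q^{(j)}\mathbf y_j-\tfrac1n\Tr\mathbf\Sigma_j\mathbf Q^{(j)}.
\]
The remainders are controlled by Lemma~\ref{quad_ineq}: with $\ell=2$ the second moment is $O(1/q^2)$, so after the factor $q/\sqrt p$ and summation over $n$ columns each term contributes $O(1)$, while higher-order remainders gain extra factors of $q^{-1}$.

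We then apply the martingale CLT. The Lindeberg condition follows from Lemma~\ref{quad_ineq} with $\ell=4$: the bound $n^4\|\mathbf A\|^4/q^6$ times $(q/\sqrt p)^4/n^4$ summed over $j$ gives $O(n/(p^2q^2))\to0$. Here the truncation $|w_{ij}|\le\delta_nq$ and the $8+\epsilon$ moments control $\nu'_{8}$.

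The conditional covariance $\sum_j\bbe_{j-1}[\cdot(z_1)\cdot(z_2)]$ is computed with Lemma~\ref{split} applied to $\mathbf A=\bbe_j\mathbf Q^{(j)}(z_1)$ and $\mathbf B=\bbe_j\mathbf Q^{(j)}(z_2)$. This yields two terms:
\begin{itemize}
\item $(\kappa+1)\frac{q^2}{pn^2}\sum_j\tilde t_j\tilde t_j\Tr\bbe_j\mathbf Q^{(j)}\mathbf\Sigma_j\bbe_j\mathbf Q^{(j)}\mathbf\Sigma_j$, where $q^2/n=s$;
\item $\frac{q^2}{pn^2}(\tilde\nu_4/s-\kappa-2)\sum_j\tilde t_j\tilde t_j\Tr(\mathbf Q\circ\mathbf Q\circ\mathbf\Sigma_j^2)$, which produces the $\tilde\nu_4-(\kappa+2)s$ term.
\end{itemize}
In the second term the diagonal entries $[\mathbf Q^{(j)}]_{ii}$ are replaced by their deterministic equivalents $[\mathbf T^{(1)}]_{ii}$, which we prove as a strengthening of Theorem~\ref{first-order}, Step~1, to diagonal entries.

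For the first term, the standard decomposition of $\bbe_j\mathbf Q^{(j)}(z_1)\mathbf\Sigma_l\mathbf Q^{(j)}(z_2)$ (splitting the columns into $i<j$ and $i>j$) gives a recursion whose deterministic limit is exactly the triangular system defining $H_{\ell j}$ with coefficients $a_{\ell i}(z_1,z_2)$. The derivative $\partial^2/\partial z_1\partial z_2$ arises from the $d/dz$ inside the martingale differences.

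\emph{Tightness.} We follow Bai--Silverstein: we verify $\bbe|\widehat M_n^{(1)}(z_1)-\widehat M_n^{(1)}(z_2)|^2/|z_1-z_2|^2\le K$ on $\mathcal C_n$, using the Burkholder inequality together with Lemma~\ref{quad_ineq} and Lemma~\ref{norm} to bound the small-probability event where $\|\mathbf S_n\|$ is large. The segments near the real axis contribute $o(1)$ because $\varepsilon_n\to0$.

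\emph{Mean.} This is the main obstacle. We write
\[
\bbe\tilde q_{jj}^{-1}-\tilde t_j^{-1}=-z\,\tfrac1n\Tr\mathbf\Sigma_j(\bbe\mathbf Q-\mathbf T)+\text{(second-order terms)}.
\]
The second-order terms come from $\bbe[\tilde t_j^2\varepsilon_j^2]$ and the analogous row expansion for $t_i$. By Lemma~\ref{split} these have size $\tilde\nu_4/(sn)\cdot\frac1n\Tr(\mathbf T\circ\mathbf T\circ\mathbf\Sigma_j^2)$ plus $O(1/n)$ terms involving $\kappa U_j$, where $U_j$ arises from $\frac1{n^2}\Tr\mathbf Q\mathbf\Sigma_j\mathbf Q\mathbf\Sigma_j$ via Lemma~\ref{mean-eq}.

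Multiplying by $\sqrt p\,q$, the first group becomes $n\tilde\nu_4/(q\sqrt p)$ and the second becomes $q/\sqrt p$, which gives $\theta_j$. Coupling the column and row equations produces the linear system $\psi_j=\sum_m a_{jm}(z,z)\psi_m+\theta_j$. Its invertibility, i.e. $\|\mathbf A_n(z,z)\|_\infty<1$ uniformly, follows from Assumption~\ref{ass3} as in \cite{Hachem2008}.

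The hard point is showing that all third-order remainders are $o(1)$ after the factor $\sqrt p\,q$. The dangerous term is $\sqrt p\,q\,\bbe|\varepsilon_j|^3\cdot n/n$. By Lemma~\ref{quad_ineq} we have $\bbe|\varepsilon_j|^3\lesssim q^{-3}+n^{-3}\cdot n^3/q^4$, which gives a contribution of order $\sqrt n/q^2$. This is exactly where $\phi>1/4$ is needed, and we would first try to obtain the cancellation via an exact cumulant expansion of $\bbe\varepsilon_j^3$ with $\mathbf Q^{(j)}$ independent of $\mathbf y_j$.
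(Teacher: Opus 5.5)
Your proposal is correct and follows essentially the same route as the paper: you split $M_n$ into a random part $M_{n1}$ and a deterministic part $M_{n2}$, and for $M_{n1}$ you use a martingale CLT whose fourth-order Lindeberg condition comes from Lemma~\ref{quad_ineq} (hence the $8+\epsilon$ moments). You then get the covariance from Lemma~\ref{split} and the triangular $H_{\ell j}$ system, prove Bai--Silverstein-type tightness, and expand the mean to second order to obtain $\theta_j$ and the system $\psi_j=\sum_m a_{jm}(z,z)\psi_m+\theta_j$, with $\phi>1/4$ used to kill the $O(\sqrt n/q^2)$ third-order remainder. Two bookkeeping slips do not affect the argument: the leading martingale difference involves $\beta_j\approx -z\tilde t_j$ rather than $\tilde t_j$, and this factor $-z$ is what produces $[\mathbf T^{(1)}]_{ii}=-z t_i$, since $[\mathbf Q^{(j)}]_{ii}$ itself is approximated by $t_i$; and in your Lindeberg bound the dominant term of Lemma~\ref{quad_ineq} at $\ell=4$ is $(n^2/q^2)^2$, so the summed fourth moments of the martingale differences are $O(n/p^2)$ rather than $O(n/(p^2q^2))$, which still vanishes.
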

		
	Establishing the preceding lemma directly yields the desired CLT for the LSS. 
    Using standard arguments based on Cauchy’s integral representation, one can verify that, with probability one and uniformly for $f \in \{f_1,\ldots,f_k\}$,
	\[
	\left|\oint f(z)\,\big(M_n(z)-\widehat{M}_n(z)\big)\,\mathrm{d}z\right| \rightarrow 0,
	\]
	where the contour integral is taken over $z \in \mathcal{C} \cup \overline{\mathcal{C}}$.
	
	\medskip
	To prove the lemma, we decompose $M_n(z)$ for $z \in \mathcal{C}_n$ as
	\[
	M_n(z)
	= \sqrt{p}\,q\big[m_n(z) - \mathbb{E}m_n(z)\big]
	+ \sqrt{p}\,q\big[\mathbb{E}m_n(z) - m_n^0(z)\big]
	\;\triangleq\;
	M_{n1}(z) + M_{n2}(z).
	\]
	The analysis of the limiting behavior of $M_n(z)$ therefore reduces to studying the two processes $M_{n1}(z)$ and $M_{n2}(z)$.
	The remainder of the proof proceeds in three steps.

\begin{enumerate}
    \item \textbf{Convergence of the stochastic term.}
    We first show that, for every $z \in \mathcal{C}_n$, the process $M_{n1}(z)$ converges in distribution to a centered Gaussian process with covariance function given by \eqref{cov_M} in Lemma \ref{clt_M}. 
    The argument is based on a martingale difference decomposition together with a martingale CLT.
    \item \textbf{Tightness.}
    We then verify the tightness of the collection $\{M_{n1}(z): z \in \mathcal{C}_n\}$, which guarantees functional convergence of the process.
    \item \textbf{Convergence of the deterministic term.}
    Finally, we show that the deterministic term $M_{n2}(z)$ converges uniformly to its deterministic limit specified in~\eqref{mean_M}.
\end{enumerate}
Combining these three steps yields the central limit theorem for $M_n(z)$, and therefore for the LSS. Further technical details are provided in the supplementary material.

\end{appendix}

\begin{acks}[Acknowledgments]
The authors would like to express their sincere gratitude to Zeqin Lin and Shizhe Hong for their valuable assistance and insightful discussions during the development of several proofs.
\end{acks}

\begin{funding}
Dandan Jiang was  supported by NSFC under Grant No. 12571311.
\end{funding}

\begin{supplement}
\stitle{Supplementary Material for ``Phase Transition of Spectral Fluctuations in Large Gram Matrices with a Variance Profile: A Unified Framework for Sparse CLTs''}
\sdescription{This supplementary material collects the proofs of Lemmas~2.5--2.7, 
along with the detailed proof of Theorem~3.3 and the verification of Remark~3.4.}
\end{supplement}


\bibliographystyle{imsart-number} 
\bibliography{myref}       


\end{document}